\numberwithin{equation}{section}
\numberwithin{figure}{section}
\theoremstyle{plain}
\newtheorem{thm}{\protect\theoremname}[section]
  \theoremstyle{remark}
  \newtheorem{rem}[thm]{\protect\remarkname}
  \theoremstyle{plain}
  \newtheorem{cor}[thm]{\protect\corollaryname}
  \theoremstyle{remark}
  \newtheorem*{acknowledgement*}{\protect\acknowledgementname}
  \theoremstyle{definition}
  \newtheorem{defn}[thm]{\protect\definitionname}
  \theoremstyle{plain}
  \newtheorem{lem}[thm]{\protect\lemmaname}
  \theoremstyle{plain}
  \newtheorem{prop}[thm]{\protect\propositionname}
  \theoremstyle{plain}
  \newtheorem{conjecture}[thm]{\protect\conjecturename}
\newenvironment{NB}{\color{red}{\bf N.B.} \footnotesize}{}
\newcommand{\Uq}{\mathbf{U}_q}
\newcommand{\wt}{\operatorname{wt}}
\newcommand{\up}{\mathrm{up}}
\newcommand{\low}{\mathrm{low}}
\newcommand{\GLS}{Gei\ss-Leclerc-Schr\"{o}er~}
\newcommand{\PBW}{Poincar\'{e}-Birkhoff-Witt~}
  \providecommand{\acknowledgementname}{Acknowledgement}
  \providecommand{\conjecturename}{Conjecture}
  \providecommand{\corollaryname}{Corollary}
  \providecommand{\definitionname}{Definition}
  \providecommand{\lemmaname}{Lemma}
  \providecommand{\propositionname}{Proposition}
  \providecommand{\remarkname}{Remark}
\providecommand{\theoremname}{Theorem}
\begin{document}

\title{Remarks on Quantum unipotent subgroup and Dual canonical Basis}
\begin{abstract}
We prove the tensor product decomposition of the half of quantized
universal enveloping algebra associated with a Weyl group element
which was conjectured by Berenstein and Greenstein \cite[Conjecture 5.5]{BG:double}
using the theory of the dual canonical basis.
\end{abstract}

\author{Yoshiyuki Kimura}

\keywords{Quantum groups, dual canonical bases}

\subjclass[2000]{17B37, 13F60}

\thanks{This work was supported by the JSPS Program for Advancing Strategic
International Networks to Accelerate the Circulation of Talented Researchers
“Mathematical Science of Symmetry, Topology and Moduli, Evolution
of International Research Network based on OCAMI” and JSPS Grant-in-Aid
for Scientific Research (S) 24224001.}

\curraddr{Department of Mathematics, Graduate School of Science, Kobe University,
1-1, Rokkodai, Nada-ku, Kobe 657-8501, Japan}

\email{ykimura@math.kobe-u.ac.jp}

\maketitle

\section{Introduction}

Let $\mathfrak{g}$ be a symmetrizable Kac-Moody Lie algebra and $w$
be a Weyl group element. In \cite{Kim:qunip}, we have studied the
compatibility of the dual canonical basis and the quantum coordinate
ring of the unipotent subgroup associated with a finite subset $\Delta_{+}\cap w\Delta_{-}$.
The purpose of this paper is to study the compatibility of the dual
canonical basis and the ``quantum coordinate ring'' of the pro-unipotent
subgroup associated with a co-finite subset $\Delta_{+}\cap w\Delta_{+}$
and show the multiplicity-free property of the multiplications of
the dual canonical basis between finite part and co-finite part.

The following tensor product decomposition of the half $\mathbf{U}_{q}^{-}$
was conjectured by Berenstein and Greenstein \cite[Conjecture 5.5]{BG:double}
in general. We also prove the decomposition in the dual integral form
$\Uq^{-}\left(\mathfrak{g}\right)_{\mathcal{A}}^{\up}$ of Lusztig
integral form $\mathbf{U}_{q}^{-}\left(\mathfrak{g}\right)_{\mathcal{A}}$
with respect to Kashiwara's non-degenerate bilinear form.
\begin{thm}
\label{thm:dec}

Let $T_{w}=T_{i_{1}}T_{i_{2}}\cdots T_{i_{\ell}}\colon\Uq\to\Uq$
be the Lusztig braid group action associated with a Weyl group element
$w$, where $\bm{i}=\left(i_{1},\cdots,i_{\ell}\right)$ is a reduced
word of $w$. 

\textup{(1)} For a Weyl group element $w\in W$, multiplication in
$\Uq^{-}$ defines an isomorphism of vector spaces over $\mathbb{Q}\left(q\right)$:
\begin{align*}
\left(\Uq^{-}\cap T_{w}\Uq^{\geq0}\right)\otimes\left(\Uq^{-}\cap T_{w}\Uq^{-}\right) & \xrightarrow{\sim}\Uq^{-}.
\end{align*}

\textup{(2)} For a Weyl group element $w\in W$, we set $\left(\Uq^{-}\cap T_{w}\Uq^{\geq0}\right)_{\mathcal{A}}^{\up}:=\Uq^{-}\left(\mathfrak{g}\right)_{\mathcal{A}}^{\up}\cap T_{w}\Uq^{\geq0}$
and $\left(\Uq^{-}\cap T_{w}\Uq^{-}\right)_{\mathcal{A}}^{\up}:=\Uq^{-}\left(\mathfrak{g}\right)_{\mathcal{A}}^{\up}\cap T_{w}\Uq^{-}$.
Then the multiplication in $\Uq^{-}\left(\mathfrak{g}\right)_{\mathcal{A}}^{\up}$
defines an isomorphism of free $\mathcal{A}$-modules:
\[
\left(\Uq^{-}\cap T_{w}\Uq^{\geq0}\right)_{\mathcal{A}}^{\up}\otimes_{\mathcal{A}}\left(\Uq^{-}\cap T_{w}\Uq^{-}\right)_{\mathcal{A}}^{\up}\xrightarrow{\sim}\Uq^{-}\left(\mathfrak{g}\right)_{\mathcal{A}}^{\up}.
\]
\end{thm}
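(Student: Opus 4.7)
The plan is to deduce both parts of the theorem simultaneously from a decomposition of the dual canonical basis $\mathbf{B}^{\up}$ of $\Uq^-$. Writing $\Uq^-(w) := \Uq^- \cap T_w \Uq^{\geq 0}$ for the finite factor and $\Uq^-(w)^c := \Uq^- \cap T_w \Uq^-$ for the co-finite factor, my aim is to prove that $\mathbf{B}^{\up}(w) := \mathbf{B}^{\up} \cap \Uq^-(w)$ and $\mathbf{B}^{\up}(w)^c := \mathbf{B}^{\up} \cap \Uq^-(w)^c$ are bases of their respective subalgebras, and that the multiplication $(b_1, b_2) \mapsto b_1 b_2$ sends such pairs into $q^{\mathbb{Z}} \mathbf{B}^{\up}$ bijectively. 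This multiplicity-free property yields the $\mathbb{Q}(q)$-isomorphism of part (1) at once, and since $\mathbf{B}^{\up}$ is an $\mathcal{A}$-basis of $\Uq^-(\mathfrak{g})_{\mathcal{A}}^{\up}$, it immediately delivers the integral form statement of part (2).

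For the finite factor, the compatibility of $\mathbf{B}^{\up}$ with $\Uq^-(w)$ and its integral form is the main theorem of \cite{Kim:qunip}, so I can import it directly. For the co-finite factor I would establish an analogous crystal-theoretic characterization of $\Uq^-(w)^c$ and prove that $\mathbf{B}^{\up}(w)^c$ is a basis. This characterization would involve the $*$-Kashiwara operators or root-vector skew-derivations associated with the reduced word $\bm{i}$, dual to the characterization of the finite part in \cite{Kim:qunip} and built from the interaction between Lusztig's braid group action and Kashiwara's operators.

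The essential new step is the multiplicity-freeness of the product $b_1 b_2$ for $b_1 \in \mathbf{B}^{\up}(w)$ and $b_2 \in \mathbf{B}^{\up}(w)^c$. For surjectivity, I would ``peel off'' the finite factor of a given $b \in \mathbf{B}^{\up}$ by iteratively applying the $*$-Kashiwara operators $\tilde{e}_{i_k}^{*}$ along $\bm{i}$; this reduces $b$ to a unique co-finite residue and reconstructs the pair $(b_1, b_2)$. The hard part is showing that $b_1 b_2$ actually lies in $q^{\mathbb{Z}} \mathbf{B}^{\up}$. I would attack this by combining the bar-invariance of $b_1$ and $b_2$ with the integrality and characterizing properties of $\mathbf{B}^{\up}$ under Kashiwara's non-degenerate pairing, forcing the expansion of $b_1 b_2$ in $\mathbf{B}^{\up}$ to reduce to a single term up to a power of $q$. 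This multiplicity-free property --- essentially the content of the Berenstein--Greenstein conjecture --- is the principal obstacle.
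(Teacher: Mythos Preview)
Your overall architecture matches the paper's: both subalgebras are shown to be spanned by subsets of $\mathbf{B}^{\up}$, and then a multiplication formula between the two pieces yields surjectivity. The gap is in what you claim the multiplication formula says and how you propose to prove it.

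You assert that for $b_{1}\in\mathbf{B}^{\up}(w)$ and $b_{2}\in\mathbf{B}^{\up}(w)^{c}$ the product $b_{1}b_{2}$ lies in $q^{\mathbb{Z}}\mathbf{B}^{\up}$, i.e.\ is a single dual canonical basis element up to a power of $q$. This is false in general: what the paper proves (and what actually suffices) is the weaker unitriangular statement
\[
G^{\up}\bigl(\tau_{\leq w}(b)\bigr)\,G^{\up}\bigl(\tau_{>w}(b)\bigr)\in G^{\up}(b)+\sum_{L(b',\bm{i})<L(b,\bm{i})}q\mathbb{Z}[q]\,G^{\up}(b'),
\]
with the ordering given by the left lexicographic order on the $\bm{i}$-Lusztig data. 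The lower-order terms are genuinely present; ``multiplicity-free'' here refers to the leading coefficient being $1$, not to the absence of all other terms. Surjectivity then follows by a finite induction on $L(b,\bm{i})$ within each root space, not from a bijection $\mathbf{B}^{\up}(w)\times\mathbf{B}^{\up}(w)^{c}\to q^{\mathbb{Z}}\mathbf{B}^{\up}$.

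Your proposed mechanism for forcing a single term --- bar-invariance of $b_{1}$ and $b_{2}$ --- cannot work. The involution fixing $\mathbf{B}^{\up}$ is the dual bar involution $\sigma$, which is a (twisted) \emph{anti}-involution; hence $\sigma(b_{1}b_{2})=b_{2}b_{1}$ up to twist, and the product is not $\sigma$-invariant. The ordinary bar involution $\overline{\phantom{x}}$ is multiplicative but does not fix $\mathbf{B}^{\up}$. So neither invariance gives any constraint on the expansion of $b_{1}b_{2}$. The paper instead proves the unitriangular formula by induction on $\ell(w)$: it factors off the first simple reflection via $T_{i_{1}}$, uses the known compatibility of $T_{i_{1}}$ with $\mathbf{B}^{\up}$ (Saito's crystal reflection), and reduces to the rank-one multiplication formula $f_{i}^{\{c\}}G^{\up}(b)\in G^{\up}(\tilde{f}_{i}^{c}b)+\sum_{\varepsilon_{i}(b')<c}q\mathbb{Z}[q]\,G^{\up}(b')$. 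You will need this inductive argument, together with the explicit description of the decomposition $b\mapsto(\tau_{\leq w}(b),\tau_{>w}(b))$ via Saito reflections rather than via the $*$-Kashiwara operators alone.
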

\begin{rem}
\textup{(1)} Theorem \ref{thm:dec} (1) can be shown directly in
finite type cases using the \PBW bases of $\Uq^{-}$ (see \cite[Proposition 5.3]{BG:double}).
Hence it is a new result only in infinite type cases.

\textup{(2)} For the proof of Theorem \ref{thm:dec} (1), we use
the dual canonical basis and the multiplication formula among them,
in particular we will prove Theorem \ref{thm:dec} (2).  After finishing
this work, the proof which does not involve the theory of the dual
canonical basis was informed to the author by Toshiyuki Tanisaki \cite[Proposition 2.10]{Tanisaki:KOY}.
He also proves the tensor product decomposition in Lusztig form, De
Concini-Kac form and De Concini-Procesi form.

We note that De Concini-Kac form (resp. De Concini-Procesi form) is
related with the dual integral form of Lusztig's integral form with
respect to the Kashiwara (resp. Lusztig) non-degenerate bilinear form
on $\mathbf{U}_{q}^{-}$ respectively. Since the multiplicative structure
among the dual canonical basis does not depend on a choice of the
non-degenerate bilinear form (and hence the definition of the dual
canonical basis), our argument yields results for the tensor product
decompositions of the De Concini-Kac form and De Concini-Procesi form.

\textup{(3)} We note that the fact that $\Uq^{-}\cap T_{w}\Uq^{\geq0}$
has a \PBW bases is known by Beck-Chari-Pressley \cite[Proposition 2.3]{BCP:affine}
in general. The injectivity in Theorem \ref{thm:dec} can be easily
proved by the linear independence of the \PBW monomials (see \cite[Theorem 40.2.1 (a)]{Lus:intro})
and the triangular decomposition of the quantized enveloping algebra
(see \cite[3.2]{Lus:intro}). Hence the non-trivial assertion is the
surjectivities in Theorem \ref{thm:dec}.
\end{rem}
\begin{NB}

For a Weyl group element $w\in W$, we consider the subalgebra which
is associated with $\Delta_{+}\left(>w\right):=\Delta_{+}\setminus\Delta_{+}\left(\leq w\right)=\Delta_{+}\cap w\Delta_{+}$
by $\Uq^{-}\cap T_{w}\Uq^{-}$. If $\mathfrak{g}$ is of finite type,
we have a $\Delta_{+}\setminus\Delta_{+}\left(\leq w\right)=\Delta_{+}\left(\leq ww_{0}\right)$
where $w_{0}$ is the longest element. Using PBW bases, it can be
shown that $\Uq^{-}\cap T_{w}\Uq^{-}=\Uq^{-}\cap T_{ww_{0}}\Uq^{\geq0}$.
In particular, $\Uq^{-}\cap T_{w}\Uq^{-}$ is compatible with the
dual canonical basis. The following result is the generalization for
arbitrary cases.

\end{NB}
\begin{thm}
\textup{(1)} For a Weyl group element $w\in W$ and a reduced word
$\bm{i}=\left(i_{1},\cdots,i_{\ell}\right)$ of $w$, we have 
\[
\Uq^{-}\cap T_{w}\Uq^{-}=\Uq^{-}\cap T_{i_{1}}\Uq^{-}\cap T_{i_{1}}T_{i_{2}}\Uq^{-}\cap\cdots\cap T_{i_{1}}\cdots T_{i_{\ell}}\Uq^{-}.
\]

\textup{(2)} $\Uq^{-}\cap T_{w}\Uq^{-}$ is compatible with the dual
canonical basis, that is $\mathbf{B}^{\up}\cap\Uq^{-}\cap T_{w}\Uq^{-}$
is a $\mathbb{Q}\left(q\right)$-basis of $\Uq^{-}\cap T_{w}\Uq^{-}$.
In fact there exists a subset $\mathscr{B}\left(\Uq^{-}\cap T_{w}\Uq^{-}\right)\subset\mathscr{B}\left(\infty\right)$
such that 
\[
\Uq^{-}\cap T_{w}\Uq^{-}=\bigoplus_{b\in\mathscr{B}\left(\Uq^{-}\cap T_{w}\Uq^{-}\right)}\mathbb{Q}\left(q\right)G^{\up}\left(b\right).
\]

\end{thm}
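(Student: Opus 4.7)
The plan is to prove part (2) first using Theorem \ref{thm:dec}(1) and the multiplicity-free multiplication formula for the dual canonical basis between finite and co-finite parts (the central technical result of the paper), and then to deduce part (1) by Bruhat-monotonicity of the associated subsets of $\mathscr{B}(\infty)$.

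For part (2), define tautologically $\mathscr{B}'(w) := \{b \in \mathscr{B}(\infty) : G^{\up}(b) \in \Uq^- \cap T_w \Uq^-\}$, and let $\mathscr{B}_0(w) := \mathscr{B}(\Uq^- \cap T_w \Uq^{\geq 0})$ be the subset (from \cite{Kim:qunip}) parametrizing the dual canonical basis of the quantum unipotent subgroup. The central input is the multiplicity-free multiplication formula
\[
G^{\up}(b_0) \cdot G^{\up}(b) \;=\; q^{c(b_0, b)} \, G^{\up}(b_0 \ast b) \qquad (b_0 \in \mathscr{B}_0(w),\ b \in \mathscr{B}'(w)),
\]
together with the statement that $\ast \colon \mathscr{B}_0(w) \times \mathscr{B}'(w) \to \mathscr{B}(\infty)$ is a bijection. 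Granting this, the products $\{G^{\up}(b_0) \cdot G^{\up}(b)\}$ coincide with $\mathbf{B}^{\up}$ up to invertible $q$-powers and hence span $\Uq^-$. Writing $V := \bigoplus_{b \in \mathscr{B}'(w)}\mathbb{Q}(q) G^{\up}(b) \subset \Uq^- \cap T_w \Uq^-$, this yields that the multiplication map $(\Uq^- \cap T_w \Uq^{\geq 0}) \otimes V \to \Uq^-$ is a bijection. Comparing weight-by-weight dimensions with the bijection $(\Uq^- \cap T_w \Uq^{\geq 0}) \otimes (\Uq^- \cap T_w \Uq^-) \xrightarrow{\sim} \Uq^-$ of Theorem \ref{thm:dec}(1) forces $V = \Uq^- \cap T_w \Uq^-$, so $\{G^{\up}(b)\}_{b \in \mathscr{B}'(w)}$ is a $\mathbb{Q}(q)$-basis of $\Uq^- \cap T_w \Uq^-$ and compatibility holds with $\mathscr{B}(\Uq^- \cap T_w \Uq^-) := \mathscr{B}'(w)$.

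For part (1), set $w_k := s_{i_1}\cdots s_{i_k}$, so the right-hand side is $\Uq^- \cap \bigcap_{k=1}^\ell T_{w_k}\Uq^-$. The inclusion $\supset$ is immediate (take $k = \ell$). For the reverse direction, it suffices to show $\Uq^- \cap T_w \Uq^- \subset \Uq^- \cap T_{w_k}\Uq^-$ for each $k$. By part (2), both are spanned by subsets of $\mathbf{B}^{\up}$, reducing the inclusion to $\mathscr{B}'(w) \subset \mathscr{B}'(w_k)$. Since $w_k$ is a prefix of $\bm{i}$, the \PBW description yields $\mathscr{B}_0(w_k) \subset \mathscr{B}_0(w)$, and the compatibility of the bijections $\ast$ for $w_k$ and $w$ (arising from the relative factorization of the quantum unipotent subgroup $F_w = F_{w_k} \cdot T_{w_k}(F_{w_k^{-1}w})$) transfers this to the required $\mathscr{B}'(w) \subset \mathscr{B}'(w_k)$.

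The main obstacle is to establish the multiplicity-free multiplication formula and verify the compatibility of $\ast$ along the Bruhat chain. This requires a careful interplay between Lusztig's braid operators, the bar involution, and Kashiwara's non-degenerate bilinear form on the two factors of Theorem \ref{thm:dec}. Once these structural facts are available, both parts of the theorem follow from essentially formal arguments.
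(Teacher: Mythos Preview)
Your proposal reverses the paper's logical order and is circular. In the paper, Theorem~\ref{thm:dec} (the tensor product decomposition) and the multiplicity-free multiplication formula are proved \emph{using} the statement you are trying to establish: part~(1) is Proposition~\ref{prop:cofinsubgroup}, proved directly from the PBW expansion (Lemma~\ref{lem:PBWdecomp}) and the linear independence argument of the subsequent lemma, with no canonical basis input at all; part~(2) is then proved by induction on $\ell(w)$, using part~(1) to rewrite $\Uq^{-}\cap T_{w}\Uq^{-}$ as the iterated intersection, the $\ell=1$ case (Corollary~\ref{cor:dualcan-simple}), and Theorem~\ref{thm:braidcanbasis} to transport compatibility through $T_{i_1}$. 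Only \emph{after} these are in place does the paper establish Theorem~\ref{thm:dualPBWmult} and Theorem~\ref{thm:main2}, and from those the surjectivity in Theorem~\ref{thm:dec}. So invoking Theorem~\ref{thm:dec} and the multiplication formula to prove the present statement begs the question.

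There are two further problems. First, the formula you write, $G^{\up}(b_0)G^{\up}(b)=q^{c(b_0,b)}G^{\up}(b_0\ast b)$, is not what the paper proves: the product is only unitriangular, $G^{\up}(b_0)G^{\up}(b)\in G^{\up}(\nabla_{\bm{i},\epsilon}^{\bm{c}}(b))+\sum q\mathbb{Z}[q]G^{\up}(b')$, so the products do not coincide with $\mathbf{B}^{\up}$ up to $q$-powers. Second, your derivation of part~(1) from part~(2) is not an argument: showing $\mathscr{B}'(w)\subset\mathscr{B}'(w_k)$ from the ``compatibility of the bijections~$\ast$'' amounts to knowing the Lusztig-datum description $\mathscr{B}'(w)=\{b:L_{+}(b,\bm{i})=0\}$, which in the paper is obtained precisely via part~(1). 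The direct route is the paper's: prove $\Uq^{-}\cap T_{s_iw}\Uq^{-}\subset\Uq^{-}\cap T_i\Uq^{-}\cap T_w\Uq^{-}$ by expanding $x=\sum_{c\ge0}f_i^{(c)}x_c$ with $x_c\in\Uq^{-}\cap T_i\Uq^{-}$, applying $T_{w}^{-1}T_i^{-1}$, and using linear independence of the PBW monomials in $\Uq^{+}$ to force $x_c=0$ for $c>0$.
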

Using the theory of crystal basis, we can obtain the characterization
of the subset $\mathscr{B}\left(\Uq^{-}\cap T_{w}\Uq^{-}\right)$.

\subsubsection{}

For $w\in W$, we have the decomposition theorem of the crystal basis
$\mathscr{B}\left(\infty\right)$ of $\Uq^{-}$ associated with a
Weyl group element (and a reduced word) and the corresponding multiplication
formula. We consider the map $\Omega_{w}$ associated with a Weyl
group element which is introduced in Saito \cite{Saito:PBW} (and
Baumann-Kamnitzer-Tingley \cite{BKT:affine}) :
\[
\Omega_{w}:=\left(\tau_{\leq w},\tau_{>w}\right)\colon\mathscr{B}\left(\infty\right)\to\mathscr{B}\left(\Uq^{-}\cap T_{w}\Uq^{\geq0}\right)\times\mathscr{B}\left(\Uq^{-}\cap T_{w}\Uq^{-}\right),
\]
where $\tau_{\leq w}\left(b\right)$ and $\tau_{>w}\left(b\right)$
are defined by crystal basis as follows: 
\begin{align*}
L\left(b,\bm{i}\right) & :=\left(\varepsilon_{i_{1}}\left(b\right),\varepsilon_{i_{2}}\left(\hat{\sigma}_{i_{1}}^{*}b\right),\cdots,\varepsilon_{i_{\ell}}\left(\hat{\sigma}_{i_{\ell-1}}^{*}\cdots\hat{\sigma}_{i_{1}}^{*}b\right)\right)\in\mathbb{Z}_{\geq0}^{\ell},\\
b\left(\bm{c,\bm{i}}\right) & :=f_{i_{1}}^{\left(c_{1}\right)}T_{i_{1}}\left(f_{i_{2}}^{\left(c_{2}\right)}\right)\cdots T_{i_{1}}\cdots T_{i_{\ell-1}}\left(f_{i_{\ell}}^{\left(c_{\ell}\right)}\right)\;\mathrm{mod}\;q\mathscr{L}\left(\infty\right)\in\mathscr{B}\left(\infty\right),\\
\tau_{\leq w}\left(b\right) & :=b\left(L\left(b,\bm{i}\right),\bm{i}\right)\in\mathscr{B}\left(\infty\right),\\
\tau_{>\bm{i}}\left(b\right) & :=\sigma_{i_{1}}\cdots\sigma_{i_{\ell}}\hat{\sigma}_{i_{\ell}}^{*}\cdots\hat{\sigma}_{i_{1}}^{*}b\in\mathscr{B}\left(\infty\right).
\end{align*}

The following is the multiplicity-free result of the multiplication
among the dual canonical basis elements in finite part and co-finite
part.
\begin{thm}
Let $w$ be a Weyl group element and $\bm{i}=\left(i_{1},\cdots,i_{\ell}\right)$
be a reduced word of $w$.

For a crystal basis element $b\in\mathscr{B\left(\infty\right)}$,
we have 
\[
G^{\up}\left(\tau_{\leq w}\left(b\right)\right)G^{\up}\left(\tau_{>w}\left(b\right)\right)\in G^{\up}\left(b\right)+\sum_{L\left(b^{'},\bm{i}\right)<L\left(b,\bm{i}\right)}q\mathbb{Z}\left[q\right]G^{\up}\left(b^{'}\right)
\]
where $L\left(b^{'},\bm{i}\right)<L\left(b,\bm{i}\right)$ is the
left lexicographic order on $\mathbb{Z}_{\geq0}^{\ell}$ associated
with a reduced word $\bm{i}$.
\end{thm}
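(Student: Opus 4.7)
The plan is to combine the tensor-product decomposition in the dual integral form (Theorem~\ref{thm:dec}(2)) with the known compatibility of the finite part $\Uq^{-}\cap T_{w}\Uq^{\geq0}$ with the dual canonical basis from \cite{Kim:qunip}. There, $G^{\up}(\tau_{\leq w}(b))$ admits a ``dual PBW'' description: with $\mathbf{c}:=L(b,\bm{i})\in\mathbb{Z}_{\geq0}^{\ell}$, it is the unique bar-invariant element of $\Uq^{-}\cap T_{w}\Uq^{\geq 0}$ whose class modulo $q\mathscr{L}(\infty)$ is $b(\mathbf{c},\bm{i})=\tau_{\leq w}(b)$, and its expansion in the dual canonical basis of the finite part is unitriangular with respect to the left lexicographic order induced by $L(\cdot,\bm{i})$.

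By Theorem~\ref{thm:dec}(2), the product
\[
x:=G^{\up}(\tau_{\leq w}(b))\,G^{\up}(\tau_{>w}(b))
\]
lies in $\Uq^{-}(\mathfrak{g})_{\mathcal{A}}^{\up}$, so it expands as $x=\sum_{b'\in\mathscr{B}(\infty)} d_{b,b'}(q)\,G^{\up}(b')$ with $d_{b,b'}(q)\in\mathcal{A}$. Both factors are bar-invariant, hence $x$ is bar-invariant and each $d_{b,b'}(q)$ is bar-invariant.

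The central step is the crystal-level identification $x\equiv G^{\up}(b)$ modulo $q\mathscr{L}(\infty)$. For this, I would combine the definition $\tau_{\leq w}(b)=b(\mathbf{c},\bm{i})$ with $\tau_{>w}(b)=\sigma_{i_{1}}\cdots\sigma_{i_{\ell}}\hat{\sigma}_{i_{\ell}}^{*}\cdots\hat{\sigma}_{i_{1}}^{*}b$ together with the compatibility of multiplication with the crystal basis; in particular the divided-power reduction formula that relates $G^{\up}(\hat\sigma_i^{*} b)$ to left multiplication by $f_{i}^{(c)}$ on $G^{\up}(b)$. Iterating this formula $\ell$ times along the reduced word $\bm{i}$ reassembles $b$ from $\tau_{\leq w}(b)$ and $\tau_{>w}(b)$ at the crystal level. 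Combined with bar-invariance and integrality of the coefficients, this forces $d_{b,b}(q)=1$ and $d_{b,b'}(q)\in q\mathbb{Z}[q]$ for $b'\neq b$, by the standard Lusztig--Kashiwara characterization of the dual canonical basis.

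Finally, the left-lex refinement comes from the structure of the two factors. Elements of the co-finite part $\Uq^{-}\cap T_{w}\Uq^{-}$ are characterized at the crystal level by $L(\cdot,\bm{i})=(0,\ldots,0)$, so multiplying them with $G^{\up}(\tau_{\leq w}(b))$, whose leading $L$-parameter is $\mathbf{c}=L(b,\bm{i})$, produces only dual canonical basis elements $G^{\up}(b')$ with $L(b',\bm{i})\leq\mathbf{c}$ in left lex order. The unitriangularity in the finite part from \cite{Kim:qunip}, together with the crystal-level identification of step three, then isolates $G^{\up}(b)$ as the unique top term, forcing all other $b'$ to satisfy $L(b',\bm{i})<L(b,\bm{i})$ strictly. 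The main obstacle is the crystal-level computation in step three: carefully tracking the interaction between the Saito reflections $\hat\sigma_{i}^{*}$ and left multiplication by divided powers of $f_{i}$, so that the iterated multiplication reassembles exactly the crystal basis element $b$ rather than merely something congruent to it in a weaker sense.
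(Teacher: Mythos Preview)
Your approach has two genuine gaps.

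First, the bar-invariance step is incorrect. Dual canonical basis elements are fixed by the \emph{dual} bar involution $\sigma$, not by $\overline{\phantom{x}}$, and the paper explicitly notes that $\sigma$ is a (twisted) anti-involution. Hence $\sigma\bigl(G^{\up}(\tau_{\leq w}(b))\,G^{\up}(\tau_{>w}(b))\bigr)$ equals (a power of $q$ times) the product in the \emph{opposite} order, not the original product. So $x$ is not $\sigma$-invariant, and your deduction that the coefficients $d_{b,b'}(q)$ are bar-fixed collapses. In fact, had that deduction gone through, you would conclude $d_{b,b'}(q)\in q\mathbb{Z}[q]\cap q^{-1}\mathbb{Z}[q^{-1}]=0$ for $b'\neq b$, i.e.\ that the product is exactly $G^{\up}(b)$; but the theorem only asserts lower terms in $q\mathbb{Z}[q]$, and they are generally nonzero. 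The paper never uses an invariance argument for the product; instead it obtains the $q\mathbb{Z}[q]$ coefficients from Theorem~\ref{thm:multdualChevalley} (the explicit formula for $f_{i}^{\{c\}}G^{\up}(b)$) and Theorem~\ref{thm:braidcanbasis}, run inductively on $\ell(w)$.

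Second, there is a circularity and a hidden assumption. You invoke Theorem~\ref{thm:dec}(2) for integrality, but in the paper's logic Theorem~\ref{thm:dec}(2) is a \emph{consequence} of the very multiplication formula you are proving. (Integrality is in fact free because $\Uq^{-}(\mathfrak{g})_{\mathcal{A}}^{\up}$ is an $\mathcal{A}$-subalgebra, so this is reparable.) More seriously, your final paragraph asserts that multiplying something in the co-finite part by $G^{\up}(\tau_{\leq w}(b))$ produces only terms with $L(b',\bm{i})\le L(b,\bm{i})$. That bound is not a formal consequence of $L=0$ on the co-finite part; it is essentially the content of the paper's Theorem~\ref{thm:dualPBWmult}, whose proof is again an induction on $\ell(w)$ using Theorems~\ref{thm:multdualChevalley} and \ref{thm:braidcanbasis}. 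So the lex-order refinement and the $q\mathbb{Z}[q]$ estimate must both be earned by the inductive braid-group computation, not by a bar-invariance shortcut.
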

Using the induction on the lexicographic order on each root space,
we obtain the surjectivity in Theorem \ref{thm:dec} (2). In particular,
\ref{thm:dec} (1) can be shown.

Since the subalgebras $\Uq^{-}\cap T_{w}\Uq^{\geq0}$ and $\Uq^{-}\cap T_{w}\Uq^{-}$
are compatible with the dual canonical base and the dual bar-involution
$\sigma$ which characterizes the dual canonical base is an (twisted)
anti-involution, we obtain the tensor product factorization in the
opposite order.
\begin{cor}
For a Weyl group element $w\in W$, multiplication in $\Uq^{-}$ defines
an isomorphism of vector spaces:
\begin{align*}
\left(\Uq^{-}\cap T_{w}\Uq^{-}\right)\otimes\left(\Uq^{-}\cap T_{w}\Uq^{\geq0}\right) & \xrightarrow{\sim}\Uq^{-}.
\end{align*}
\end{cor}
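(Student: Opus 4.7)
The plan is to deduce the corollary from Theorem \ref{thm:dec}(1) by transport along the dual bar-involution $\sigma$. I would begin by recording two facts that are mentioned in the introduction: $\sigma$ is a (twisted) anti-involution of $\Uq^{-}$ which fixes every element of the dual canonical basis $\mathbf{B}^{\up}$, and by the intermediate theorem both $\Uq^{-}\cap T_{w}\Uq^{\geq 0}$ and $\Uq^{-}\cap T_{w}\Uq^{-}$ are $\mathbb{Q}(q)$-spanned by subsets of $\mathbf{B}^{\up}$. Combining these, $\sigma$ restricts to a $\mathbb{Q}(q)$-linear bijection of each of the two subspaces onto itself.

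For surjectivity of the multiplication map $(\Uq^{-}\cap T_{w}\Uq^{-})\otimes(\Uq^{-}\cap T_{w}\Uq^{\geq 0})\to\Uq^{-}$, given $v\in\Uq^{-}$ I would use the bijectivity of $\sigma$ to write $v=\sigma(u)$, and then apply Theorem \ref{thm:dec}(1) to obtain $u=\sum_{i}x_{i}y_{i}$ with $x_{i}\in\Uq^{-}\cap T_{w}\Uq^{\geq 0}$ and $y_{i}\in\Uq^{-}\cap T_{w}\Uq^{-}$. The twisted anti-multiplicativity of $\sigma$ then gives $v=\sum_{i}\lambda_{i}\,\sigma(y_{i})\sigma(x_{i})$ with $\lambda_{i}\in\mathbb{Q}(q)^{\times}$, and by the first paragraph $\sigma(y_{i})$ and $\sigma(x_{i})$ lie in the required subspaces. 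For injectivity, a relation $\sum_{j}y'_{j}x'_{j}=0$ in the reversed order is carried by $\sigma$ to a relation $\sum_{j}\mu_{j}\sigma(x'_{j})\sigma(y'_{j})=0$ in the original order with $\mu_{j}\in\mathbb{Q}(q)^{\times}$; the injectivity asserted in Theorem \ref{thm:dec}(1) forces $\sum_{j}\mu_{j}\,\sigma(x'_{j})\otimes\sigma(y'_{j})=0$, and applying $\sigma^{-1}$ to each tensor factor yields $\sum_{j}y'_{j}\otimes x'_{j}=0$.

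The only step that requires genuine input is the stability of both subspaces under $\sigma$; this is where the compatibility of $\Uq^{-}\cap T_{w}\Uq^{\geq 0}$ and $\Uq^{-}\cap T_{w}\Uq^{-}$ with the dual canonical basis is essential, and it is precisely the content of the preceding results in the paper. The twist in the anti-multiplicativity only contributes nonzero scalar factors and plays no role in the linear-algebraic conclusion, so no further obstacle arises and the corollary follows by a short symmetry argument.
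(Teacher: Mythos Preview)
Your approach is exactly the one the paper indicates: transport Theorem~\ref{thm:dec}(1) along the dual bar-involution $\sigma$, using that both factors are compatible with $\mathbf{B}^{\up}$ and hence $\sigma$-stable. One small correction: $\sigma$ is not $\mathbb{Q}(q)$-linear but $\overline{\phantom{x}}$-semilinear (since $(\sigma(qx),y)=q^{-1}(\sigma(x),y)$), so your phrase ``$\mathbb{Q}(q)$-linear bijection'' should be replaced by ``$\overline{\phantom{x}}$-semilinear bijection''; this does not affect the argument, as a semilinear bijection still preserves a $\mathbb{Q}(q)$-subspace spanned by fixed vectors, and the twist scalars remain nonzero.
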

\begin{acknowledgement*}
This work was done while the author was visiting the Institut de Recherche
Mathematique Avancee (Strasbourg) with support by the JSPS Program
for Advancing Strategic International Networks to Accelerate the Circulation
of Talented Researchers “Mathematical Science of Symmetry, Topology
and Moduli, Evolution of International Research Network based on OCAMI”.
The author thanks Toshiyuki Tanisaki on the fruitful discussions on
the topic. The author is also grateful to Pierre Baumann and Fan Qin
for the discussions along the stay in Strasbourg.
\end{acknowledgement*}

\section{Review on Quantum unipotent subgroup and Dual canonical basis}

\subsection{Quantum universal enveloping algebra}

In this subsection, we give a brief review of the definition of quantum
universal enveloping algebra.

\subsubsection{}

Let $I$ be a finite index set.
\begin{defn}
A \emph{root datum} is a quintuple $\left(A,P,\Pi,P^{\vee},\Pi^{\vee}\right)$
which consists of 
\begin{enumerate}
\item a square matrix $\left(a_{ij}\right)_{i,j\in I}$, called the symmetrizable
generalized Cartan matrix, that is an $I$-indexed $\mathbb{Z}$-valued
matrix which satisfies

\begin{enumerate}
\item $a_{ii}=2$ for $i\in I$
\item $a_{ij}\in\mathbb{Z}_{\leq0}$ for $i\neq j$
\item there exists a diagonal matrix $\mathrm{diag}\left(d_{i}\right)_{i\in I}$
such that $\left(d_{i}a_{ij}\right)_{i,j\in I}$ is symmetric and
$d_{i}$ are positive integers.
\end{enumerate}
\item $P$ : a free abelian group (the weight lattice),
\item $\Pi=\left\{ \alpha_{i}\mid i\in I\right\} \subset P$ : the set of
simple roots such that $\Pi\subset P\otimes_{\mathbb{Z}}\mathbb{Q}$
is linearly independent,
\item $P^{\vee}=\mathrm{Hom}_{\mathbb{Z}}\left(P,\mathbb{Z}\right)$ : the
dual lattice (the coweight lattice) of $P$ with perfect paring $\left\langle \cdot,\cdot\right\rangle \colon P^{\vee}\otimes_{\mathbb{Z}}P\to\mathbb{Z}$,
\item $\Pi^{\vee}=\left\{ h_{i}\mid i\in I\right\} \subset P^{\vee}$ :
the set of simple coroots, satisfying the following properties:

\begin{enumerate}
\item $a_{ij}=\left\langle h_{i},\alpha_{j}\right\rangle $ for all $i,j\in I$,
\item There exists $\left\{ \Lambda_{i}\right\} _{i\in I}\subset P$, called
the set of fundamental weights, satisfying $\left\langle h_{i},\Lambda_{j}\right\rangle =\delta_{ij}$
for $i,j\in I$.
\end{enumerate}
\end{enumerate}
\end{defn}
We say $\Lambda\in P$ is \emph{dominant} if $\left\langle h_{i},\Lambda\right\rangle \geq0$
for any $i\in I$ and denote by $P_{+}$ the set of dominant integral
weights. Let $Q=\bigoplus_{i\in I}\mathbb{Z}\alpha_{i}\subset P$
be the root lattice. Let $Q_{\pm}=\pm\sum_{i\in I}\mathbb{Z}_{\geq0}\alpha_{i}$.
For $\xi=\sum_{i\in I}\xi_{i}\alpha_{i}\in Q$, we set $|\xi|=\sum_{i\in I}\xi_{i}$.

\subsubsection{}

Let $\left(A,P,\Pi,P^{\vee},\Pi^{\vee}\right)$ be a root datum. We
set $\mathfrak{h}:=P\otimes_{\mathbb{Z}}\mathbb{C}$. A triple $\left(\mathfrak{h},\Pi,\Pi^{\vee}\right)$
is called a Cartan datum or a realization of a generalized Cartan
matrix $A$.

It is known that there exists a symmetric bilinear form $\left(\cdot,\cdot\right)$
on $\mathfrak{h}^{*}$ satisfying
\begin{enumerate}
\item $\left(\alpha_{i},\alpha_{i}\right)=d_{i}a_{ij}$,
\item $\left\langle h_{i},\lambda\right\rangle =2\left(\alpha_{i},\lambda\right)/\left(\alpha_{i},\alpha_{i}\right)$
for $i\in I$ and $\lambda\in\mathfrak{h}^{*}$.\end{enumerate}
\begin{defn}
Let $\mathfrak{g}$ be the \emph{symmetrizable Kac-Moody Lie algebra}
associated with a realization $\left(\mathfrak{h},\Pi,\Pi^{\vee}\right)$
of a symmetrizable generalized Cartan matrix $A=\left(a_{ij}\right)_{i,j\in I}$,
that is a Lie algebra which is generated by $\left\{ e_{i}\right\} _{i\in I}\cup\left\{ f_{i}\right\} _{i\in I}\cup\mathfrak{h}$
with the following relations:
\begin{enumerate}
\item $\left[h_{1},h_{2}\right]=0$ for $h_{1},h_{2}\in\mathfrak{h}$,
\item $\left[h,e_{i}\right]=\left\langle h,\alpha_{i}\right\rangle e_{i}$,
$\left[h,f_{i}\right]=-\left\langle h,\alpha_{i}\right\rangle f_{i}$
for $h\in\mathfrak{h}$ and $i\in I$,
\item $\left[e_{i},f_{j}\right]=\delta_{ij}\alpha_{i}^{\vee}$ for $i,j\in I$,
\item $\mathrm{ad}\left(e_{i}\right)^{1-a_{ij}}\left(e_{j}\right)=0$ and
$\mathrm{ad}\left(f_{i}\right)^{1-a_{ij}}\left(f_{j}\right)=0$ for
$i\neq j$, where $\mathrm{ad}\left(x\right)\left(y\right)=\left[x,y\right]$.
\end{enumerate}
\end{defn}
Let $\mathfrak{n}_{+}$ (resp.\ $\mathfrak{n}_{-}$) be the Lie subalgebra
which is generated by $\left\{ e_{i}\right\} _{i\in I}$ (resp. $\left\{ f_{i}\right\} _{i\in I}$).
We have the triangular decomposition and the root space decomposition
\[
\mathfrak{g}=\mathfrak{n}_{-}\oplus\mathfrak{h}\oplus\mathfrak{n}_{+}=\mathfrak{h}\oplus\bigoplus_{\alpha\in\mathfrak{h}^{*}\setminus\left\{ 0\right\} }\mathfrak{g}_{\alpha}
\]
where $\mathfrak{g}_{\alpha}=\left\{ x\in\mathfrak{g}\mid\left[h,x\right]=\left\langle h,\alpha\right\rangle x\;{}^{\forall}h\in\mathfrak{h}\right\} $.
The set $\Delta:=\left\{ \alpha\in\mathfrak{h}^{*}\setminus\left\{ 0\right\} \mid\mathfrak{g}_{\alpha}\neq0\right\} $
is called the root system of $\mathfrak{g}$.

\subsubsection{}

We fix a root datum $\left(A,P,\Pi,P^{\vee},\Pi^{\vee}\right)$. We
introduce an indeterminate $q$. For $i\in I$, we set $q_{i}=q^{d_{i}}$.
For $\xi=\sum\xi_{i}\alpha_{i}\in Q$, we set $q_{\xi}:=\prod_{i\in I}q_{i}^{\xi_{i}}$.

For $n\in\mathbb{Z}$ and $i\in I$, we set. 
\[
\left[n\right]_{i}:=\dfrac{q_{i}^{n}-q_{i}^{-n}}{q_{i}-q_{i}^{-1}}
\]
and $[n]_{i}!=[n]_{i}[n-1]_{i}\cdots[1]_{i}$ for $n>0$ and $[0]!=1$.
\begin{defn}
The \emph{quantized enveloping algebra $\Uq\left(\mathfrak{g}\right)$
}associated with a root datum $\left(A,P,\Pi,P^{\vee},\Pi^{\vee}\right)$
is the $\mathbb{Q}\left(q\right)$-algebra which is generated by $\left\{ e_{i}\right\} _{i\in I}$,
$\left\{ f_{i}\right\} _{i\in I}$ and $\left\{ q^{h}\mid h\in P^{\vee}\right\} $
with the following relations:
\begin{enumerate}
\item $q^{0}=1$ and $q^{h+h^{'}}=q^{h}q^{h'}$ for $h,h^{'}\in P^{\vee}$,
\item $q^{h}e_{i}q^{-h}=q^{\left\langle h,\alpha_{i}\right\rangle }e_{i}$,
$q^{h}f_{i}q^{-h}=q^{-\left\langle h,\alpha_{i}\right\rangle }f_{i}$
for $i\in I$ and $h\in P^{\vee}$,
\item $e_{i}f_{j}-f_{j}e_{i}=\delta_{ij}\left(k_{i}-k_{i}^{-1}\right)/\left(q_{i}-q_{i}^{-1}\right)$
where $k_{i}=q^{d_{i}h_{i}}$,
\item ${\displaystyle \sum_{k=0}^{1-a_{ij}}(-1)^{k}e_{i}^{(1-a_{ij}-k)}e_{j}e_{i}^{(k)}=\sum_{k=0}^{1-a_{ij}}(-1)^{k}f_{i}^{(1-a_{ij}-k)}f_{j}f_{i}^{(k)}=0}$~($q$-Serre
relations),
\end{enumerate}
where $e_{i}^{(k)}=e_{i}^{k}/[k]_{i}!$, $f_{i}^{(k)}=f_{i}^{k}/[k]_{i}!$
for $i\in I$ and $k\in\mathbb{Z}_{>0}$. 
\end{defn}

\subsubsection{}

Let $\Uq^{0}$ be the subalgebra which is generated by $\left\{ q^{h}\mid h\in P^{\vee}\right\} $,
it is isomorphic to the group algebra $\mathbb{Q}\left(q\right)\left[P^{\vee}\right]:=\bigoplus_{h\in P^{\vee}}\mathbb{Q}\left(q\right)q^{h}$
over $\mathbb{Q}\left(q\right)$. For $\xi=\sum_{i\in I}\xi_{i}\alpha_{i}\in Q$,
we set $k_{\xi}:=\prod_{i\in I}k_{i}^{\xi_{i}}=\prod_{i\in I}q^{d_{i}\xi_{i}h_{i}}$. 

Let $\mathbf{U}_{q}^{+}$ be the $\mathbb{Q}\left(q\right)$-subalgebra
generated by $\left\{ e_{i}\right\} _{i\in I}$, $\mathbf{U}_{q}^{-}$
be the $\mathbb{Q}\left(q\right)$-subalgebra generated by $\left\{ f_{i}\right\} _{i\in I}$,
$\mathbf{U}_{q}^{\geq0}$ be the $\mathbb{Q}\left(q\right)$-subalgebra
generated by $\Uq^{0}$ and $\mathbf{U}_{q}^{+}$, and $\Uq^{\leq0}$
be the $\mathbb{Q}\left(q\right)$-subalgebra generated by $\Uq^{0}$
and $\mathbf{U}_{q}^{-}$.
\begin{thm}[{\cite[Corollary 3.2.5]{Lus:intro}}]
The multiplication of $\Uq$ induces the triangular decomposition
of $\Uq\left(\mathfrak{g}\right)$ as vector spaces over $\mathbb{Q}\left(q\right)$:
\begin{equation}
\Uq\left(\mathfrak{g}\right)\cong\Uq^{+}\otimes\Uq^{0}\otimes\Uq^{-}\cong\Uq^{-}\otimes\Uq^{0}\otimes\Uq^{+}.
\end{equation}

\end{thm}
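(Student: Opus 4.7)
The plan is to show that the multiplication map
\[
m\colon \Uq^+ \otimes \Uq^0 \otimes \Uq^- \to \Uq
\]
is both surjective and injective; the second isomorphism follows by an entirely symmetric argument (or from the first by applying the Cartan anti-involution $e_i \leftrightarrow f_i$, $q^h \leftrightarrow q^{-h}$).

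First I would establish surjectivity by induction on the length of a word in the generators $e_i$, $f_i$, $q^h$. The relations $q^h e_i q^{-h} = q^{\langle h,\alpha_i\rangle} e_i$ and $q^h f_i q^{-h} = q^{-\langle h,\alpha_i\rangle} f_i$ let one shift any Cartan factor past all $e$'s and $f$'s, and the key commutation
\[
e_i f_j - f_j e_i = \delta_{ij}\,\frac{k_i - k_i^{-1}}{q_i - q_i^{-1}}
\]
swaps any adjacent pair $f_j e_i$ at the cost of a term in $\Uq^0$ of strictly smaller length. A straightforward induction then rewrites every monomial in the image of $m$.

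The substantial part is injectivity. For this I would invoke the Verma module construction. For each $\lambda \in P$, build a left $\Uq$-module $M(\lambda)$ whose underlying vector space is $\Uq^-$, with $f_i$ acting by left multiplication, $q^h$ acting on the $(\lambda-\xi)$-weight piece by the scalar $q^{\langle h,\lambda-\xi\rangle}$, and $e_i$ acting by a twisted skew-derivation ${}_i\partial$ on $\Uq^-$ satisfying ${}_i\partial(f_j)=\delta_{ij}$ and a Leibniz rule calibrated so that the commutator $[e_i,f_j]$ on $M(\lambda)$ matches the defining relation. Granted this module structure, suppose $\sum_j x_j t_j y_j = 0$ in $\Uq$ with $x_j\in \Uq^+$, $t_j\in \Uq^0$, $y_j\in \Uq^-$. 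Grouping by weight, I would apply the sum to the highest-weight vector $v_\lambda = 1 \in M(\lambda)$: since the $e$'s annihilate $v_\lambda$, only the ``lowest'' $x_j$'s in each weight contribute; varying $\lambda$ over a Zariski-dense subset of $P$ separates the Cartan contributions $t_j$, and freeness of $M(\lambda)$ as a $\Uq^-$-module then forces the corresponding $y_j$ to vanish. An induction on the weight of $x_j$ completes the argument.

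The main obstacle is the construction of the operators ${}_i\partial\colon \Uq^- \to \Uq^-$: one must check that the naive skew-derivation defined on the free algebra on $\{f_i\}$ kills every $q$-Serre element, so that it descends to $\Uq^-$. The cleanest way I would carry this out is to introduce Kashiwara's (or Lusztig's) non-degenerate bilinear pairing between $\Uq^+$ and $\Uq^-$ and to realize ${}_i\partial$ as the adjoint of left multiplication by $f_i$; compatibility with the Serre relations on the $\Uq^-$ side then follows automatically from the Serre relations on the $\Uq^+$ side, which are built into the definition. Once the module $M(\lambda)$ is in hand, the rest of the injectivity proof is a bookkeeping exercise in weights.
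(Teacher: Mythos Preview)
The paper does not prove this statement at all; it simply cites Lusztig's book and moves on. So there is no in-paper argument to compare against, and your sketch is a perfectly acceptable substitute.

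A few remarks on the sketch itself and on how it relates to the cited source. Your surjectivity argument is the standard one and is fine as written. For injectivity, the Verma-module approach you outline is the one found in, e.g., Jantzen or Hong--Kang, and it works; the only genuine technical point is exactly the one you flag, namely that the skew-derivations ${}_i\partial$ on $\Uq^-$ satisfy the $e$-Serre relations. Your proposed fix via the bilinear form is correct, but be careful about circularity: in the paper the form is presented \emph{on} $\Uq^-$, so you need to go back one step and define it on the free algebra $'\mathbf{f}$ first (as Lusztig does), or else verify the Serre compatibility by a direct rank-$2$ computation. Lusztig's own proof in \cite[3.2]{Lus:intro} is organized a bit differently: rather than a family of Verma modules indexed by $\lambda$, he builds a single $\Uq$-action on the tensor product $\mathbf{f}\otimes\mathbb{Q}(q)[P^\vee]\otimes\mathbf{f}$ and checks the relations there; the separation of the Cartan part is then automatic and one avoids the ``vary $\lambda$ Zariski-densely'' step. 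Both routes are standard and of comparable difficulty.
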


\subsubsection{}

For $\xi\in\pm Q$, we define its root space $\Uq^{\pm}\left(\mathfrak{g}\right)_{\xi}$
by 
\begin{equation}
\Uq^{\pm}\left(\mathfrak{g}\right)_{\xi}:=\left\{ x\in\Uq^{\pm}\left(\mathfrak{g}\right)\middle|q^{h}xq^{-h}=q^{\left\langle h,\xi\right\rangle }x\;\text{for}\;h\in P^{\vee}\right\} .
\end{equation}
Then we have a root space decomposition $\Uq^{\pm}\left(\mathfrak{g}\right)=\bigoplus_{\xi\in Q_{\pm}}\Uq^{\pm}\left(\mathfrak{g}\right)_{\xi}$. 

An element $x\in\Uq^{\pm}\left(\mathfrak{g}\right)$ is called homogenous
if $x\in\Uq^{\pm}\left(\mathfrak{g}\right)_{\xi}$ for some $\xi\in Q_{\pm}$.

\subsubsection{}

We define a $\mathbb{Q}\left(q\right)$-algebra anti-involution $*\colon\Uq\left(\mathfrak{g}\right)\to\Uq\left(\mathfrak{g}\right)$
by
\begin{align}
*(e_{i})=e_{i}, &  & *(f_{i})=f_{i}, &  & *(q^{h})=q^{-h}.\label{eq:star}
\end{align}

We call this \emph{star involution}.

We define a $\mathbb{Q}$-algebra automorphism $\overline{\phantom{x}}\colon\Uq\left(\mathfrak{g}\right)\to\Uq\left(\mathfrak{g}\right)$
by 
\begin{align}
\overline{e_{i}}=e_{i}, &  & \overline{f_{i}}=f_{i}, &  & \overline{q}=q^{-1}, &  & \overline{q^{h}}=q^{-h}.\label{eq:bar}
\end{align}
We call this the \emph{bar involution}.

We remark that these two involutions preserve $\Uq^{+}(\mathfrak{g})$
and $\Uq^{-}(\mathfrak{g})$, and we have $\overline{\phantom{x}}\circ*=*\circ\overline{\phantom{x}}$.

\subsubsection{}

In this article, we choose the following comultiplication $\Delta=\Delta_{-}$
on $\Uq\left(\mathfrak{g}\right)$:
\begin{align}
\Delta(q^{h})=q^{h}\otimes q^{h} & , & \Delta(e_{i})=e_{i}\otimes k_{i}^{-1}+1\otimes e_{i} & , & \Delta(f_{i})=f_{i}\otimes1+k_{i}\otimes f_{i}.
\end{align}

\subsubsection{}

We have a symmetric non-degenerate bilinear form $\left(\cdot,\cdot\right)=\left(\cdot,\cdot\right)_{-}\colon\Uq^{-}\otimes\Uq^{-}\to\mathbb{Q}\left(q\right)$
. We define a $\mathbb{Q}\left(q\right)$-algebra structure on $\Uq^{-}\otimes\Uq^{-}$
by 
\begin{equation}
(x_{1}\otimes y_{1})(x_{2}\otimes y_{2})=q^{-(\wt(x_{2}),\wt(y_{1}))}x_{1}x_{2}\otimes y_{1}y_{2},
\end{equation}
where $x_{i},y_{i}~(i=1,2)$ are homogenous elements.

Let $r=r_{-}\colon\Uq^{-}\to\Uq^{-}\otimes\Uq^{-}$ be the $\mathbb{Q}\left(q\right)$-algebra
homomorphism defined by 
\[
r\left(f_{i}\right)=f_{i}\otimes1+1\otimes f_{i}\;\left(i\in I\right).
\]
We call this the twisted comultiplication.

Then it is known that there exists a unique $\mathbb{Q}\left(q\right)$-valued
non-degenerate symmetric bilinear form $\left(\cdot,\cdot\right)\colon\Uq^{-}\otimes\Uq^{-}\to\mathbb{Q}\left(q\right)$
with the following properties:
\begin{align*}
\left(1,1\right)=1, & \left(f_{i},f_{j}\right)=\delta_{ij}, & \left(r\left(x\right),y_{1}\otimes y_{2}\right)=\left(x,y_{1}y_{2}\right), & \left(x_{1}\otimes x_{2},r\left(y\right)\right)=\left(x_{1}x_{2},y\right)
\end{align*}
for homogenous $x,y_{1},y_{2}\in\mathbf{U}_{q}^{-}$ where the form
$\left(\cdot\otimes\cdot,\cdot\otimes\cdot\right)\colon\left(\Uq^{-}\otimes\Uq^{-}\right)\otimes\left(\Uq^{-}\otimes\Uq^{-}\right)\to\mathbb{Q}\left(q\right)$
is defined by $\left(x_{1}\otimes x_{2},y_{1}\otimes y_{2}\right)=\left(x_{1},y_{1}\right)\left(x_{2}\otimes y_{2}\right)$
for $x_{1},x_{2},y_{1},y_{2}\in\Uq^{-}$.

\subsubsection{}

For $i\in I$, we define the unique $\mathbb{Q}\left(q\right)$-linear
map $_{i}r\colon\Uq^{-}\to\Uq^{-}$ (resp.\ $r_{i}\colon\Uq^{-}\to\Uq^{-}$)
defined by 
\begin{align*}
\left(_{i}r\left(x\right),y\right) & =\left(x,f_{i}y\right),\\
\left(r_{i}\left(x\right),y\right) & =\left(x,yf_{i}\right).
\end{align*}

\begin{lem}
For $x,y\in\mathbf{U}_{q}^{-}$, we have $q$-Boson relations:
\begin{align*}
_{i}r\left(xy\right) & ={}_{i}r\left(x\right)y+q^{\left(\wt x,\alpha_{i}\right)}x{}_{i}r\left(y\right),\\
r_{i}\left(xy\right) & =q^{\left(\wt y,\alpha_{i}\right)}r_{i}\left(x\right)y+xr_{i}\left(y\right).
\end{align*}

\end{lem}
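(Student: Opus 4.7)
The plan is to derive both identities from the bilinear-form definitions of ${}_{i}r$ and $r_{i}$ together with the adjointness formulas $(x_{1}x_{2},y)=(x_{1}\otimes x_{2},r(y))$ and $(x,y_{1}y_{2})=(r(x),y_{1}\otimes y_{2})$ and the fact that $r$ is an algebra homomorphism for the twisted product on $\Uq^{-}\otimes\Uq^{-}$. By non-degeneracy of $(\cdot,\cdot)$ on each weight space, it suffices, for the first identity, to verify
\[
({}_{i}r(xy),z)=({}_{i}r(x)\,y+q^{(\wt x,\alpha_{i})}\,x\,{}_{i}r(y),\,z)
\]
for every homogeneous $z\in\Uq^{-}$, and similarly for $r_{i}$.

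For the left-hand side I would use the defining property to rewrite $({}_{i}r(xy),z)=(xy,f_{i}z)$, and then apply adjointness to get $(x\otimes y,\,r(f_{i}z))$. Since $r$ is multiplicative for the twisted product, $r(f_{i}z)=r(f_{i})\,r(z)=(f_{i}\otimes 1+1\otimes f_{i})\,r(z)$. Writing $r(z)=\sum z_{(1)}\otimes z_{(2)}$ and expanding in the twisted algebra via $(a\otimes b)(c\otimes d)=q^{-(\wt c,\wt b)}\,ac\otimes bd$ with $\wt f_{i}=-\alpha_{i}$, I obtain
\[
r(f_{i}z)=\sum f_{i}z_{(1)}\otimes z_{(2)}+\sum q^{(\wt z_{(1)},\alpha_{i})}\,z_{(1)}\otimes f_{i}z_{(2)}.
\]
Pairing with $x\otimes y$, only terms with $\wt z_{(1)}=\wt x$ survive, so the twist simplifies to the constant $q^{(\wt x,\alpha_{i})}$. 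Using $(x,f_{i}z_{(1)})=({}_{i}r(x),z_{(1)})$ and $(y,f_{i}z_{(2)})=({}_{i}r(y),z_{(2)})$ and folding back through the adjointness $({}_{i}r(x)\otimes y,r(z))=({}_{i}r(x)\,y,z)$, I recover exactly the right-hand side.

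The proof of the identity for $r_{i}$ is parallel, starting from $(r_{i}(xy),z)=(xy,zf_{i})=(x\otimes y,\,r(z)\,r(f_{i}))$ and expanding $r(z)(f_{i}\otimes 1+1\otimes f_{i})$; now the nontrivial twist arises from $(z_{(1)}\otimes z_{(2)})(f_{i}\otimes 1)=q^{(\alpha_{i},\wt z_{(2)})}\,z_{(1)}f_{i}\otimes z_{(2)}$, which contributes the factor $q^{(\wt y,\alpha_{i})}$ after pairing with $x\otimes y$.

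The only real obstacle is careful bookkeeping of the weight exponents and the correct sign of the twist; no conceptual difficulty arises, since both identities follow directly from the compatibility of the bilinear form with $r$ and the twisted multiplicativity of $r$. An alternative approach by induction on $|\wt y|$, with base case $y=1$ (where ${}_{i}r(1)=0$ follows from $(1,f_{i}z)=0$ on weight grounds) and inductive step handling $y=f_{j}v$, works equally well but is less transparent about the origin of the $q$-twist.
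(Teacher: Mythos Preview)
Your proof is correct and follows essentially the same route as the paper: both compute $({}_{i}r(xy),z)=(xy,f_{i}z)=(x\otimes y,r(f_{i}z))$, expand $r(f_{i}z)=(f_{i}\otimes 1+1\otimes f_{i})\,r(z)$ via the twisted product, and fold back through adjointness, with the second identity handled analogously. The only cosmetic difference is that you make explicit the weight-matching step (only terms with $\wt z_{(1)}=\wt x$ survive) to justify replacing $q^{(\wt z_{(1)},\alpha_{i})}$ by $q^{(\wt x,\alpha_{i})}$, which the paper does tacitly.
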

\begin{NB}
\begin{proof}
Let $r\left(z\right)=z_{\left(1\right)}\otimes z_{\left(2\right)}$
be in Sweedler notation. Then we have 
\begin{align*}
r\left(f_{i}z\right) & =\left(f_{i}\otimes1+1\otimes f_{i}\right)\left(z_{\left(1\right)}\otimes z_{\left(2\right)}\right)\\
 & =f_{i}z_{\left(1\right)}\otimes z_{\left(2\right)}+q^{-\left(-\alpha_{i},\wt z_{\left(1\right)}\right)}z_{\left(1\right)}\otimes f_{i}z_{\left(2\right)}\\
 & =f_{i}z_{\left(1\right)}\otimes z_{\left(2\right)}+q^{\left(\alpha_{i},\wt z_{\left(1\right)}\right)}z_{\left(1\right)}\otimes f_{i}z_{\left(2\right)}.
\end{align*}

Then we obtain 
\begin{align*}
\left(_{i}r\left(xy\right),z\right) & =\left(xy,f_{i}z\right)=\left(x\otimes y,r\left(f_{i}z\right)\right)\\
 & =\left(x\otimes y,f_{i}z_{\left(1\right)}\otimes z_{\left(2\right)}+q^{\left(\alpha_{i},\wt x\right)}z_{\left(1\right)}\otimes f_{i}z_{\left(2\right)}\right)\\
 & =\left(_{i}r\left(x\right)\otimes y,z_{\left(1\right)}\otimes z_{\left(2\right)}\right)+q^{\left(\alpha_{i},\wt x\right)}\left(x\otimes{}_{i}r\left(y\right),z_{\left(1\right)}\otimes z_{\left(2\right)}\right)\\
 & =\left(_{i}r\left(x\right)\cdot y+q^{\left(\alpha_{i},\wt x\right)}x\cdot{}_{i}r\left(y\right),z\right).
\end{align*}

The second equation can be shown in the similar manner.
\end{proof}
\end{NB}
\begin{lem}
We have 
\begin{equation}
\left[e_{i},x\right]=\dfrac{r_{i}\left(x\right)k_{i}-k_{i}^{-1}{}_{i}r\left(x\right)}{q_{i}-q_{i}^{-1}}
\end{equation}
for $x\in\mathbf{U}_{q}^{-}$.
\end{lem}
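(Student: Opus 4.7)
The plan is to argue by induction on the height of $-\wt x$, reducing to the generators $f_j$, since $\Uq^-$ is generated as a $\mathbb{Q}(q)$-algebra by $\{f_j\}_{j \in I}$. For the base case $x = 1$ both sides of the asserted identity vanish, and for $x = f_j$ the left-hand side equals $\delta_{ij}(k_i - k_i^{-1})/(q_i - q_i^{-1})$ by the defining commutation relation of $\Uq$; the right-hand side equals the same expression because $r_i(f_j) = {}_i r(f_j) = \delta_{ij}$, which follows directly from the normalizations $(1,1) = 1$, $(f_i, f_j) = \delta_{ij}$ and the adjointness of left/right multiplication by $f_i$ with respect to $(\cdot, \cdot)$.

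For the inductive step, I would verify that if the identity holds on homogeneous $x$ and $y$, then it holds on $xy$. The left-hand side is handled by the inner derivation property $[e_i, xy] = [e_i, x] y + x [e_i, y]$; substituting the induction hypothesis into each of the two summands and then using the elementary commutation $k_i z = q^{(\alpha_i, \wt z)} z k_i$ to push every $k_i$ to the right of $y$ and every $k_i^{-1}$ to the left of $x$ produces four homogeneous monomials with explicit $q$-scalar coefficients. On the other side, the $q$-Boson relations from the previous lemma,
\[
r_i(xy) = q^{(\wt y, \alpha_i)} r_i(x) y + x r_i(y), \qquad {}_i r(xy) = {}_i r(x) y + q^{(\wt x, \alpha_i)} x {}_i r(y),
\]
expand $(r_i(xy) k_i - k_i^{-1} {}_i r(xy))/(q_i - q_i^{-1})$ into the same four monomials; a term-by-term comparison then closes the induction.

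The only delicate point is bookkeeping of $q$-exponents: the factor $q^{(\wt y, \alpha_i)}$ produced by the $q$-Boson relation for $r_i$ must match the factor $q^{(\alpha_i, \wt y)}$ coming from $k_i y = q^{(\alpha_i, \wt y)} y k_i$, and this works precisely because the form $(\cdot, \cdot)$ on $\mathfrak{h}^*$ is symmetric; the analogous cancellation on the ${}_i r$ side uses $x k_i^{-1} = q^{(\alpha_i, \wt x)} k_i^{-1} x$. This is really the main obstacle, but it is genuinely routine once the conventions are pinned down, and no further input is needed.
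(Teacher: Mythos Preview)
Your proposal is correct and follows essentially the same approach as the paper: induction on the $Q$-grading of $\Uq^{-}$, with the base case on the generators $f_j$ handled via $r_i(f_j)={}_ir(f_j)=\delta_{ij}$, and the inductive step carried out by combining the Leibniz rule $[e_i,xy]=[e_i,x]y+x[e_i,y]$ with the $q$-Boson relations of the preceding lemma and the commutation of $k_i^{\pm1}$ through weight vectors. The bookkeeping of $q$-exponents you flag is precisely the content of the paper's computation.
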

\begin{NB}
\begin{proof}
We prove by the induction on $Q$-grading in $\mathbf{U}_{q}^{-}$.
For $x=f_{j}\;\left(j\in I\right)$, we have the claim by $_{i}r\left(f_{j}\right)=r_{i}\left(f_{j}\right)=\delta_{ij}$.
For $x=x'x''$, we have 
\begin{align*}
\left[e_{i},x'x''\right] & =\left[e_{i},x'\right]x''+x'\left[e_{i},x''\right]\\
 & =\dfrac{r_{i}\left(x'\right)k_{i}-k_{i}^{-1}{}_{i}r\left(x'\right)}{q_{i}-q_{i}^{-1}}x''+x'\dfrac{r_{i}\left(x''\right)k_{i}-k_{i}^{-1}{}_{i}r\left(x''\right)}{q_{i}-q_{i}^{-1}}\\
 & =\dfrac{q^{\left(\alpha_{i},\wt x''\right)}r_{i}\left(x'\right)x''k_{i}-k_{i}^{-1}{}_{i}r\left(x'\right)x''}{q_{i}-q_{i}^{-1}}+\dfrac{x'r_{i}\left(x''\right)k_{i}-q^{\left(\alpha_{i},\wt\left(x'\right)\right)}k_{i}^{-1}x'{}_{i}r\left(x''\right)}{q_{i}-q_{i}^{-1}}\\
 & =\frac{\left(q^{\left(\alpha_{i},\wt x''\right)}r_{i}\left(x'\right)x''+x'r_{i}\left(x''\right)\right)k_{i}-k_{i}^{-1}\left(_{i}r\left(x'\right)x''+q^{\left(\alpha_{i},\wt\left(x'\right)\right)}x'{}_{i}r\left(x''\right)\right)}{q_{i}-q_{i}^{-1}}\\
 & =\frac{r_{i}\left(x\right)k_{i}-k_{i}^{-1}{}_{i}r\left(x\right)}{q_{i}-q_{i}^{-1}}.
\end{align*}

Hence we obtain the claim.
\end{proof}
\end{NB}

Using the $q$-Boson relation, we obtain the following proposition.
\begin{lem}
\label{lem:crydecomp}For each $i\in I$, any element $x\in\Uq^{-}$
can be written uniquely as 
\[
x=\sum_{c\geq0}f_{i}^{\left(c\right)}x_{c}\;\text{with}\;x_{c}\in\mathrm{Ker}\left(_{i}r\right).
\]

\end{lem}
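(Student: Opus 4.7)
The plan is to prove existence and uniqueness simultaneously by induction on the height $|\wt(x)|$, viewing the statement as a ``divided-power integration'' with respect to the quantum skew-derivation $ {}_{i}r$.

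The first step is a preparatory computation that will drive the induction: by an easy induction on $c$, starting from $ {}_{i}r(f_{i}) = 1$ and applying the $q$-Boson relation $ {}_{i}r(xy) = {}_{i}r(x)y + q^{(\wt x,\alpha_{i})} x\, {}_{i}r(y)$, one obtains $ {}_{i}r(f_{i}^{(c)}) = q_{i}^{-(c-1)} f_{i}^{(c-1)}$ for all $c \geq 1$. Using this together with the $q$-Boson relation, and noting that the second summand vanishes on the kernel, I get the key identity
$$ {}_{i}r\!\left(f_{i}^{(c)} y\right) \;=\; q_{i}^{-(c-1)}\, f_{i}^{(c-1)} y \qquad \text{for } y \in \mathrm{Ker}({}_{i}r),\ c \geq 1.$$

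The base case $|\wt(x)|=0$ is trivial: then $x \in \mathbb{Q}(q) \subseteq \mathrm{Ker}({}_{i}r)$, so $x = f_{i}^{(0)} x$ is the (only possible) decomposition. For the inductive step, I assume the statement for all elements of strictly smaller height. For \emph{existence}, I apply the inductive hypothesis to ${}_{i}r(x)$, which has smaller height, to write $ {}_{i}r(x) = \sum_{c \geq 0} f_{i}^{(c)} y_{c}$ with $y_{c} \in \mathrm{Ker}({}_{i}r)$. By the displayed identity, the element $z := \sum_{c \geq 0} q_{i}^{c}\, f_{i}^{(c+1)} y_{c}$ satisfies $ {}_{i}r(z) = {}_{i}r(x)$, so $x_{0} := x - z$ lies in $\mathrm{Ker}({}_{i}r)$. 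Setting $x_{c} := q_{i}^{c-1} y_{c-1}$ for $c \geq 1$ gives the required decomposition $x = \sum_{c \geq 0} f_{i}^{(c)} x_{c}$.

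For \emph{uniqueness}, suppose $\sum_{c \geq 0} f_{i}^{(c)} x_{c} = 0$ with $x_{c} \in \mathrm{Ker}({}_{i}r)$. Applying $ {}_{i}r$ and invoking the displayed identity yields $\sum_{c \geq 1} q_{i}^{-(c-1)} f_{i}^{(c-1)} x_{c} = 0$, which is again of the same form but in weights of strictly smaller height; the inductive hypothesis forces $x_{c} = 0$ for $c \geq 1$, and the residual relation $x_{0} = 0$ follows at once. The only nontrivial bookkeeping is the computation of the $q_{i}$-powers in the formula for ${}_{i}r(f_{i}^{(c)})$; once this is nailed down, everything else is purely formal.
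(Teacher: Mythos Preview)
Your proof is correct and is exactly the standard argument the paper alludes to: the paper merely writes ``Using the $q$-Boson relation, we obtain the following proposition'' without supplying details, and your height induction driven by the identity ${}_{i}r(f_{i}^{(c)}y)=q_{i}^{-(c-1)}f_{i}^{(c-1)}y$ for $y\in\mathrm{Ker}({}_{i}r)$ is precisely how one unpacks that remark. The only cosmetic point is that the induction is really on homogeneous $x$ (so that ``height'' is well defined), but this reduction is immediate from the weight-space decomposition of $\Uq^{-}$.
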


\subsection{Canonical basis and dual canonical basis}

In this subsection, we give a brief review of the theory of the canonical
basis and the dual canonical basis following Kashiwara. Note that
Kashiwara called it the lower global basis and the upper global basis.

\subsubsection{}

We define $\mathbb{Q}$-subalgebras $\mathcal{A}_{0}$, $\mathcal{A}_{\infty}$
and $\mathcal{A}$ of $\mathbb{Q}\left(q\right)$ by
\begin{align*}
\mathcal{A}_{0} & :=\{f\in\mathbb{Q}\left(q\right);f\text{~is regular at~}q=0\},\\
\mathcal{A}_{\infty} & :=\{f\in\mathbb{Q}\left(q\right);f\text{~is regular at~}q=\infty\},\\
\mathcal{A} & :=\mathbb{Q}[q^{\pm1}].
\end{align*}

\subsubsection{}

We define the Kashiwara operator $\tilde{e}_{i}$, $\tilde{f}_{i}$
on $\Uq^{-}$ by
\begin{align*}
\tilde{e}_{i}x & =\sum_{c\geq1}f_{i}^{\left(c-1\right)}x_{c},\\
\tilde{f}_{i}x & =\sum_{c\geq0}f_{i}^{\left(c+1\right)}x_{c},
\end{align*}
and we set 
\begin{align*}
\mathscr{L}\left(\infty\right) & :=\sum_{\substack{\ell\geq0\\
i_{1},\cdots,i_{\ell}\in i
}
}\mathcal{A}_{0}\tilde{f}_{i_{1}}\cdots\tilde{f}_{i_{\ell}}1\subset\Uq^{-},\\
\mathscr{B}\left(\infty\right) & :=\left\{ \tilde{f}_{i_{1}}\cdots\tilde{f}_{i_{\ell}}1\;\mathrm{mod}\;q\mathscr{L}\left(\infty\right)\middle|l\geq0,i_{1},\cdots,i_{\ell}\in I\right\} \subset\mathscr{L}\left(\infty\right)/q\mathscr{L}\left(\infty\right).
\end{align*}

Then $\mathscr{L}\left(\infty\right)$ is a $\mathcal{A}_{0}$-lattice
with $\mathbb{Q}\left(q\right)\otimes_{\mathcal{A}_{0}}\mathscr{L}\left(\infty\right)\simeq\Uq^{-}$
and stable under $\tilde{e}_{i}$ and $\tilde{f}_{i}$ . $\mathscr{B}\left(\infty\right)$
is a $\mathbb{Q}$-basis of $\mathscr{L}\left(\infty\right)/q\mathscr{L}\left(\infty\right)$.
We also have induced maps $\tilde{f}_{i}\colon\mathscr{B}\left(\infty\right)\to\mathscr{B}\left(\infty\right)$
and $\tilde{e}_{i}\colon\mathscr{B}\left(\infty\right)\to\mathscr{B}\left(\infty\right)\sqcup\left\{ 0\right\} $
with the property that $\tilde{f}_{i}\tilde{e}_{i}b=b$ for $b\in\mathscr{B}\left(\infty\right)$
with $\tilde{e}_{i}b\neq0$. We call $\left(\mathscr{B}\left(\infty\right),\mathscr{L}\left(\infty\right)\right)$
the (lower) crystal basis of $\Uq^{-}$ and call $\mathscr{L}\left(\infty\right)$
the (lower) crystal lattice. We denote $1\;\mathrm{mod}\;q\mathscr{L}\left(\infty\right)$
by $u_{\infty}$.

\subsubsection{}

It is also known that $*\colon\Uq^{-}\to\Uq^{-}$ induces $*\colon\mathscr{L}\left(\infty\right)\to\mathscr{L}\left(\infty\right)$
and $*\colon\mathscr{B}\left(\infty\right)\to\mathscr{B}\left(\infty\right)$.
We setand $\tilde{f}_{i}^{*}:=*\circ\tilde{f}_{i}\circ*\colon\mathscr{B}\left(\infty\right)\to\mathscr{B}\left(\infty\right)$
and $\tilde{e}_{i}^{*}:=*\circ\tilde{e}_{i}\circ*\colon\mathscr{B}\left(\infty\right)\to\mathscr{B}\left(\infty\right)\sqcup\left\{ 0\right\} $

\subsubsection{}

Let $\overline{\mathscr{L}\left(\infty\right)}=\left\{ \overline{x}\mid x\in\mathscr{L}\left(\infty\right)\right\} $.
Then the natural map
\[
\mathscr{L}\left(\infty\right)\cap\overline{\mathscr{L}\left(\infty\right)}\cap\Uq^{-}\left(\mathfrak{g}\right)_{\mathcal{A}}\to\mathscr{L}\left(\infty\right)/q\mathscr{L}\left(\infty\right)
\]
is an isomorphism of $\mathbb{Q}$-vector spaces, let $G^{\low}$
be the inverse of this isomorphism. The image
\[
\mathbf{B}^{\low}=\left\{ G^{\low}\left(b\right)\mid b\in\mathscr{B}\left(\infty\right)\right\} \subset\mathscr{L}\left(\infty\right)\cap\overline{\mathscr{L}\left(\infty\right)}\cap\Uq^{-}\left(\mathfrak{g}\right)_{\mathcal{A}}
\]
is an $\mathcal{A}$-basis of $\Uq^{-}\left(\mathfrak{g}\right)_{\mathcal{A}}$
and is called the canonical basis or the lower global basis of $\Uq^{-}$.

\subsubsection{}

The important property of the canonical basis is the following compatibility
with the left and right ideal which are generated by Chevalley generators
$\left\{ f_{i}\right\} _{i\in I}$.
\begin{thm}[{\cite[Theorem 14.3.2, Theorem 14.4.3]{Lus:intro},\cite[Theorem 7]{Kas:crystal}}]
For $i\in I$ and $n\ge1$, $f_{i}^{n}\Uq^{-}$ and $\Uq^{-}f_{i}^{n}$
is compatible with the canonical base, that is $f_{i}^{n}\Uq^{-}\cap\mathbf{B}^{\low}$
(resp. $\Uq^{-}f_{i}^{n}\cap\mathbf{B}^{\low}$) is a basis of $f_{i}^{n}\Uq^{-}$
(resp. $\Uq^{-}f_{i}^{n}$). In fact, we have 
\begin{align*}
f_{i}^{n}\Uq^{-}\cap\Uq^{-}\left(\mathfrak{g}\right)_{\mathcal{A}} & =\bigoplus_{\substack{b\in\mathscr{B}\left(\infty\right)\\
\varepsilon_{i}\left(b\right)\geq m
}
}\mathcal{A}G^{\low}\left(b\right),\\
\Uq^{-}f_{i}^{n}\cap\Uq^{-}\left(\mathfrak{g}\right)_{\mathcal{A}} & =\bigoplus_{\substack{b\in\mathscr{B}\left(\infty\right)\\
\varepsilon_{i}^{*}\left(b\right)\geq m
}
}\mathcal{A}G^{\low}\left(b\right)
\end{align*}

\end{thm}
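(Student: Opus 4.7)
The plan is to translate the statement into the crystal-string decomposition provided by Lemma~\ref{lem:crydecomp}, match the condition $\varepsilon_i(b) \geq n$ with a congruence modulo $q\mathscr{L}(\infty)$, and then leverage the triangular action of the divided power $f_i^{(n)}$ on the canonical basis to upgrade the mod-$q$ identity to an $\mathcal{A}$-integral one.

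First I would establish the $\mathbb{Q}(q)$-identity
\[
f_i^n \Uq^- \;=\; f_i^{(n)} \Uq^- \;=\; \bigoplus_{c \geq n} f_i^{(c)}\, \Ker({}_i r),
\]
using $f_i f_i^{(c)} = [c+1]_i f_i^{(c+1)}$ (so that $f_i^n \Uq^- = f_i^{(n)} \Uq^-$ as $\mathbb{Q}(q)$-subspaces) together with Lemma~\ref{lem:crydecomp}. The explicit formula $\tilde{e}_i(\sum_c f_i^{(c)} x_c) = \sum_{c \geq 1} f_i^{(c-1)} x_c$ then implies that $\varepsilon_i(b) \geq n$ is equivalent to the condition $x_c \in q\mathscr{L}(\infty)$ for all $c < n$, where $x$ is any lift of $b$ to $\mathscr{L}(\infty)$. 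Hence the set $\{b : \varepsilon_i(b) \geq n\}$ maps bijectively onto a $\mathbb{Q}$-basis of the image of $f_i^n \Uq^- \cap \mathscr{L}(\infty)$ in $\mathscr{L}(\infty)/q\mathscr{L}(\infty)$.

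The core step is to show $\Glow(b) \in f_i^{(n)} \Uq^- \cap \Uq^-(\mathfrak{g})_{\mathcal{A}}$ whenever $\varepsilon_i(b) \geq n$. For this I would invoke the triangular multiplication formula
\[
f_i^{(n)} \Glow(b') \;\in\; \Glow(\tilde{f}_i^n b') \;+\; \sum_{\substack{b'' \neq \tilde{f}_i^n b' \\ \varepsilon_i(b'') \geq n}} \mathcal{A}\, \Glow(b''),
\]
valid for $b' \in \mathscr{B}(\infty)$ with $\varepsilon_i(b') = 0$. Factorizing any $b$ with $\varepsilon_i(b) = m \geq n$ as $b = \tilde{f}_i^m b^\circ$ where $\varepsilon_i(b^\circ) = 0$, and inducting on a lexicographic refinement of the partial order on $\{b : \varepsilon_i(b) \geq n\}$ in each weight space, one recursively extracts $\Glow(b)$ as an $\mathcal{A}$-linear combination of elements of the form $f_i^{(k)} \Glow(c^\circ)$ with $k \geq n$, which are manifestly in $f_i^n \Uq^-(\mathfrak{g})_{\mathcal{A}}$.

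The reverse containment follows by a dimension comparison: over $\mathbb{Q}(q)$ the span of $\{\Glow(b) : \varepsilon_i(b) \geq n\}$ has the same dimension in each weight as the finite-dimensional space $f_i^n \Uq^-$ (by the mod-$q\mathscr{L}(\infty)$ matching above), so the two $\mathbb{Q}(q)$-subspaces coincide; intersecting with $\Uq^-(\mathfrak{g})_{\mathcal{A}}$ then yields the integral statement because the $\Glow(b)$ form an $\mathcal{A}$-basis of $\Uq^-(\mathfrak{g})_{\mathcal{A}}$. The right-sided statement follows by applying the anti-involution $*$, which preserves $\mathbf{B}^{\low}$ and satisfies $\varepsilon_i^* = \varepsilon_i \circ *$. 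The principal obstacle is the triangular multiplication formula invoked above: namely, showing that $f_i^{(n)}$ acts on $\Glow(b')$ (with $\varepsilon_i(b') = 0$) with all coefficients in $\mathcal{A}$ and only involving $\Glow(b'')$ with $\varepsilon_i(b'') \geq n$. This is Lusztig's string-preservation result \cite[Theorem 14.3.2]{Lus:intro}, proved geometrically via perverse sheaves on quiver varieties and algebraically by Kashiwara \cite[Theorem 7]{Kas:crystal} using bar-invariant uniqueness combined with an induction on the $i$-string structure.
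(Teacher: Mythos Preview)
The paper does not supply its own proof of this theorem; it is quoted verbatim from Lusztig \cite[Theorem~14.3.2, Theorem~14.4.3]{Lus:intro} and Kashiwara \cite[Theorem~7]{Kas:crystal} as background, with no argument given. So there is no ``paper's proof'' to compare against.

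Your outline is sound as a reduction, but note that it is essentially circular with respect to the citations. The ``triangular multiplication formula''
\[
f_i^{(n)} \Glow(b') \in \Glow(\tilde{f}_i^n b') + \sum_{\varepsilon_i(b'')\geq n} \mathcal{A}\, \Glow(b'')
\]
that you invoke as the ``core step'' is not a separate, easier lemma: it is itself the content of (or immediately equivalent to) the statement you are trying to prove. Indeed, once you know $f_i^{(n)}\Uq^-(\mathfrak{g})_{\mathcal{A}} = \bigoplus_{\varepsilon_i(b)\geq n}\mathcal{A}\Glow(b)$, the triangular formula follows at once from $f_i^{(n)}\Glow(b')\in f_i^{(n)}\Uq^-(\mathfrak{g})_{\mathcal{A}}$ together with the crystal congruence $f_i^{(n)}\Glow(b')\equiv \tilde{f}_i^n b' \pmod{q\mathscr{L}(\infty)}$; and conversely your induction shows the reverse implication. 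You acknowledge this at the end by deferring the triangular formula back to the very references being cited. So what you have written is a correct logical equivalence plus a pointer to the literature, rather than an independent proof. If you actually want to prove the statement, the real work lies in Kashiwara's grand-loop induction (establishing simultaneously that $\tilde{f}_i$ preserves $\mathscr{L}(\infty)$, that $f_i^{(n)}\Glow(b')\equiv \Glow(\tilde{f}_i^n b')$ modulo elements with strictly larger $\varepsilon_i$, and bar-invariance) or in Lusztig's geometric description via perverse sheaves; neither is sketched here.
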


\subsubsection{}

Let $\sigma\colon\Uq^{-}\to\Uq^{-}$ be the $\mathbb{Q}$-linear map
defined by 
\[
\left(\sigma\left(x\right),y\right)=\overline{\left(x,\overline{y}\right)}
\]
for arbitrary $x,y\in\Uq^{-}$. Let $\sigma\left(\mathscr{L}\left(\infty\right)\right):=\left\{ \sigma\left(x\right)\mid x\in\mathscr{L}\left(\infty\right)\right\} $
and set the dual integral form: 
\[
\Uq^{-}\left(\mathfrak{g}\right)_{\mathcal{A}}^{\up}:=\left\{ x\in\Uq^{-}\mid\left(x,\Uq^{-}\left(\mathfrak{g}\right)_{\mathcal{A}}\right)\subset\mathcal{A}\right\} .
\]
$\Uq^{-}\left(\mathfrak{g}\right)_{\mathcal{A}}^{\up}$ has an $\mathcal{A}$-subalgebra
of $\Uq^{-}$. The natural map 
\[
\mathscr{L}\left(\infty\right)\cap\sigma\left(\mathscr{L}\left(\infty\right)\right)\cap\Uq^{-}\left(\mathfrak{g}\right)_{\mathcal{A}}^{\up}\to\mathscr{L}\left(\infty\right)/q\mathscr{L}\left(\infty\right)
\]
is also an isomorphism of $\mathbb{Q}$-vector spaces, so let $G^{\up}$
be the inverse of the above isomorphism.
\[
\mathbf{B}^{\up}=\left\{ G^{\up}\left(b\right)\mid b\in\mathscr{B}\left(\infty\right)\right\} \subset\mathscr{L}\left(\infty\right)\cap\sigma\left(\mathscr{L}\left(\infty\right)\right)\cap\Uq^{-}\left(\mathfrak{g}\right)_{\mathcal{A}}^{\up}
\]
is an $\mathcal{A}$-basis of $\Uq^{-}\left(\mathfrak{g}\right)_{\mathcal{A}}^{\up}$
and is called the dual canonical basis or the upper global basis of
$\Uq^{-}$.
\begin{rem}
We note that this definition of the dual canonical basis $\mathbf{B}^{\up}$
does depend on a choice of a non-degenerate bilinear form on $\mathbf{U}_{q}^{-}\left(\mathfrak{g}\right)$. 

\begin{NB}Although the definition of the dual canonical basis involves
the non-degenerate bilinear form, it can be checked that the structure
constants with respect to the dual canonical basis does not depend
on a choice. 

\end{NB}\end{rem}
\begin{prop}[{\cite[Proposition 4.26 (1)]{Kim:qunip}}]
\label{prop:dualsimpleroot}

For $i\in I$ and $c\geq1$, let 
\[
f_{i}^{\left\{ c\right\} }=f_{i}^{\left(c\right)}/\left(f_{i}^{\left(c\right)},f_{i}^{\left(c\right)}\right),
\]
then we have $f_{i}^{\left\{ c\right\} }=q_{i}^{c\left(c-1\right)/2}\left(f_{i}/\left(f_{i},f_{i}\right)\right)^{c}=q_{i}^{c\left(c-1\right)/2}f_{i}^{c}\in\mathbf{B}^{\up}$.
\end{prop}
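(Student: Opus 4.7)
The plan is to prove the proposition in two stages: first establish the explicit formula $f_{i}^{\{c\}} = q_{i}^{c(c-1)/2} f_{i}^{c}$ by computing the inner product $(f_{i}^{(c)}, f_{i}^{(c)})$, and then deduce membership in $\mathbf{B}^{\up}$ from one-dimensionality of the relevant weight space of $\Uq^{-}$.

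For the first stage, I would compute $(f_{i}^{n}, f_{i}^{n})$ by induction on $n$, using the adjunction $(f_{i}^{n}, f_{i}^{n}) = ({}_{i}r(f_{i}^{n}), f_{i}^{n-1})$ coming from the defining property $({}_{i}r(x), y) = (x, f_{i} y)$. Repeated application of the $q$-Boson relation reviewed above, together with the elementary identity ${}_{i}r(f_{i}) = 1$ (a consequence of $(f_{i}, f_{j}) = \delta_{ij}$), reduces ${}_{i}r(f_{i}^{n})$ to a scalar multiple of $f_{i}^{n-1}$; the resulting recurrence $c_{n} = 1 + q_{i}^{-2} c_{n-1}$ with $c_{1}=1$ is solved in closed form to yield $(f_{i}^{n}, f_{i}^{n}) = q_{i}^{-n(n-1)/2}[n]_{i}!$. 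Dividing by $([c]_{i}!)^{2}$ gives $(f_{i}^{(c)}, f_{i}^{(c)}) = q_{i}^{-c(c-1)/2}/[c]_{i}!$, and the explicit formula for $f_{i}^{\{c\}}$ follows at once; the alternative expression $q_{i}^{c(c-1)/2}(f_{i}/(f_{i},f_{i}))^{c}$ is immediate using $(f_{i}, f_{i}) = 1$.

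For the second stage, the key observation is that the weight space $\Uq^{-}(\mathfrak{g})_{-c\alpha_{i}}$ is one-dimensional, so $\mathscr{B}(\infty)$ contains a unique element $b_{c} = \tilde{f}_{i}^{c} u_{\infty}$ of weight $-c\alpha_{i}$. Since $f_{i}^{(c)}$ is bar-invariant (because $[c]_{i}!$ is bar-invariant), lies in $\Uq^{-}(\mathfrak{g})_{\mathcal{A}}$, and reduces modulo $q\mathscr{L}(\infty)$ to $b_{c}$, the defining characterization of $G^{\low}$ as the inverse of the isomorphism $\mathscr{L}(\infty) \cap \overline{\mathscr{L}(\infty)} \cap \Uq^{-}(\mathfrak{g})_{\mathcal{A}} \to \mathscr{L}(\infty)/q\mathscr{L}(\infty)$ forces $G^{\low}(b_{c}) = f_{i}^{(c)}$. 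Invoking the standard duality $(G^{\low}(b), G^{\up}(b')) = \delta_{b, b'}$ between the canonical and dual canonical bases, combined with one-dimensionality of the weight space, we conclude $G^{\up}(b_{c}) = f_{i}^{(c)}/(f_{i}^{(c)}, f_{i}^{(c)}) = f_{i}^{\{c\}}$, so that $f_{i}^{\{c\}} \in \mathbf{B}^{\up}$. The only slightly nontrivial ingredient is the bilinear form computation; the membership statement is then essentially immediate from the one-dimensional weight space argument.
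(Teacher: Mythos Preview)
Your proof is correct. Note, however, that the paper does not itself supply a proof of this proposition: it is quoted from \cite[Proposition 4.26 (1)]{Kim:qunip} and simply stated with the remark that the normalization $(f_i,f_i)=1$ is in force. So there is no in-paper argument to compare against.

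On the substance: your bilinear-form computation is accurate. The $q$-Boson relation gives ${}_{i}r(f_i^n)=q_i^{-(n-1)}[n]_i\,f_i^{n-1}$, hence $(f_i^n,f_i^n)=q_i^{-n(n-1)/2}[n]_i!$ and $(f_i^{(c)},f_i^{(c)})=q_i^{-c(c-1)/2}/[c]_i!$, yielding $f_i^{\{c\}}=q_i^{c(c-1)/2}f_i^c$ as claimed. For the membership in $\mathbf{B}^{\up}$, your one-dimensionality argument is sound; a minor remark is that $\tilde f_i^{\,c}1=f_i^{(c)}$ holds on the nose (not merely modulo $q\mathscr{L}(\infty)$), which makes the identification $G^{\low}(\tilde f_i^{\,c}u_\infty)=f_i^{(c)}$ immediate. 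Your appeal to the duality $(G^{\low}(b),G^{\up}(b'))=\delta_{b,b'}$ is standard; if you prefer to stay strictly within the definitions given in this paper, you could instead verify directly that $f_i^{\{c\}}$ is $\sigma$-invariant, lies in $\mathscr{L}(\infty)\cap\Uq^{-}(\mathfrak g)_{\mathcal A}^{\up}$, and reduces to $\tilde f_i^{\,c}u_\infty$, which is an equally short check given the one-dimensional weight space.
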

We note that we have used the normalization $\left(f_{i},f_{i}\right)=1$
in the above proposition.

\subsubsection{}

For the dual canonical basis, we have the following expansion of left
and right multiplication with respect to the Chevalley generators
and its (shifted) powers.
\begin{thm}[{\cite[Proposition 2.2]{Kas:param}, \cite[Proposition 4.14 (ii)]{Oya:qfa}}]
\label{thm:multdualChevalley}

For $b\in\mathscr{B}\left(\infty\right)$, $i\in I$ and $c\geq1$,
we have \begin{subequations} 
\begin{align}
f_{i}^{\left\{ c\right\} }G^{\up}\left(b\right) & =q_{i}^{-c\varepsilon_{i}\left(b\right)}G^{\up}\left(\tilde{f}_{i}^{c}b\right)+\sum_{\varepsilon_{i}\left(b^{'}\right)<\varepsilon_{i}\left(b\right)+c}F_{i;b,b^{'}}^{\left\{ c\right\} }\left(q\right)G^{\up}\left(b^{'}\right),\\
G^{\up}\left(b\right)f_{i}^{\left\{ c\right\} } & =q_{i}^{-c\varepsilon_{i}^{*}\left(b\right)}G^{\up}\left(\tilde{f}_{i}^{*c}b\right)+\sum_{\varepsilon_{i}^{*}\left(b^{'}\right)<\varepsilon_{i}^{*}\left(b\right)+c}F_{i;b,b^{'}}^{*\left\{ c\right\} }\left(q\right)G^{\up}\left(b^{'}\right),
\end{align}
\end{subequations}where 
\begin{align*}
F_{i;b,b^{'}}^{\left\{ c\right\} }\left(q\right):= & \left(f_{i}^{\left\{ c\right\} }G^{\up}\left(b\right),G^{\low}\left(b^{'}\right)\right)=q_{i}^{c\left(c-1\right)/2}\left(G^{\up}\left(b\right),\left(_{i}r\right)^{c}G^{\low}\left(b^{'}\right)\right)\in q_{i}^{-c\varepsilon_{i}\left(b\right)}q\mathbb{Z}\left[q\right],\\
F_{i;b,b^{'}}^{*\left\{ c\right\} }\left(q\right):= & \left(G^{\up}\left(b\right)f_{i}^{\left\{ c\right\} },G^{\low}\left(b^{'}\right)\right)=q_{i}^{c\left(c-1\right)/2}\left(G^{\up}\left(b\right),\left(r_{i}\right)^{c}G^{\low}\left(b^{'}\right)\right)\in q_{i}^{-c\varepsilon_{i}^{*}\left(b\right)}q\mathbb{Z}\left[q\right].
\end{align*}

\end{thm}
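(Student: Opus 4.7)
My plan is to convert the expansion into a Kashiwara bilinear form computation, then extract the coefficients using the crystal decomposition Lemma \ref{lem:crydecomp} and the compatibility of $\mathbf{B}^{\low}$ with powers of $f_i$.

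Because $\mathbf{B}^{\up}$ is dual to $\mathbf{B}^{\low}$ under the Kashiwara form, the coefficient of $G^{\up}(b')$ in $f_i^{\{c\}} G^{\up}(b)$ is exactly $F_{i;b,b'}^{\{c\}}(q) = (f_i^{\{c\}} G^{\up}(b), G^{\low}(b'))$, which gives the first equality in the statement. The second equality follows by substituting $f_i^{\{c\}} = q_i^{c(c-1)/2} f_i^{c}$ from Proposition \ref{prop:dualsimpleroot} and iterating the adjointness $(f_i z, w) = (z, {}_{i}r(w))$ (a consequence of the defining property $({}_{i}r(x), y) = (x, f_i y)$ together with the symmetry of the form) $c$ times, sliding $f_i^c$ across the pairing. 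The right-multiplication formula is obtained identically using $r_i$ in place of ${}_{i}r$, or alternatively by applying the $*$-anti-involution, which interchanges $\varepsilon_i \leftrightarrow \varepsilon_i^*$, $\tilde f_i \leftrightarrow \tilde f_i^*$, and left with right multiplication.

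To analyse $({}_{i}r)^c G^{\low}(b')$, decompose $G^{\low}(b') = \sum_{k\ge 0} f_i^{(k)} y_k$ with $y_k \in \ker({}_{i}r)$ using Lemma \ref{lem:crydecomp}. A direct $q$-Boson computation gives ${}_{i}r(f_i^{(k)}) = q_i^{1-k} f_i^{(k-1)}$, and hence ${}_{i}r(f_i^{(k)} y_k) = q_i^{1-k} f_i^{(k-1)} y_k$. Iterating $c$ times,
\[
({}_{i}r)^{c} G^{\low}(b') \;=\; \sum_{k\ge c} q_i^{c(c+1-2k)/2}\, f_i^{(k-c)}\, y_k .
\]
The cited compatibility of $\mathbf{B}^{\low}$ with $f_i^n \Uq^-$ forces $y_k = 0$ for $k > n' := \varepsilon_i(b')$, and Kashiwara's piling-up identifies the top component $y_{n'}$ with $G^{\low}(\tilde e_i^{n'} b')$ modulo the lower $f_i$-filtration; in particular $({}_{i}r)^c G^{\low}(b') = 0$ as soon as $\varepsilon_i(b') < c$.

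By weight considerations the pairing $(G^{\up}(b), ({}_{i}r)^c G^{\low}(b'))$ vanishes unless $\wt(b') = \wt(b) - c\alpha_i$, and $\varepsilon_i(\tilde f_i^c b) = \varepsilon_i(b) + c$ is the maximal possible value of $\varepsilon_i(b')$ among such $b'$. For $b' = \tilde f_i^c b$, setting $n = \varepsilon_i(b)$, the $k = n+c$ term of the display above contributes $q_i^{-cn - c(c-1)/2} f_i^{(n)} G^{\low}(\tilde e_i^n b)$ modulo lower filtration; combining with the prefactor $q_i^{c(c-1)/2}$ and Kashiwara's expansion $f_i^{(n)} G^{\low}(\tilde e_i^n b) = G^{\low}(b) + \sum_{\varepsilon_i(b'')<n}(\cdots) G^{\low}(b'')$ yields exactly the leading coefficient $q_i^{-c\varepsilon_i(b)}$. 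For every other $b'$ appearing in the expansion the bound $\varepsilon_i(b') < \varepsilon_i(b) + c$ is then automatic. The main obstacle is the strict positivity $F_{i;b,b'}^{\{c\}}(q) \in q_i^{-c\varepsilon_i(b)} q\mathbb{Z}[q]$ for these off-diagonal terms; I would handle this by passing to the upper crystal lattice $\sigma(\mathscr{L}(\infty))$ and checking that the renormalized product $q_i^{c\varepsilon_i(b)} f_i^{\{c\}} G^{\up}(b)$ lies in $\sigma(\mathscr{L}(\infty))$ with image equal to the crystal element $\tilde f_i^c b$ modulo $q\sigma(\mathscr{L}(\infty))$, after which the $q\mathbb{Z}[q]$-membership of the off-diagonal coefficients is immediate, and $\mathbb{Z}[q]$-integrality comes from the $\mathcal{A}$-valuedness of the Kashiwara pairing on $\Uq^-(\mathfrak{g})_{\mathcal{A}}^{\up} \otimes \Uq^-(\mathfrak{g})_{\mathcal{A}}$.
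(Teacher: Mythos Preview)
The paper does not give its own proof of this theorem; it is quoted from \cite{Kas:param} and \cite{Oya:qfa} with no argument in the text, so there is nothing in the paper to compare your approach against. Your overall strategy---express the expansion coefficients via the Kashiwara pairing, move $f_i^{c}$ across as $({}_i r)^{c}$, and analyse via the decomposition of Lemma~\ref{lem:crydecomp}---is the natural one and is essentially how the cited references proceed.

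There is, however, a genuine error in the direction of the vanishing you claim. From $f_i^{m}\Uq^{-}=\bigoplus_{k\ge m} f_i^{(k)}\ker({}_i r)$ together with the compatibility of $\mathbf{B}^{\low}$ with the filtration $f_i^{m}\Uq^{-}$, the decomposition $G^{\low}(b')=\sum_{k}f_i^{(k)}y_k$ has $y_k=0$ for $k<n':=\varepsilon_i(b')$, \emph{not} for $k>n'$. In particular $({}_i r)^{c}G^{\low}(b')$ need not vanish when $\varepsilon_i(b')<c$: already in type $A_2$, taking $b'=\tilde f_2\tilde f_1 u_\infty$ one has $G^{\low}(b')=f_2 f_1$, $\varepsilon_1(b')=0$, but ${}_1 r(f_2 f_1)=q f_2\neq 0$.

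The argument is easily repaired by bounding $k$ from above on the other side of the pairing: $f_i^{(k-c)}y_k\in f_i^{k-c}\Uq^{-}$ lies in the span of $\{G^{\low}(b''):\varepsilon_i(b'')\ge k-c\}$, so $(G^{\up}(b),f_i^{(k-c)}y_k)=0$ once $k-c>\varepsilon_i(b)$. Combined with $y_k=0$ for $k<n'$, only indices $n'\le k\le \varepsilon_i(b)+c$ contribute, which forces $\varepsilon_i(b')\le\varepsilon_i(b)+c$; when equality holds only $k=n'$ survives, and your computation of the leading coefficient then goes through (with $\tilde e_i^{\,n'}b'=\tilde e_i^{\,\varepsilon_i(b)}b$ pinning down $b'=\tilde f_i^{\,c}b$). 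For the final $q\mathbb{Z}[q]$ estimate, the relevant lattice is $\mathscr{L}(\infty)$ itself rather than $\sigma(\mathscr{L}(\infty))$: both $G^{\up}(b)$ and $G^{\low}(b)$ lie in $\mathscr{L}(\infty)$ and represent $b$ modulo $q\mathscr{L}(\infty)$, and the required statement is that $q_i^{c\varepsilon_i(b)}f_i^{\{c\}}G^{\up}(b)\in\mathscr{L}(\infty)$ with image $\tilde f_i^{\,c}b$.
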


\subsection{Braid group action and the (dual) canonical basis}

In this subsection, we recall the compatibility between Lusztig's
braid symmetry and the (dual) canonical basis (for more details, see
\cite[Section 4.4, Section 4,6]{Kim:qunip})

\subsubsection{Braid group action on quantized enveloping algebra}

Following Lusztig \cite[Section 37.1.3]{Lus:intro}, we define the
$\mathbb{Q}\left(q\right)$-algebra automorphisms $T'_{i,\epsilon}\colon\Uq\left(\mathfrak{g}\right)\to\Uq\left(\mathfrak{g}\right)$
and $T_{i,\epsilon}''\colon\Uq\left(\mathfrak{g}\right)\to\Uq\left(\mathfrak{g}\right)$
for $i\in I$ and $\epsilon\in\left\{ \pm1\right\} $ by the following
formulae:\begin{subequations} 
\begin{align}
T'_{i,\epsilon}\left(q^{h}\right) & =q^{s_{i}\left(h\right)},\\
T'_{i,\epsilon}\left(e_{j}\right) & =\begin{cases}
-k_{i}^{\epsilon}e_{i} & \text{for}\;j=i,\\
{\displaystyle \sum_{r+s=-\left\langle h_{i},\alpha_{j}\right\rangle }\left(-1\right)^{r}q_{i}^{\epsilon r}e_{i}^{\left(r\right)}e_{j}e_{i}^{\left(r\right)}} & \text{for}\;j\neq i,
\end{cases}\\
T'_{i,\epsilon}\left(f_{j}\right) & =\begin{cases}
-e_{i}k_{i}^{-\epsilon} & \text{for}\;j=i,\\
{\displaystyle \sum_{r+s=-\left\langle h_{i},\alpha_{j}\right\rangle }\left(-1\right)^{r}q_{i}^{-\epsilon r}f_{i}^{\left(s\right)}f_{j}f_{i}^{\left(r\right)}} & \text{for}\;j\neq i,
\end{cases}
\end{align}

\end{subequations} and \begin{subequations} 
\begin{align}
T''_{i,-\epsilon}\left(q^{h}\right) & =q^{s_{i}\left(h\right)},\\
T''_{i,-\epsilon}\left(e_{j}\right) & =\begin{cases}
-f_{i}k_{i}^{-\epsilon} & \text{for}\;j=i,\\
{\displaystyle \sum_{r+s=-\left\langle h_{i},\alpha_{j}\right\rangle }\left(-1\right)^{r}q_{i}^{\epsilon r}e_{i}^{\left(s\right)}e_{j}e_{i}^{\left(r\right)}} & \text{for}\;j\neq i,
\end{cases}\\
T''_{i,-\epsilon}\left(f_{j}\right) & =\begin{cases}
-k_{i}^{\epsilon}e_{i} & \text{for}\;j=i,\\
{\displaystyle \sum_{r+s=-\left\langle h_{i},\alpha_{j}\right\rangle }\left(-1\right)^{r}q_{i}^{-\epsilon r}f_{i}^{\left(r\right)}f_{j}f_{i}^{\left(s\right)}} & \text{for}\;j\neq i.
\end{cases}
\end{align}
\end{subequations}

It is known that $\left\{ T'_{i,\epsilon}\right\} _{i\in I}$ and
$\left\{ T''_{i,\epsilon}\right\} _{i\in I}$ satisfy the braid relation.
\begin{lem}[{\cite[Proposition 37.1.2 (d), Section 37.2.4]{Lus:intro}}]

\textup{(1)} We have $T'_{i,\epsilon}\circ T''_{i,-\epsilon}=T''_{i,-\epsilon}\circ T'_{i,\epsilon}=\mathrm{id}.$

\textup{(2)} We have $*\circ T'_{i,\epsilon}\circ*=T''_{i,-\epsilon}$
for $i\in I$ and $\epsilon\in\left\{ \pm1\right\} $.
\end{lem}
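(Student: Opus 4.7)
The plan is to observe that every map appearing in the lemma is either an algebra automorphism of $\Uq$ or a composition of such maps together with the anti-involution $*$, so both equalities can be verified by matching on the generating set $\{q^h\}_{h\in P^{\vee}}\cup\{e_j\}_{j\in I}\cup\{f_j\}_{j\in I}$. For (1), since $T'_{i,\epsilon}$ and $T''_{i,-\epsilon}$ are both $\mathbb{Q}(q)$-algebra automorphisms (this is part of their definition in \cite{Lus:intro}), each of the two compositions is again an algebra automorphism, and it suffices to show that each one fixes the generators. For (2), because $*$ is a $\mathbb{Q}(q)$-algebra anti-involution, the conjugate $*\circ T'_{i,\epsilon}\circ *$ is an algebra automorphism; comparing it to the algebra automorphism $T''_{i,-\epsilon}$ again reduces to evaluation on generators.

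For part (1), I would handle the generators case by case. On $q^h$ one uses that $s_i$ is an involution: $T''_{i,-\epsilon}(T'_{i,\epsilon}(q^h))=T''_{i,-\epsilon}(q^{s_i(h)})=q^{s_i^2(h)}=q^h$, and symmetrically in the other order. On $e_i$ and $f_i$ the verification is an immediate substitution using the explicit images of $e_i,f_i,k_i$ under the two braid operators. For $j\neq i$, apply $T''_{i,-\epsilon}$ termwise to the defining sum
\[
T'_{i,\epsilon}(e_j)=\sum_{r+s=-\langle h_i,\alpha_j\rangle}(-1)^r q_i^{\epsilon r}\,e_i^{(r)}e_j e_i^{(s)},
\]
re-index (the roles of $r$ and $s$ are swapped between the two defining formulas), and collect signs and powers of $q_i$; the resulting sum telescopes, using the $q$-Serre relation, to $e_j$. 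The same argument works for $f_j$, and the opposite composition is handled symmetrically.

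For part (2), the key point is that $*$ is an anti-involution with $*(e_i)=e_i$, $*(f_i)=f_i$, and $*(q^h)=q^{-h}$. Thus on $q^h$ one computes
\[
(*\circ T'_{i,\epsilon}\circ *)(q^h)=*\bigl(T'_{i,\epsilon}(q^{-h})\bigr)=*\bigl(q^{-s_i(h)}\bigr)=q^{s_i(h)}=T''_{i,-\epsilon}(q^h).
\]
On $e_i$ and $f_i$, the anti-involution fixes these generators and merely flips $k_i^\epsilon$ to $k_i^{-\epsilon}$, matching the formulas on both sides. On $e_j,f_j$ with $j\neq i$, the crucial observation is that $*$ reverses the order of products, so
\[
*\bigl(e_i^{(r)}e_j e_i^{(s)}\bigr)=e_i^{(s)}e_j e_i^{(r)},
\]
which transforms the defining sum for $T'_{i,\epsilon}(e_j)$ into precisely the defining sum for $T''_{i,-\epsilon}(e_j)$, after reindexing $r\leftrightarrow s$ (the signs $(-1)^r$ and the powers $q_i^{\epsilon r}$ are consistent because $r+s=-\langle h_i,\alpha_j\rangle$ is fixed). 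The calculation for $f_j$ is identical.

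I expect the only real bookkeeping to be the $j\neq i$ case of part (1): one must check that the double sum obtained by composing the two operators collapses. This collapse is not entirely formal—it uses the $q$-Serre relation implicitly through the fact that the expressions $e_i^{(r)}e_j e_i^{(s)}$ with $r+s=1-a_{ij}$ vanish when weighted appropriately—but after the reindexing made evident by part (2), it reduces to a telescoping identity in one variable. Everything else is a straightforward comparison of defining formulas.
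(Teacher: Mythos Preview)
The paper does not give a proof of this lemma at all; it simply cites Lusztig's book, where part (1) is obtained not by direct computation but from the definition of the braid operators as conjugation by explicit operators on integrable modules (so the inverse relation is automatic at the module level), and part (2) is the direct generator check you describe. Your treatment of part (2) is correct and matches Lusztig's.

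There is, however, a genuine gap in your sketch of part (1) in the case $j\neq i$. You propose to apply $T''_{i,-\epsilon}$ termwise to the sum defining $T'_{i,\epsilon}(e_j)$ and then invoke the $q$-Serre relation to make a telescoping argument. But $T''_{i,-\epsilon}(e_i)=-f_ik_i^{-\epsilon}$, so $T''_{i,-\epsilon}\bigl(e_i^{(r)}\bigr)$ is, up to a sign and a power of $q_i$, equal to $f_i^{(r)}k_i^{-\epsilon r}$. Hence $T''_{i,-\epsilon}\bigl(T'_{i,\epsilon}(e_j)\bigr)$ is a sum of products of the form $f_i^{(r)}\,(\text{Cartan})\,T''_{i,-\epsilon}(e_j)\,f_i^{(s)}\,(\text{Cartan})$, with $T''_{i,-\epsilon}(e_j)$ itself a sum in the $e_i$'s and $e_j$. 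Collapsing this to $e_j$ is not a matter of the $q$-Serre relation among $e_i,e_j$; it requires commuting $f_i$ past $e_i$ via $[e_i,f_i]=(k_i-k_i^{-1})/(q_i-q_i^{-1})$ and is essentially a quantum $\mathfrak{sl}_2$ computation (this is exactly why Lusztig routes the argument through integrable modules). The identity you want does hold, but the mechanism you describe---reindexing plus $q$-Serre telescoping---is not the one that produces it. If you want a purely algebraic proof on generators, you should either carry out the $e_i$--$f_i$ commutation explicitly, or first establish a quantum $\mathfrak{sl}_2$ identity of the type Lusztig proves in \cite[Chapter~5]{Lus:intro} and reduce to that.
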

In the following, we write $T_{i}=T''_{i,1}$ and $T_{i}^{-1}=T'_{i,-1}$
as in \cite[Proposition 1.3.1]{Saito:PBW}.

\subsubsection{}

We have the following orthogonal decomposition with respect to the
bilinear form $\left(\cdot,\cdot\right)\colon\Uq^{-}\otimes\Uq^{-}\to\mathbb{Q}\left(q\right)$
and the compatibility with the canonical basis.
\begin{prop}[{\cite[Proposition 38.1.6, Lemma 38.1.5]{Lus:intro}}]
\label{prop:braidcrystal}

\textup{(1)} For $i\in I$, we have 
\begin{align*}
\Uq^{-}\cap T_{i}\Uq^{-} & =\left\{ x\in\Uq^{-}\mid{}_{i}r\left(x\right)=0\right\} ,\\
\Uq^{-}\cap T_{i}^{-1}\Uq^{-} & =\left\{ x\in\Uq^{-}\mid r_{i}\left(x\right)=0\right\} .
\end{align*}

\textup{(2)} For $i\in I$, we have the following orthogonal decomposition
with respect to $\left(\cdot,\cdot\right)_{-}$:
\[
\Uq^{-}=\left(\Uq^{-}\cap T_{i}\Uq^{-}\right)\oplus f_{i}\Uq^{-}=\left(\Uq^{-}\cap T_{i}^{-1}\Uq^{-}\right)\oplus\Uq^{-}f_{i}.
\]
\end{prop}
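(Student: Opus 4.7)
The plan is to establish both parts directly from Kashiwara's $q$-Boson decomposition (Lemma~\ref{lem:crydecomp}) together with the adjointness $({}_{i}r(x),y)_{-}=(x,f_{i}y)_{-}$ that defines the form, and then to pin down $\Uq^{-}\cap T_{i}\Uq^{-}$ as $\ker({}_{i}r)$ by inspecting Lusztig's explicit braid formulas on generators. The $T_{i}^{-1}/r_{i}$ statements then follow by symmetry from the identities $*\circ T_{i}\circ *=T_{i}^{-1}$ and ${}_{i}r\circ *=*\circ r_{i}$.

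\emph{Part (2).} Lemma~\ref{lem:crydecomp} writes every $x\in\Uq^{-}$ uniquely as $x=x_{0}+\sum_{c\geq 1}f_{i}^{(c)}x_{c}$ with each $x_{c}\in\ker({}_{i}r)$, so separating the $c=0$ summand from the rest yields the internal direct sum $\Uq^{-}=\ker({}_{i}r)\oplus f_{i}\Uq^{-}$ of $\mathbb{Q}(q)$-vector spaces. Orthogonality under $(\cdot,\cdot)_{-}$ is immediate from the defining adjointness: for $x\in\ker({}_{i}r)$ and $z\in\Uq^{-}$ one reads $(x,f_{i}z)_{-}=({}_{i}r(x),z)_{-}=0$. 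Applying $*$, which preserves $(\cdot,\cdot)_{-}$, swaps left and right multiplication by $f_{i}$, and intertwines ${}_{i}r\leftrightarrow r_{i}$, converts this into the mirror-image decomposition $\Uq^{-}=\ker(r_{i})\oplus\Uq^{-}f_{i}$.

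\emph{Part (1), easy direction.} To show $\Uq^{-}\cap T_{i}\Uq^{-}\subseteq\ker({}_{i}r)$, a direct $q$-Boson computation using ${}_{i}r(f_{j})=0$ for $j\neq i$, the elementary identity ${}_{i}r(f_{i}^{(n)})=q_{i}^{-(n-1)}f_{i}^{(n-1)}$, and the Leibniz rule shows that the right-hand side of Lusztig's formula $T_{i}(f_{j})=\sum_{r+s=-a_{ij}}(-1)^{r}q_{i}^{r}f_{i}^{(r)}f_{j}f_{i}^{(s)}$ telescopes under ${}_{i}r$ to zero, so $T_{i}(f_{j})\in\ker({}_{i}r)$ for each $j\neq i$. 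The same Leibniz rule makes $\ker({}_{i}r)$ a subalgebra of $\Uq^{-}$, so the subalgebra $A_{i}$ generated by $\{T_{i}(f_{j}):j\neq i\}$ lies inside $\ker({}_{i}r)$. Finally, because $T_{i}(f_{i})=-k_{i}^{-1}e_{i}\notin\Uq^{-}$, the triangular decomposition of $\Uq$ forces any $y\in\Uq^{-}$ with $T_{i}(y)\in\Uq^{-}$ to have its $T_{i}(f_{i})$-contributions cancel out, so $T_{i}(y)\in A_{i}\subseteq\ker({}_{i}r)$.

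\emph{Part (1), main obstacle: the reverse inclusion $\ker({}_{i}r)\subseteq T_{i}\Uq^{-}$.} I would close this by a graded dimension count, weight by weight. Part~(2) pins down $\dim_{\mathbb{Q}(q)}(\ker({}_{i}r))_{\xi}=\dim(\Uq^{-})_{\xi}-\dim(f_{i}\Uq^{-})_{\xi}$, which by Kashiwara's crystal structure equals $\#\{b\in\mathscr{B}(\infty)_{\xi}\mid\varepsilon_{i}(b)=0\}$. On the other side, $T_{i}$ is an automorphism of $\Uq$ carrying $\Uq^{-}$-weight $\eta$ to $\Uq$-weight $s_{i}\eta$, and $A_{i}$ is generated by $|I|-1$ elements $T_{i}(f_{j})$ of weight $-s_{i}\alpha_{j}$ satisfying (via the braid relations transporting the quantum Serre relations in $\Uq^{-}$) the quantum Serre relations with respect to the reflected Cartan data; a character comparison then matches the dimension of $(A_{i})_{\xi}$ to that of $(\ker({}_{i}r))_{\xi}$ in each weight $\xi$, forcing $A_{i}=\ker({}_{i}r)=\Uq^{-}\cap T_{i}\Uq^{-}$ and completing the argument as in Lusztig~\cite[Lemma~38.1.5, Proposition~38.1.6]{Lus:intro}.
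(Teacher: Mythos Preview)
Your argument for Part~(2) and the telescoping computation ${}_{i}r(T_{i}f_{j})=0$ are fine, but the rest breaks down because the subalgebra $A_{i}=\langle T_{i}(f_{j}):j\neq i\rangle$ is \emph{strictly smaller} than $\Uq^{-}\cap T_{i}\Uq^{-}$. Take $\mathfrak{g}=\mathfrak{sl}_{3}$ and $i=1$: then $A_{1}$ is generated by the single element $T_{1}(f_{2})$ of weight $-\alpha_{1}-\alpha_{2}$, so $(A_{1})_{-\alpha_{2}}=0$; yet $f_{2}\in\Uq^{-}\cap T_{1}\Uq^{-}$ because $T_{1}^{-1}(f_{2})=f_{1}f_{2}-qf_{2}f_{1}\in\Uq^{-}$. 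Thus your ``easy direction'' claim that $T_{i}(y)\in\Uq^{-}$ forces $T_{i}(y)\in A_{i}$ is false (the monomials in the $f_{j}$ are not linearly independent, so cancellation of the $T_{i}(f_{i})$-pieces does not mean $y$ lies in the subalgebra generated by $\{f_{j}:j\neq i\}$), and your dimension count cannot close the gap since $\dim(A_{i})_{\xi}=\dim\langle f_{j}:j\neq i\rangle_{s_{i}\xi}$ vanishes whenever $s_{i}\xi$ has nonzero $\alpha_{i}$-component, whereas $\ker({}_{i}r)_{\xi}$ does not.

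The paper itself does not prove this proposition but cites Lusztig. The actual mechanism in \cite[38.1]{Lus:intro} is different: one shows that $T_{i}$ carries all of $\ker(r_{i})$ into $\Uq^{-}$ (not merely the subalgebra on the $T_{i}^{-1}f_{j}$), and symmetrically $T_{i}^{-1}$ carries $\ker({}_{i}r)$ into $\Uq^{-}$. Once both of these hold, one immediately gets $\ker({}_{i}r)\subseteq T_{i}\Uq^{-}\cap\Uq^{-}\subseteq\ker({}_{i}r)$ and the analogous chain for $T_{i}^{-1}$. Lusztig establishes $T_{i}(\ker r_{i})\subseteq\Uq^{-}$ via the compatibility of $T_{i}$ with the bilinear form (his Lemma~38.1.5), not by a generator-and-relations character count. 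If you want to salvage a dimension argument, you would need to compare $\dim\ker({}_{i}r)_{\xi}$ with $\dim\ker(r_{i})_{s_{i}\xi}$ and use that $T_{i}$ gives an injection between the intersections; but verifying that those two numbers agree is exactly the content of Lusztig's form computation, so nothing is gained.
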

\begin{cor}
\label{cor:dualcan-simple}For $i\in I$, the subalgebra $\Uq^{-}\cap T_{i}\Uq^{-}$
(resp. $\Uq^{-}\cap T_{i}\Uq^{-}$) is compatible with the dual canonical
base, that is we have 
\begin{align*}
\Uq^{-}\cap T_{i}\Uq^{-}\cap\Uq^{-}\left(\mathfrak{g}\right)_{\mathcal{A}}^{\up} & =\bigoplus_{\substack{b\in\mathscr{B}\left(\infty\right)\\
\varepsilon_{i}\left(b\right)=0
}
}\mathcal{A}G^{\up}\left(b\right),\\
\Uq^{-}\cap T_{i}^{-1}\Uq^{-}\cap\Uq^{-}\left(\mathfrak{g}\right)_{\mathcal{A}}^{\up} & =\bigoplus_{\substack{b\in\mathscr{B}\left(\infty\right)\\
\varepsilon_{i}^{*}\left(b\right)=0
}
}\mathcal{A}G^{\up}\left(b\right).
\end{align*}

\end{cor}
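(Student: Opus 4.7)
The plan is to derive the assertion by dualizing the orthogonal decomposition in Proposition~\ref{prop:braidcrystal}(2) against the pairing $(\cdot,\cdot)$, using that the (lower) canonical basis $\mathbf{B}^{\low}$ and the (upper) dual canonical basis $\mathbf{B}^{\up}$ are dual with respect to $(\cdot,\cdot)$, together with the fact that $\mathbf{B}^{\low}$ is compatible with the principal left ideal $f_i\Uq^{-}$.

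First I would establish the statement over $\mathbb{Q}(q)$. By Proposition~\ref{prop:braidcrystal}(2), $\Uq^{-}=(\Uq^{-}\cap T_i\Uq^{-})\oplus f_i\Uq^{-}$ is an orthogonal decomposition with respect to $(\cdot,\cdot)$, so $\Uq^{-}\cap T_i\Uq^{-}$ is the orthogonal complement of $f_i\Uq^{-}$. By the compatibility of the canonical basis with principal left ideals recalled in the review, $f_i\Uq^{-}$ is the $\mathbb{Q}(q)$-span of $\{G^{\low}(b):\varepsilon_i(b)\ge 1\}$. Since $(G^{\up}(b),G^{\low}(b'))=\delta_{b,b'}$, its orthogonal complement is precisely the $\mathbb{Q}(q)$-span of $\{G^{\up}(b):\varepsilon_i(b)=0\}$. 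Hence
\[
\Uq^{-}\cap T_i\Uq^{-}=\bigoplus_{\varepsilon_i(b)=0}\mathbb{Q}(q)\,G^{\up}(b).
\]

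Next I would upgrade this to the $\mathcal{A}$-form. Since $\mathbf{B}^{\up}$ is an $\mathcal{A}$-basis of $\Uq^{-}(\mathfrak{g})_{\mathcal{A}}^{\up}$, any element $x\in\Uq^{-}(\mathfrak{g})_{\mathcal{A}}^{\up}$ expands uniquely as $x=\sum_b c_b G^{\up}(b)$ with $c_b\in\mathcal{A}$; if in addition $x\in\Uq^{-}\cap T_i\Uq^{-}$, the $\mathbb{Q}(q)$-statement just proved forces $c_b=0$ whenever $\varepsilon_i(b)\ge 1$. The reverse inclusion is immediate because each $G^{\up}(b)$ with $\varepsilon_i(b)=0$ lies in both $\Uq^{-}(\mathfrak{g})_{\mathcal{A}}^{\up}$ and $\Uq^{-}\cap T_i\Uq^{-}$. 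This yields the first displayed equality. For the second, I would apply the star involution $*$: from $*\circ T'_{i,\epsilon}\circ *=T''_{i,-\epsilon}$ one obtains $T_i^{-1}=*T_i*$, and $*$ preserves $\Uq^{-}$, $\Uq^{-}(\mathfrak{g})_{\mathcal{A}}^{\up}$, and $\mathbf{B}^{\up}$ (with $*G^{\up}(b)=G^{\up}(*b)$). Applying $*$ to the first identity and re-indexing by $b\mapsto *b$, using $\varepsilon_i(*b')=\varepsilon_i^*(b')$, converts it into the second.

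I do not foresee a serious obstacle: the argument is essentially organizational, and the only non-trivial input—compatibility of the lower canonical basis with $f_i\Uq^{-}$—is a standard result already recalled in the preceding subsection.
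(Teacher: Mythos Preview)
Your argument is correct and is precisely the intended derivation: the corollary is meant to follow by dualizing the orthogonal decomposition of Proposition~\ref{prop:braidcrystal}(2) via the duality $(G^{\up}(b),G^{\low}(b'))=\delta_{b,b'}$ together with the compatibility of $\mathbf{B}^{\low}$ with $f_i\Uq^{-}$ (resp.\ $\Uq^{-}f_i$), which is exactly what you do. The passage to the $\mathcal{A}$-form and the use of $*$ for the second identity are also fine.
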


\subsubsection{}

The following result is due to Saito \cite{Saito:PBW}.
\begin{prop}[{\cite[Proposition 3.4.7, Corollary 3.4.8]{Saito:PBW}}]

\textup{(1)} Let $x\in\Uq^{-}\in\mathscr{L}\left(\infty\right)\cap T_{i}^{-1}\Uq^{-}$
with $b:=x\;\mathrm{mod}\;q\mathscr{L}\left(\infty\right)\in\mathscr{B}\left(\infty\right)$,
we have 
\begin{align*}
T_{i}\left(x\right) & \in\mathscr{L}\left(\infty\right)\cap T_{i}\Uq^{-},\\
T_{i}\left(x\right) & \equiv\tilde{f}_{i}^{*\varphi_{i}\left(b\right)}\tilde{e}_{i}^{\varepsilon_{i}\left(b\right)}b\;\text{mod}\;q\mathscr{L}\left(\infty\right)\in\mathscr{B}\left(\infty\right).
\end{align*}

\textup{(2)} Let $\sigma_{i}\colon\left\{ b\in\mathscr{B}\left(\infty\right)\mid\varepsilon_{i}^{*}\left(b\right)=0\right\} \to\left\{ b\in\mathscr{B}\left(\infty\right)\mid\varepsilon_{i}\left(b\right)=0\right\} $
be the map defined by $\sigma_{i}\left(b\right)=\tilde{f}_{i}^{*\varphi_{i}\left(b\right)}\tilde{e}_{i}^{\varepsilon_{i}\left(b\right)}b$.
Then $\sigma_{i}$ is bijective and its inverse is given by $\sigma_{i}^{*}\left(b\right)=\left(*\circ\sigma_{i}\circ*\right)\left(b\right)=\tilde{f}_{i}^{\varphi_{i}^{*}\left(b\right)}\tilde{e}_{i}^{*\varepsilon_{i}^{*}\left(b\right)}b$. 
\end{prop}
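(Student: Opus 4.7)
The plan is to establish (1) first and then deduce (2) from the compatibility $T_i = *\circ T_i^{-1}\circ *$ noted just after the braid operators are introduced. By Proposition \ref{prop:braidcrystal}, $\Uq^{-}\cap T_i^{-1}\Uq^{-}=\Ker(r_i)$, and by Corollary \ref{cor:dualcan-simple} this subspace is compatible with the dual canonical basis indexed by $b\in\mathscr{B}(\infty)$ with $\varepsilon_i^{*}(b)=0$. Passing through the bilinear form, the intersection $\mathscr{L}(\infty)\cap T_i^{-1}\Uq^{-}$ admits the same crystal indexing at the level of $G^{\low}$. Hence I would reduce the claim element-by-element to the generators $x=G^{\low}(b)$ with $\varepsilon_i^{*}(b)=0$, proceeding by induction on $|\wt(b)|$.

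For the explicit mod-$q$ computation, fix such $b$ and set $n=\varepsilon_i(b)$, $b_0=\tilde{e}_i^{\,n}b$, so that $\varepsilon_i(b_0)=\varepsilon_i^{*}(b_0)=0$. Using Lemma \ref{lem:crydecomp} on the $*$-side, one has $G^{\low}(b)\equiv f_i^{(n)}\,G^{\low}(b_0)\pmod{q\mathscr{L}(\infty)}$. Apply $T_i$: since $T_i(f_i)=-k_i^{-1}e_i$, we have $T_i\bigl(f_i^{(n)}\bigr)=(-1)^nq_i^{n(n-1)/2}k_i^{-n}e_i^{(n)}$, and therefore
\[
T_i\bigl(G^{\low}(b)\bigr)=(-1)^nq_i^{n(n-1)/2}k_i^{-n}e_i^{(n)}\cdot T_i\bigl(G^{\low}(b_0)\bigr).
\]
Since $b_0$ is $*$-highest and $\varepsilon_i(b_0)=0$, the inductive assumption controls $T_i(G^{\low}(b_0))$; converting the action of $e_i^{(n)}$ via the commutator formula $[e_i,-]=(r_ik_i-k_i^{-1}\,_ir)/(q_i-q_i^{-1})$ into a $\tilde{f}_i^{*}$-type action on the image (using that $r_i$ and $\tilde{f}_i^{*}$ are compatible on $\Ker(r_i)$-complements), the net effect modulo $q\mathscr{L}(\infty)$ strips off $\tilde{e}_i^{\varepsilon_i(b)}$ and adjoins $\tilde{f}_i^{*\varphi_i(b)}$, producing precisely $\sigma_i(b)=\tilde{f}_i^{*\varphi_i(b)}\tilde{e}_i^{\varepsilon_i(b)}b$.

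The hard part is the integrality $T_i\bigl(\mathscr{L}(\infty)\cap T_i^{-1}\Uq^{-}\bigr)\subset\mathscr{L}(\infty)$: the Lusztig formulae for $T_i$ involve divided powers $e_i^{(r)}$ that a priori expel elements from the $\mathcal{A}_0$-lattice. I would dispatch this by combining three ingredients, all available within the induction: (i) integrality of $T_i(G^{\low}(b_0))$ for the smaller element $b_0$; (ii) the fact that on $T_i\Uq^{-}$ the divided power $e_i^{(n)}$ implements a transported $\tilde{f}_i^{*}$-type operator which does preserve $\mathscr{L}(\infty)$ by Kashiwara's theory; and (iii) bar-invariance: since $T_i$ commutes with the bar involution on the intersection considered, $T_i(G^{\low}(b))$ is bar-invariant, and matching it against the bar-invariant $G^{\low}(\sigma_i(b))$ via the mod-$q$ identification from the previous paragraph pins the two down to one another by the defining characterization of the canonical basis.

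For (2), the map $\sigma_i^{*}=*\circ\sigma_i\circ *$ carries $\{b:\varepsilon_i(b)=0\}$ to $\{b:\varepsilon_i^{*}(b)=0\}$ because $*$ swaps the two statistics. Applying (1) once to $T_i$ (yielding $\sigma_i$) and once to $T_i^{-1}=*\circ T_i\circ *$ (yielding $\sigma_i^{*}$), the two compositions $\sigma_i\circ\sigma_i^{*}$ and $\sigma_i^{*}\circ\sigma_i$ are the mod-$q$ shadow of $T_i\circ T_i^{-1}=T_i^{-1}\circ T_i=\id$, hence they are the identity on the respective domains, giving bijectivity.
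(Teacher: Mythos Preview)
The paper does not supply its own proof of this Proposition; it is simply quoted from Saito \cite{Saito:PBW}. So there is nothing to compare against, and I can only assess your sketch on its own terms. It has two genuine gaps.

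First, the reduction to $x=G^{\low}(b)$ with $\varepsilon_i^{*}(b)=0$ is not valid: such elements do \emph{not} lie in $\Uq^{-}\cap T_i^{-1}\Uq^{-}=\Ker(r_i)$. The compatibility in Corollary \ref{cor:dualcan-simple} is for the \emph{dual} canonical basis $G^{\up}(b)$; the lower elements $G^{\low}(b)$ with $\varepsilon_i^{*}(b)=0$ are generally not annihilated by $r_i$, because $(G^{\low}(b),G^{\low}(b'))\neq\delta_{b,b'}$. The correct representatives inside $\mathscr{L}(\infty)\cap T_i^{-1}\Uq^{-}$ are the projections $\pi^{i}G^{\low}(b)$ (cf.\ Theorem \ref{thm:braidcanbasis}), and once you pass to those, the clean factorization $G^{\low}(b)\equiv f_i^{(n)}G^{\low}(b_0)$ is no longer directly available: you must first understand how $\pi^{i}$ interacts with the left $f_i$-filtration.

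Second, step (iii) fails outright: $T_i=T''_{i,1}$ does \emph{not} commute with the bar involution. One has $\overline{T''_{i,1}(x)}=T''_{i,-1}(\overline{x})$ (Lusztig \cite[37.2.4]{Lus:intro}), so $T_i(G^{\low}(b))$ is not bar-invariant and the defining characterization of the canonical basis cannot be invoked to pin it down. This is exactly why the lattice preservation $T_i\bigl(\mathscr{L}(\infty)\cap T_i^{-1}\Uq^{-}\bigr)\subset\mathscr{L}(\infty)$ is nontrivial in Saito's paper. Your step (ii), converting the $e_i^{(n)}$-action into a ``$\tilde{f}_i^{*}$-type operator preserving $\mathscr{L}(\infty)$'', is at present only a slogan; making it precise is essentially the substance of Saito's argument.

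Your deduction of (2) from (1) via $*\circ T_i^{-1}\circ *=T_i$ is fine once (1) is established.
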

The bijections $\sigma_{i}$ and $\sigma_{i}^{*}$ is called Saito
crystal reflections. In \cite[Corollary 3.4.8]{Saito:PBW}, $\sigma_{i}$
and $\sigma_{i}^{*}$ are denoted by $\Lambda_{i}$ and $\Lambda_{i}^{-1}$.

Following Baumann-Kamnitzer-Tingley \cite[Section 5.5]{BKT:affine},
for convenience, we extend $\sigma_{i}$ and $\sigma_{i}^{*}$ to
$\mathscr{B}\left(\infty\right)$ by setting 
\begin{align*}
\hat{\sigma}_{i}\left(b\right) & :=\sigma_{i}\left(\tilde{e}_{i}^{*\max}\left(b\right)\right),\\
\hat{\sigma}_{i}^{*}\left(b\right) & :=\sigma_{i}^{*}\left(\tilde{e}_{i}^{\max}\left(b\right)\right),
\end{align*}
so we can consider as maps from $\mathscr{B}\left(\infty\right)$
to itself.

\subsubsection{}

Let $^{i}\pi\colon\Uq^{-}\to\Uq^{-}\cap T_{i}\Uq^{-}$ (resp. $\pi^{i}\colon\Uq^{-}\to\Uq^{-}\cap T_{i}^{-1}\Uq^{-}$
) be the orthogonal projection whose kernel is $f_{i}\Uq^{-}$ (resp.
$\Uq^{-}f_{i}$). We have the following relations among the braid
group action and the (dual) canonical basis.
\begin{thm}[{\cite[Theorem 1.2]{Lus:braid}, \cite[Theorem 4.23]{Kim:qunip}}]
 \label{thm:braidcanbasis}

\textup{(1)} For $b\in\mathscr{B}\left(\infty\right)$ with $\varepsilon_{i}^{*}\left(b\right)=0$,
we have 
\begin{align*}
T_{i}\left(\pi^{i}G^{\low}\left(b\right)\right) & ={}^{i}\pi\left(G^{\low}\left(\sigma_{i}\left(b\right)\right)\right),\\
\left(1-q_{i}^{2}\right)^{\left\langle h_{i},\wt b\right\rangle }T_{i}G^{\up}\left(b\right) & =G^{\up}\left(\sigma_{i}b\right).
\end{align*}

\textup{(2)} For $b\in\mathscr{B}\left(\infty\right)$ with $\varepsilon_{i}\left(b\right)=0$,
we have 
\begin{align*}
T_{i}^{-1}\left(^{i}\pi G^{\low}\left(b\right)\right) & =\pi^{i}\left(G^{\low}\left(\sigma_{i}^{*}\left(b\right)\right)\right),\\
\left(1-q_{i}^{2}\right)^{\left\langle h_{i},\wt b\right\rangle }T_{i}^{-1}G^{\up}\left(b\right) & =G^{\up}\left(\sigma_{i}^{*}b\right).
\end{align*}

\end{thm}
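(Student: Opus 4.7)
My plan is to first establish the lower canonical identity in part (1), namely $T_i(\pi^i G^{\low}(b)) = {}^i\pi G^{\low}(\sigma_i(b))$, by combining Saito's crystal lift of the braid operator with Kashiwara's uniqueness characterization of the lower global basis. First I would show that $\pi^i G^{\low}(b)$ lies in $\mathscr{L}(\infty) \cap T_i^{-1}\Uq^-$ and reduces to $b$ modulo $q\mathscr{L}(\infty)$ when $\varepsilon_i^*(b) = 0$. Lattice preservation of $\pi^i$ can be checked via the right-handed analogue of Lemma \ref{lem:crydecomp}, writing $x = \sum_c x'_c f_i^{(c)}$ with $x'_c \in \Ker r_i$, which shows that the orthogonal projection with respect to Kashiwara's form is a lattice-preserving map. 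The mod-$q$ reduction is exactly $b$ because the subtracted piece lies in $\Uq^- f_i$, whose mod-$q$ support consists of elements with $\varepsilon_i^* \geq 1$ and therefore misses $b$. Saito's proposition then gives $T_i(\pi^i G^{\low}(b)) \in \mathscr{L}(\infty) \cap T_i\Uq^-$ with mod-$q$ reduction $\sigma_i(b)$, and the symmetric argument via the star involution shows ${}^i\pi G^{\low}(\sigma_i(b))$ has exactly the same properties.

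To close the first identity, I would verify that both sides are bar-invariant and integral, i.e.\ belong to $\Uq^-(\mathfrak{g})_{\mathcal{A}}$. Bar-invariance of $\pi^i G^{\low}(b)$ follows from bar-stability of both summands in the orthogonal decomposition $\Uq^- = (\Uq^- \cap T_i^{-1}\Uq^-) \oplus \Uq^- f_i$, together with bar-invariance of $G^{\low}(b)$; compatibility of $T_i$ with the bar involution on its domain then yields bar-invariance of the left-hand side, and the right-hand side is handled analogously. Since both elements lie in $\mathscr{L}(\infty) \cap \overline{\mathscr{L}(\infty)} \cap \Uq^-(\mathfrak{g})_{\mathcal{A}}$ and share the mod-$q$ image $\sigma_i(b)$, Kashiwara's uniqueness characterization of the lower global basis forces them to coincide, both in fact equalling $G^{\low}(\sigma_i(b))$.

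To deduce the dual canonical identity in (1) and all of (2), I would dualize via Kashiwara's bilinear form, using that $G^{\up}(b)$ is the $(\cdot,\cdot)$-dual to $G^{\low}(b)$. For $b$ with $\varepsilon_i^*(b) = 0$, Corollary \ref{cor:dualcan-simple} places $G^{\up}(b)$ inside $\Uq^- \cap T_i^{-1}\Uq^-$, so $T_i G^{\up}(b) \in \Uq^- \cap T_i\Uq^-$; pairing against arbitrary $G^{\low}(c)$ and invoking the lower-basis identity from the previous step reduces the problem to computing the adjoint of $T_i$ with respect to $(\cdot,\cdot)$. The scalar $(1-q_i^2)^{\langle h_i, \wt b\rangle}$ then emerges as the defect of $T_i$ from being an isometry on the weight space of $b$, a quantity that can be computed inductively from the explicit formulas for $T_i(f_j)$ and the twisted-comultiplication characterization of $(\cdot,\cdot)$. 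Part (2) follows from part (1) by conjugating with the star involution, since $T_i^{-1} = \ast \circ T_i \circ \ast$ and $\sigma_i^* = \ast \circ \sigma_i \circ \ast$, and both the projections and the dual canonical basis behave predictably under $\ast$.

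The main obstacle will be the bar-invariance step and the scalar tracking. The bar involution does not commute with $T_i$ on all of $\Uq$, but it does on $\Uq^- \cap T_i^{-1}\Uq^-$ up to an explicit weight-dependent correction; verifying that this correction interacts cleanly with the orthogonal projection $\pi^i$ and reproduces precisely the factor $(1-q_i^2)^{\langle h_i, \wt b\rangle}$ is the most delicate computational point, and probably requires careful induction on the height of $\wt b$ together with the $q$-Boson relations.
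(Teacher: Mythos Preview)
The paper does not prove this theorem; it is quoted from \cite{Lus:braid} and \cite{Kim:qunip}. So the comparison is against Lusztig's original argument, and your proposal has a genuine gap.

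Your central claim is that $\pi^{i}G^{\low}(b)$ and ${}^{i}\pi G^{\low}(\sigma_{i}(b))$ are bar-invariant because ``both summands in the orthogonal decomposition $\Uq^{-}=(\Uq^{-}\cap T_{i}^{-1}\Uq^{-})\oplus\Uq^{-}f_{i}$ are bar-stable.'' This is false: the subspace $\Uq^{-}\cap T_{i}^{-1}\Uq^{-}=\Ker(r_{i})$ is \emph{not} bar-stable. The bar involution intertwines $T''_{i,1}$ with $T''_{i,-1}$, and $\Uq^{-}\cap T''_{i,1}\Uq^{-}\neq\Uq^{-}\cap T''_{i,-1}\Uq^{-}$ in general. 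Concretely, in type $A_{2}$ the element $f_{1}f_{2}^{(2)}-q^{2}f_{2}^{(2)}f_{1}$ lies in $\Ker(r_{1})$, but its bar-image $f_{1}f_{2}^{(2)}-q^{-2}f_{2}^{(2)}f_{1}$ does not. The same computation shows $\Ker({}_{i}r)$ is not bar-stable either. Hence neither projection commutes with $\overline{\phantom{x}}$, neither side of the claimed identity is bar-invariant, and Kashiwara's balanced-triple uniqueness cannot be invoked. Indeed, if your argument worked it would force ${}^{i}\pi G^{\low}(\sigma_{i}(b))=G^{\low}(\sigma_{i}(b))$, i.e.\ $G^{\low}(b')\in\Ker({}_{i}r)$ whenever $\varepsilon_{i}(b')=0$; but $G^{\low}(b')=f_{2}^{(2)}f_{1}$ in $A_{2}$ has $\varepsilon_{1}(b')=0$ while ${}_{1}r(f_{2}^{(2)}f_{1})=q^{2}f_{2}^{(2)}\neq0$.

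Lusztig's proof avoids bar-invariance entirely. He shows that $\{\pi^{i}G^{\low}(b):\varepsilon_{i}^{*}(b)=0\}$ and $\{{}^{i}\pi G^{\low}(b'):\varepsilon_{i}(b')=0\}$ are each \emph{almost-orthonormal} families (pairings lie in $\delta_{b,b'}+q\mathbb{Z}[[q]]$), and that $T_{i}$ restricted to $\Ker(r_{i})\to\Ker({}_{i}r)$ is an isometry for the relevant form. Combined with Saito's lattice result, this forces $T_{i}$ to carry one family to the other up to signs, which are then resolved. The weight-dependent scalar $(1-q_{i}^{2})^{\langle h_{i},\wt b\rangle}$ appears only in the dual statement precisely because $T_{i}$ is an exact isometry for Lusztig's form but not for Kashiwara's; it is not, as you suggest, a correction to bar-compatibility, and your attempt to absorb the bar-failure into such a scalar cannot succeed for the lower identity, which carries no scalar at all.
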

We note that the constant term $\left(1-q_{i}^{2}\right)^{\left\langle h_{i},\wt b\right\rangle }$
depends on a choice of the non-degenerate bilinear form on $\mathbf{U}_{q}^{-}\left(\mathfrak{g}\right)$.

\subsection{\PBW bases}

Let $W=\left\langle s_{i}\mid i\in I\right\rangle $ be the Weyl group
of $\mathfrak{g}$ where $\left\{ s_{i}\mid i\in I\right\} $ is the
set of simple reflections associated with $i\in I$ and $\ell\colon W\to\mathbb{Z}_{\geq0}$
be the length function. For a Weyl group element $w$, let 
\[
I\left(w\right):=\left\{ \left(i_{1},i_{2},\cdots,i_{\ell\left(w\right)}\right)\in I^{\ell\left(w\right)}\mid s_{i_{1}}\cdots s_{i_{\ell\left(w\right)}}=w\right\} 
\]
be the set of reduced words of $w$.

\subsubsection{}

Let $\Delta=\Delta_{+}\sqcup\Delta_{-}$ be the root system of the
Kac-Moody Lie algebra $\mathfrak{g}$ and decomposition into positive
and negative roots.

For a Weyl group element $w\in W$, we set 
\begin{align*}
\Delta_{+}\left(\leq w\right) & :=\Delta_{+}\cap w\Delta_{-}=\left\{ \beta\in\Delta_{+}\mid w^{-1}\beta\in\Delta_{-}\right\} ,\\
\Delta_{+}\left(>w\right) & :=\Delta_{+}\cap w\Delta_{+}=\left\{ \beta\in\Delta_{+}\mid w^{-1}\beta\in\Delta_{+}\right\} .
\end{align*}

It is well-known that $\Delta_{+}\left(\leq w\right)$ and $\Delta_{+}\left(>w\right)$
are bracket closed, that is, for $\alpha,\beta\in\Delta_{+}\left(\leq w\right)$
(resp. $\alpha,\beta\in\Delta_{+}\left(>w\right)$) with $\alpha+\beta\in\Delta_{+}$,
we have $\alpha+\beta\in\Delta_{+}\left(w\right)$ (resp. $\in\Delta_{+}\left(>w\right)$).

For a reduced word $\bm{i}=\left(i_{1},i_{2},\cdots,i_{\ell}\right)\in I\left(w\right)$,
we define positive roots $\beta_{\bm{i},k}\;\left(1\leq k\leq\ell\right)$
by the following formula: 
\[
\beta_{\bm{i},k}=s_{i_{1}}\cdots s_{i_{k-1}}\left(\alpha_{i_{k}}\right)\;\left(1\leq k\leq\ell\right).
\]

It is well known that $\Delta_{+}\left(\leq w\right)=\left\{ \beta_{\bm{i},k}\right\} _{1\leq k\leq\ell}$
and we put a total order on $\Delta_{+}\left(w\right)$. We note that
the total order on $\Delta_{+}\left(\leq w\right)$ does depends on
a choice of a reduced word $\bm{i}\in I\left(w\right)$.

\subsubsection{}

For a Weyl group element $w\in W$ , a reduced word $\bm{i}=\left(i_{1},i_{2},\cdots,i_{\ell}\right)\in I\left(w\right)$
, we define the root vector $f_{\epsilon}\left(\beta_{\bm{i},k}\right)$
associated with $\beta_{\bm{i},k}\in\Delta_{+}\left(w\right)$ and
a sign $\epsilon\in\left\{ \pm1\right\} $ by 
\[
f_{\epsilon}\left(\beta_{\bm{i},k}\right):=T_{i_{1}}^{\epsilon}T_{i_{2}}^{\epsilon}\cdots T_{i_{k-1}}^{\epsilon}\left(f_{i_{k}}\right),
\]
 and its divided power 
\[
f_{\epsilon}\left(\beta_{\bm{i},k}\right)^{\left(c\right)}:=T_{i_{1}}^{\epsilon}T_{i_{2}}^{\epsilon}\cdots T_{i_{k-1}}^{\epsilon}\left(f_{i_{k}}^{\left(c\right)}\right)
\]
 for $c\in\mathbb{Z}_{\geq0}$.
\begin{thm}[{\cite[Proposition 40.2.1, 41.1.3]{Lus:intro}}]

For $w\in W$ , $\bm{w}=\left(i_{1},\cdots,i_{\ell}\right)\in I\left(w\right)$,
$\epsilon\in\left\{ \pm1\right\} $ and $\bm{c}\in\mathbb{Z}_{\geq0}^{\ell}$,
we set
\[
f_{\epsilon}\left(\bm{c},\bm{i}\right):=\begin{cases}
f_{\epsilon}\left(\beta_{\bm{i},1}\right)^{\left(c_{1}\right)}f_{\epsilon}\left(\beta_{\bm{i},2}\right)^{\left(c_{2}\right)}\cdots f_{\epsilon}\left(\beta_{\bm{i},\ell}\right)^{\left(c_{\ell}\right)} & \text{if}\;\epsilon=+1,\\
f_{\epsilon}\left(\beta_{\bm{i},\ell}\right)^{\left(c_{\ell}\right)}f_{\epsilon}\left(\beta_{\bm{i},\ell-1}\right)^{\left(c_{\ell-1}\right)}\cdots f_{\epsilon}\left(\beta_{\bm{i},1}\right)^{\left(c_{1}\right)} & \text{if}\;\epsilon=-1.
\end{cases}
\]
Then $\left\{ f_{\epsilon}\left(\bm{c},\bm{i}\right)\right\} _{\bm{c}\in\mathbb{Z}_{\geq0}^{\ell}}$
forms a basis of a subspace defined to be $\Uq^{-}\left(\leq w,\epsilon\right)$
of $\Uq^{-}\left(\mathfrak{g}\right)$ which does not depend on a
choice of a reduced word $\bm{i}\in I\left(w\right)$. $\left\{ f_{\epsilon}\left(\bm{c},\bm{i}\right)\right\} _{\bm{c}\in\mathbb{Z}_{\geq0}^{\ell}}$
is called the \PBW basis or the lower \PBW basis.\end{thm}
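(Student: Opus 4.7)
The plan is to follow the classical inductive argument of Lusztig (Propositions 40.2.1 and 41.1.3 of \cite{Lus:intro}). First I reduce to $\epsilon = +1$: since $*$ is a $\mathbb{Q}(q)$-algebra anti-involution fixing each $f_i$ and the previous lemma gives $* \circ T'_{i,\epsilon} \circ * = T''_{i,-\epsilon}$, a direct check shows that $*$ sends $f_+(\bm{c}, \bm{i})$ to $f_-(\bm{c}, \bm{i})$, so the $\epsilon = -1$ statement is equivalent to the $\epsilon = +1$ statement. I would then show by induction on $k$ that $f_+(\beta_{\bm{i},k})^{(c)} = T_{i_1} \cdots T_{i_{k-1}}(f_{i_k}^{(c)})$ lies in $\Uq^-$. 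The key inductive step uses the reducedness of $s_{i_1} \cdots s_{i_k}$, which together with Proposition \ref{prop:braidcrystal}(1) ensures that $T_{i_2} \cdots T_{i_{k-1}}(f_{i_k}^{(c)})$ lies in $\Uq^- \cap T_{i_1}^{-1}\Uq^- = \ker(r_{i_1})$, and hence its image under $T_{i_1}$ is in $\Uq^- \cap T_{i_1} \Uq^- \subset \Uq^-$.

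Next I prove linear independence and identify the spanned subspace by induction on $\ell = \ell(w)$. Let $\bm{i}' = (i_2, \ldots, i_\ell)$, a reduced word for $w' = s_{i_1} w$, and write $\bm{c}' = (c_2, \ldots, c_\ell)$. Then
\[
f_+(\bm{c}, \bm{i}) = f_{i_1}^{(c_1)} \cdot T_{i_1}\bigl(f_+(\bm{c}', \bm{i}')\bigr).
\]
By the inductive hypothesis, $\{f_+(\bm{c}', \bm{i}')\}_{\bm{c}'}$ is linearly independent in $\Uq^-$, and by Step 1 each $T_{i_1}(f_+(\bm{c}', \bm{i}'))$ lies in $\Uq^- \cap T_{i_1}\Uq^-$. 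Lemma \ref{lem:crydecomp} says every element of $\Uq^-$ decomposes uniquely as $\sum_c f_{i_1}^{(c)} x_c$ with $x_c \in \ker({}_{i_1}r) = \Uq^- \cap T_{i_1}\Uq^-$, so multiplying on the left by the divided powers $f_{i_1}^{(c_1)}$ preserves linear independence. This both proves linear independence of $\{f_+(\bm{c}, \bm{i})\}_{\bm{c}}$ and describes the spanned subspace as $\Uq^-(\leq w, +; \bm{i}) = \bigoplus_{c_1 \geq 0} f_{i_1}^{(c_1)} \, T_{i_1}(\Uq^-(\leq w', +; \bm{i}'))$.

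To show the span $\Uq^-(\leq w, +; \bm{i})$ does not depend on the chosen reduced word $\bm{i}$, I invoke Matsumoto's theorem: any two reduced words for $w$ are linked by a sequence of braid moves. Since each braid move alters only two consecutive letters, it is enough to verify invariance under a single braid move, which reduces to an explicit rank-two verification inside the parabolic subalgebra generated by two Chevalley generators $f_i, f_j$. The four rank-two cases ($A_1 \times A_1$, $A_2$, $B_2$, $G_2$, governed by $a_{ij}a_{ji} \in \{0,1,2,3\}$) are handled using the braid relations among the $\{T_i\}$ combined with the quantum Serre relations, comparing the two PBW bases of the longest element of $\langle s_i, s_j\rangle$ term by term.

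The main obstacle is undoubtedly Step 3, especially the $G_2$ case, whose explicit rank-two verification requires nontrivial identities among quantum binomials and divided powers. By contrast, Steps 1 and 2 are clean consequences of Proposition \ref{prop:braidcrystal} and Lemma \ref{lem:crydecomp}, tools that are already recorded in the preceding subsections; the induction there is essentially bookkeeping once the Kashiwara decomposition of $\Uq^-$ with respect to a single index is in hand.
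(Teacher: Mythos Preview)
The paper does not prove this theorem; it is stated with a citation to Lusztig's book \cite[Proposition 40.2.1, 41.1.3]{Lus:intro} and used as an imported result. Your proposal correctly sketches the standard inductive argument from that source (reduction via $*$, membership of root vectors in $\Uq^-$ via Proposition~\ref{prop:braidcrystal}, linear independence via Lemma~\ref{lem:crydecomp}, and independence of $\bm{i}$ via Matsumoto plus rank-two checks), so there is nothing to compare against in the paper itself.
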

\begin{defn}
For a Weyl group element $w\in W$ , a reduced word $i=\left(i_{1},\cdots,i_{\ell}\right)\in I\left(w\right)$
and $c=\left(c_{1},\cdots,c_{\ell}\right)\in\mathbb{Z}_{\geq0}^{\ell}$,
we set 
\[
\xi\left(\bm{c},\bm{i}\right):=-\sum_{1\leq k\leq\ell}c_{k}\beta_{k,\bm{i}}\in Q_{-}.
\]

\end{defn}
We also have the following characterization of $\Uq^{-}\left(\leq w,\epsilon\right)$.
\begin{thm}[{\cite[Proposition 2.3]{BCP:affine}}]

For $w\in W$ and $\epsilon\in\left\{ \pm1\right\} $, we have 
\[
\Uq^{-}\left(\leq w,\epsilon\right)=\Uq^{-}\cap T_{w^{\epsilon}}^{\epsilon}\Uq^{\geq0}.
\]

In particular, $\Uq^{-}\left(\leq w,\epsilon\right)$ is a $\mathbb{Q}\left(q\right)$-subalgebra
of $\Uq^{-}$.
\end{thm}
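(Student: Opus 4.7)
The plan is to establish the equality for $\epsilon=+1$ and then deduce the case $\epsilon=-1$ by symmetry. The star anti-involution $*$ satisfies $*\circ T'_{i,\epsilon}\circ * = T''_{i,-\epsilon}$, and iterating this identity along the reduced word gives $*(T_{w}(x))=T_{w^{-1}}^{-1}(*(x))$. Since $*$ preserves both $\Uq^{-}$ and $\Uq^{\geq 0}$, applying $*$ to a statement for $(w,+1)$ yields the statement for the same element with $\epsilon=-1$, and one checks directly that $*$ carries $f_{+}(\bm{c},\bm{i})$ to the reverse-ordered product $f_{-}(\bm{c},\bm{i})$, so the two PBW subspaces $\Uq^{-}(\leq w,+1)$ and $\Uq^{-}(\leq w,-1)$ are interchanged as well. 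Thus it suffices to prove $\Uq^{-}(\leq w,+1)=\Uq^{-}\cap T_{w}\Uq^{\geq 0}$.

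For the inclusion $\Uq^{-}(\leq w,+1)\subseteq\Uq^{-}\cap T_{w}\Uq^{\geq 0}$, the key observation is that $T_{w}\Uq^{\geq 0}$ is a $\mathbb{Q}(q)$-subalgebra of $\Uq$, as the image of the subalgebra $\Uq^{\geq 0}$ under the algebra automorphism $T_{w}$; consequently $\Uq^{-}\cap T_{w}\Uq^{\geq 0}$ is a subalgebra of $\Uq^{-}$. Because $\Uq^{-}(\leq w,+1)$ is spanned by the PBW monomials $\prod_{k}f_{+}(\beta_{\bm{i},k})^{(c_{k})}$, it is enough to verify that each PBW root vector $f_{+}(\beta_{\bm{i},k})=T_{i_{1}}\cdots T_{i_{k-1}}(f_{i_{k}})$ lies in $T_{w}\Uq^{\geq 0}$. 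Equivalently, one shows $T_{w}^{-1}f_{+}(\beta_{\bm{i},k})=T_{i_{\ell}}^{-1}\cdots T_{i_{k}}^{-1}(f_{i_{k}})\in\Uq^{\geq 0}$. After the first step $T_{i_{k}}^{-1}(f_{i_{k}})=-k_{i_{k}}e_{i_{k}}\in\Uq^{\geq 0}$, the remaining operators $T_{i_{\ell}}^{-1}\cdots T_{i_{k+1}}^{-1}$ applied along the reduced subword $(i_{k+1},\dots,i_{\ell})$ produce a Cartan element times a PBW-type root vector for $\Uq^{+}$, which remains in $\Uq^{\geq 0}$.

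For the reverse inclusion $\Uq^{-}\cap T_{w}\Uq^{\geq 0}\subseteq\Uq^{-}(\leq w,+1)$ I would proceed by induction on $\ell(w)$. The base case $w=e$ is immediate. For the inductive step write $w=s_{i}w'$ with $i=i_{1}$, so that $T_{w}=T_{i}T_{w'}$ and, directly from the definition of the PBW basis, $\Uq^{-}(\leq w,+1)=\sum_{c\geq 0}f_{i}^{(c)}T_{i}(\Uq^{-}(\leq w',+1))$. Given $x\in\Uq^{-}\cap T_{w}\Uq^{\geq 0}$, apply Lemma \ref{lem:crydecomp} to decompose uniquely $x=\sum_{c\geq 0}f_{i}^{(c)}x_{c}$ with $x_{c}\in\Uq^{-}\cap T_{i}\Uq^{-}$; because $x_{c}\in T_{i}\Uq^{-}$, the element $T_{i}^{-1}(x_{c})$ belongs to $\Uq^{-}$. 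The inductive step is completed by showing $T_{i}^{-1}(x_{c})\in T_{w'}\Uq^{\geq 0}$ for every $c$, as the inductive hypothesis then gives $T_{i}^{-1}(x_{c})\in\Uq^{-}(\leq w',+1)$ and hence $f_{i}^{(c)}x_{c}\in\Uq^{-}(\leq w,+1)$.

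The main obstacle is precisely this compatibility claim: the condition $x\in T_{w}\Uq^{\geq 0}$ must propagate to each summand $x_{c}$ in the decomposition along powers of $f_{i}$. One direction is automatic, since $f_{i}\in T_{w}\Uq^{\geq 0}$ by the first inclusion and $T_{i}(\Uq^{-}(\leq w',+1))\subseteq T_{i}T_{w'}\Uq^{\geq 0}=T_{w}\Uq^{\geq 0}$, so any sum of the expected form certainly lies in $T_{w}\Uq^{\geq 0}$. The nontrivial direction is to extract the individual $x_{c}$'s from membership in $T_{w}\Uq^{\geq 0}$, and here the natural tools are the orthogonal decomposition $\Uq^{-}=(\Uq^{-}\cap T_{i}\Uq^{-})\oplus f_{i}\Uq^{-}$ of Proposition \ref{prop:braidcrystal} together with the $q$-Boson relation: iteratively applying the projection $^{i}\pi$ and the Kashiwara operator $\tilde{e}_{i}$ isolates $x_{c}$ as a polynomial expression whose image under $T_{i}^{-1}$ is controllable. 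Granted this compatibility, the induction closes, the equality is established, and the ``in particular'' conclusion that $\Uq^{-}(\leq w,+1)$ is a subalgebra follows at once from the subalgebra property of $\Uq^{-}\cap T_{w}\Uq^{\geq 0}$.
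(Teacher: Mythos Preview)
The paper does not prove this theorem; it is quoted from \cite[Proposition 2.3]{BCP:affine}, with only the side remark that the subalgebra property follows from the Levendorskii--Soibelman formula (see \cite[Section 4.3]{Kim:qunip}). So there is no in-paper proof to compare against, and your proposal must be judged on its own.

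Your reduction to $\epsilon=+1$ via $*$ and your argument for the easy inclusion $\Uq^{-}(\leq w,+1)\subseteq\Uq^{-}\cap T_{w}\Uq^{\geq 0}$ are fine. The gap is in the reverse inclusion, and you identify it yourself: you need that the decomposition $x=\sum_{c\geq 0}f_{i}^{(c)}x_{c}$ with $x_{c}\in\Ker({}_{i}r)$ is compatible with membership in $T_{w}\Uq^{\geq 0}$, but you only say that ${}^{i}\pi$ and $\tilde{e}_{i}$ ``isolate $x_{c}$ as a polynomial expression whose image under $T_{i}^{-1}$ is controllable'' and then write ``Granted this compatibility.'' That is not a proof. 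The operators ${}^{i}\pi$, $\tilde{e}_{i}$, ${}_{i}r$ are all built from left multiplication by $e_{i}$ (via the commutator formula) and from the orthogonal projection inside $\Uq^{-}$; since $\alpha_{i_{1}}\in\Delta_{+}(\leq w)$ we have $e_{i_{1}}\notin T_{w}\Uq^{\geq 0}$, so there is no reason these operations should preserve $\Uq^{-}\cap T_{w}\Uq^{\geq 0}$. Your one-step-at-a-time induction therefore stalls exactly where you say it does.

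The standard fix (and what Beck--Chari--Pressley actually do) is to avoid the inductive compatibility question entirely: iterate Lemma~\ref{lem:crydecomp} along the whole reduced word first, obtaining
\[
x=\sum_{\bm{c}\in\mathbb{Z}_{\geq 0}^{\ell}} f_{+}(\bm{c},\bm{i})\, T_{w}(y_{\bm{c}}),\qquad y_{\bm{c}}\in\Uq^{-},
\]
exactly as recorded in the paper's Lemma~\ref{lem:PBWdecomp}. Then apply $T_{w}^{-1}$ once to get $T_{w}^{-1}x=\sum_{\bm{c}} T_{w}^{-1}\bigl(f_{+}(\bm{c},\bm{i})\bigr)\, y_{\bm{c}}$. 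Each factor $T_{w}^{-1}\bigl(f_{+}(\bm{c},\bm{i})\bigr)$ lies in $\Uq^{\geq 0}$ and, after stripping off its Cartan part, gives a linearly independent family of PBW-type monomials in $\Uq^{+}$ (cf.\ \cite[Proposition 40.2.1]{Lus:intro}). Now the hypothesis $T_{w}^{-1}x\in\Uq^{\geq 0}$ combined with the triangular decomposition $\Uq\cong\Uq^{+}\otimes\Uq^{0}\otimes\Uq^{-}$ forces every $y_{\bm{c}}\in\Uq^{-}$ to be a scalar, whence $x\in\Uq^{-}(\leq w,+1)$. This is a single global linear-independence argument rather than an inductive compatibility statement, and it closes the gap.
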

In fact, it can be shown that $\Uq^{-}\left(\leq w,\epsilon\right)$
is a $\mathbb{Q}\left(q\right)$-subalgebra of $\Uq^{-}$ which is
generated by $\left\{ f_{\epsilon}\left(\beta_{\bm{i},k}\right)\right\} _{1\leq k\leq l}$
can be shown by the Levendorskii-Soibelman formula. For more details,
see \cite[Section 4.3]{Kim:qunip}.

\subsubsection{\PBW basis and crystal basis}
\begin{thm}
\label{thm:PBW}

For $w\in W$, $\bm{i}\in\left(i_{1},\cdots,i_{\ell}\right)\in I\left(w\right)$
and $\epsilon\in\left\{ \pm1\right\} $,

\textup{(1)} we have $f_{\epsilon}\left(\bm{c},\bm{i}\right)\in\mathscr{L}\left(\infty\right)$
and 
\[
b_{\epsilon}\left(\bm{c},\bm{i}\right):=f_{\epsilon}\left(\bm{c},\bm{i}\right)\;\text{mod}\;q\mathscr{L}\left(\infty\right)\in\mathscr{B}\left(\infty\right).
\]

\textup{(2)} The map $\mathbb{Z}_{\geq0}^{\ell}\to\mathscr{B}\left(\infty\right)$
which is defined by $\bm{c}\mapsto b_{\epsilon}\left(\bm{c},\bm{i}\right)$
is injective. We denote the image by $\mathscr{B}\left(w,\epsilon\right)$,
and this does not depend on a choice of a reduced word $\bm{i}\in I\left(w\right)$.
\end{thm}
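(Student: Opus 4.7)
My plan is to induct on $\ell=\ell(w)$, using Saito's braid--crystal compatibility (the proposition attributed to Saito above) as the main engine. I handle only $\epsilon=+1$; the case $\epsilon=-1$ follows by conjugating with the star involution $*$, which preserves $\mathscr{L}(\infty)$ and $\mathscr{B}(\infty)$ and exchanges the two kinds of PBW monomials.

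For the inductive step of (1), split $\bm{i}=(i_1,\bm{i}')$ with $\bm{i}'=(i_2,\ldots,i_\ell)\in I(s_{i_1}w)$ and $\bm{c}=(c_1,\bm{c}')$. Unfolding the definition gives the factorization
\[
f_+(\bm{c},\bm{i})=f_{i_1}^{(c_1)}\cdot T_{i_1}\bigl(f_+(\bm{c}',\bm{i}')\bigr).
\]
Write $x:=f_+(\bm{c}',\bm{i}')$; by induction $x\in\mathscr{L}(\infty)$ and $b':=x\bmod q\mathscr{L}(\infty)\in\mathscr{B}(\infty)$. Each factor $T_{i_2}\cdots T_{i_{k-1}}(f_{i_k})$ in $x$ is sent by $T_{i_1}$ to the PBW root vector $T_{i_1}T_{i_2}\cdots T_{i_{k-1}}(f_{i_k})$ of $w$ along $\bm{i}$, which lies in $\Uq^-$; since $T_{i_1}$ is an algebra automorphism we conclude $T_{i_1}(x)\in\Uq^-$, i.e.\ $x\in T_{i_1}^{-1}\Uq^-$. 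Saito's proposition then yields
\[
T_{i_1}(x)\in\mathscr{L}(\infty)\cap T_{i_1}\Uq^-,\qquad T_{i_1}(x)\equiv\tilde f_{i_1}^{*\varphi_{i_1}(b')}\tilde e_{i_1}^{\varepsilon_{i_1}(b')}b'\bmod q\mathscr{L}(\infty),
\]
with the right-hand side in $\mathscr{B}(\infty)$. Because $T_{i_1}\Uq^-=\ker({}_{i_1}r)$ by Proposition~\ref{prop:braidcrystal}(1), the decomposition of Lemma~\ref{lem:crydecomp} for $T_{i_1}(x)$ has only its $c=0$ summand, so left multiplication by $f_{i_1}^{(c_1)}$ agrees with the Kashiwara operator $\tilde f_{i_1}^{c_1}$. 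Hence $f_+(\bm{c},\bm{i})=\tilde f_{i_1}^{c_1}(T_{i_1}(x))\in\mathscr{L}(\infty)$ and $b_+(\bm{c},\bm{i})\in\mathscr{B}(\infty)$, proving (1).

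Injectivity in (2) is obtained by inverting the inductive construction. Since $T_{i_1}(x)\in\ker({}_{i_1}r)$, its crystal class has $\varepsilon_{i_1}=0$, so $c_1=\varepsilon_{i_1}(b_+(\bm{c},\bm{i}))$ is read off and applying $\tilde e_{i_1}^{c_1}$ recovers $T_{i_1}(x)\bmod q\mathscr{L}(\infty)$. Because $T_{i_1}$ is an algebra automorphism and Saito's proposition applied to both $T_{i_1}$ and $T_{i_1}^{-1}$ gives a bijection between the crystal classes of $\mathscr{L}(\infty)\cap T_{i_1}^{-1}\Uq^-$ and of $\mathscr{L}(\infty)\cap T_{i_1}\Uq^-$, one recovers $b'$, and then $\bm{c}'$ by the inductive hypothesis.

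The main obstacle is the independence of $\mathscr{B}(w,+1)$ from the reduced word. My plan is to leverage the already-asserted independence of the subspace $\Uq^-(\leq w,+1)$ (Lusztig's theorem cited just before the statement). Fix $\xi\in Q_-$. The $\mathbb{Q}(q)$-linear independence and spanning by PBW monomials make $L:=\Uq^-(\leq w,+1)_\xi\cap\mathscr{L}(\infty)_\xi$ an $\mathcal{A}_0$-lattice of rank $n:=|\{\bm{c}:\xi(\bm{c},\bm{i})=\xi\}|$, and the saturation $L\cap q\mathscr{L}(\infty)=qL$, immediate from $\Uq^-(\leq w,+1)$ being a $\mathbb{Q}(q)$-subspace, gives $\dim_{\mathbb{Q}}L/qL=n$. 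The $n$ distinct elements $\{b_+(\bm{c},\bm{i})\}_{\xi(\bm{c},\bm{i})=\xi}\subset\mathscr{B}(\infty)$ all lie in this image and therefore form a basis of it. Consequently $\mathscr{B}(w,+1)$ equals the intersection of $\mathscr{B}(\infty)$ with the image of $\Uq^-(\leq w,+1)\cap\mathscr{L}(\infty)$ in $\mathscr{L}(\infty)/q\mathscr{L}(\infty)$, a description that depends only on $w$.
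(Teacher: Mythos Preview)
The paper does not prove this theorem: it is stated in the review Section~2 as a known background result (essentially due to Saito \cite{Saito:PBW} and Lusztig), with no proof given. So there is no ``paper's own proof'' to compare against.

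That said, your argument is correct and is precisely the standard one. The inductive step for (1) via Saito's proposition is exactly how this is done in the literature; your observation that $T_{i_1}(x)\in\ker({}_{i_1}r)$ forces $f_{i_1}^{(c_1)}\cdot(-)=\tilde f_{i_1}^{c_1}(-)$ is the right mechanism. Your injectivity argument recovers the $(\bm{i},\epsilon)$-Lusztig datum of $b_+(\bm{c},\bm{i})$, which the paper later introduces explicitly and notes satisfies $L_\epsilon(b_\epsilon(\bm{c},\bm{i}),\bm{i})=\bm{c}$; you have essentially reproved that identity. Your independence argument---identifying $\mathscr{B}(w,+1)$ with $\mathscr{B}(\infty)$ intersected with the image of $\Uq^-(\leq w,+1)\cap\mathscr{L}(\infty)$ in $\mathscr{L}(\infty)/q\mathscr{L}(\infty)$---is clean; the saturation step $L\cap q\mathscr{L}(\infty)=qL$ and the counting are both fine since $\mathcal{A}_0$ is a DVR and $\mathscr{B}(\infty)$ is a $\mathbb{Q}$-basis.
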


\subsubsection{}
\begin{prop}[{\cite[Proposition 4.26 (2)]{Kim:qunip}}]
\label{prop:dualroot}

For $c\geq1$ and $1\leq k\leq\ell$, let 
\[
f_{\epsilon}^{\up}\left(\beta_{\bm{i},k}\right)^{\left\{ c\right\} }=f_{\epsilon}\left(\beta_{\bm{i},k}\right)^{\left(c\right)}/\left(f_{\epsilon}\left(\beta_{\bm{i},k}\right)^{\left(c\right)},f_{\epsilon}\left(\beta_{\bm{i},k}\right)^{\left(c\right)}\right),
\]
then we have $f_{\epsilon}^{\up}\left(\beta_{\bm{i},k}\right)^{\left\{ c\right\} }=q_{i_{k}}^{c\left(c-1\right)/2}f_{\epsilon}^{\up}\left(\beta_{\bm{i},k}\right)^{c}\in\mathbf{B}^{\up}$.\end{prop}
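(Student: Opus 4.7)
\textbf{Proof plan for Proposition \ref{prop:dualroot}.} I would argue by induction on $k$, the position of the root in the reduced word $\bm{i}=(i_1,\dots,i_\ell)$. The base case $k=1$ is immediate: $\beta_{\bm{i},1}=\alpha_{i_1}$ so $f_{\epsilon}(\beta_{\bm{i},1})^{(c)}=f_{i_1}^{(c)}$, and the claim reduces word for word to Proposition \ref{prop:dualsimpleroot}. Treat the case $\epsilon=+1$ in detail; the case $\epsilon=-1$ then follows by applying the $*$-involution, which swaps $T_{i,\epsilon}'$ and $T_{i,-\epsilon}''$, preserves $\mathbf{B}^{\up}$, and interchanges $\varepsilon_i\leftrightarrow\varepsilon_i^*$ and $\sigma_i\leftrightarrow\sigma_i^*$.

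For the inductive step, let $\bm{i}''=(i_2,\dots,i_\ell)$, a reduced word for $s_{i_1}w$. Since $\beta_{\bm{i},k}=s_{i_1}(\beta_{\bm{i}'',k-1})$ is still a positive root, the factorization
\[
f_{+}(\beta_{\bm{i},k})^{(c)}=T_{i_1}\bigl(f_{+}(\beta_{\bm{i}'',k-1})^{(c)}\bigr)
\]
holds in $\Uq^-$, and in fact $f_+(\beta_{\bm{i}'',k-1})^{(c)}\in\Uq^-\cap T_{i_1}^{-1}\Uq^-$ (this is the Levendorskii-Soibelman-type observation that $\Uq^-(\le s_{i_1}w,+1)\subset T_{i_1}(\Uq^-\cap T_{i_1}^{-1}\Uq^-)$). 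By the induction hypothesis, there is $b''\in\mathscr{B}(\infty)$ with
\[
G^{\up}(b'')=f_{+}^{\up}(\beta_{\bm{i}'',k-1})^{\{c\}}=q_{i_k}^{c(c-1)/2}f_{+}^{\up}(\beta_{\bm{i}'',k-1})^{c}.
\]
Combined with Corollary \ref{cor:dualcan-simple} and Proposition \ref{prop:braidcrystal}, the fact that $G^{\up}(b'')\in\Uq^-\cap T_{i_1}^{-1}\Uq^-$ gives $\varepsilon_{i_1}^{*}(b'')=0$, which is the hypothesis required to invoke Theorem \ref{thm:braidcanbasis}.

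Theorem \ref{thm:braidcanbasis} then yields
\[
G^{\up}(\sigma_{i_1}b'')=(1-q_{i_1}^{2})^{\langle h_{i_1},\wt b''\rangle}T_{i_1}G^{\up}(b''),
\]
and the right-hand side equals a scalar multiple of $T_{i_1}f_{+}(\beta_{\bm{i}'',k-1})^{(c)}=f_{+}(\beta_{\bm{i},k})^{(c)}$, so $\sigma_{i_1}b''$ is the crystal label of a normalization of this root vector. It remains to pin down that scalar. Taking the square norm of both sides via the bar-conjugate pairing of $G^{\up}$ with $G^{\low}$, and using that the canonical basis side satisfies $T_{i_1}G^{\low}(\pi^{i_1}(\cdot))={}^{i_1}\pi G^{\low}(\sigma_{i_1}(\cdot))$, one obtains the transformation rule for inner products
\[
\bigl(f_{+}(\beta_{\bm{i},k})^{(c)},f_{+}(\beta_{\bm{i},k})^{(c)}\bigr)=(1-q_{i_1}^{2})^{-\langle h_{i_1},\wt b''\rangle}\bigl(f_{+}(\beta_{\bm{i}'',k-1})^{(c)},f_{+}(\beta_{\bm{i}'',k-1})^{(c)}\bigr),
\]
which, combined with the display above, shows $G^{\up}(\sigma_{i_1}b'')=f_{+}(\beta_{\bm{i},k})^{(c)}/(f_{+}(\beta_{\bm{i},k})^{(c)},f_{+}(\beta_{\bm{i},k})^{(c)})=f_{+}^{\up}(\beta_{\bm{i},k})^{\{c\}}$. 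Propagating the same identity through the factorization $f_{+}(\beta_{\bm{i},k})^c=T_{i_1}(f_{+}(\beta_{\bm{i}'',k-1})^c)$ converts the induction hypothesis $f_{+}^{\up}(\beta_{\bm{i}'',k-1})^{\{c\}}=q_{i_k}^{c(c-1)/2}f_{+}^{\up}(\beta_{\bm{i}'',k-1})^{c}$ into $f_{+}^{\up}(\beta_{\bm{i},k})^{\{c\}}=q_{i_k}^{c(c-1)/2}f_{+}^{\up}(\beta_{\bm{i},k})^{c}$, completing the step.

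The main obstacle is the bookkeeping of the constant $(1-q_{i_1}^{2})^{\langle h_{i_1},\wt b''\rangle}$ arising from Theorem \ref{thm:braidcanbasis}: one must show it exactly matches the ratio of the two inner products, so that the braid-transported dual canonical basis element lands on the nose at the normalized divided power. I would handle this either by the direct Lusztig-style formula for norms of PBW monomials (all terms away from the $k$-th slot are zero, so only the rank-one factor contributes) or by induction in parallel with the main induction using the $q$-Boson formulas for $_ir$ and $r_i$.
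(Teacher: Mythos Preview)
The paper does not prove this proposition; it is quoted from the author's earlier work \cite[Proposition 4.26 (2)]{Kim:qunip}. So there is no in-paper proof to compare against.

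That said, your outline is the natural one and matches the argument in \cite{Kim:qunip}: induct on $k$, with the base case supplied by Proposition~\ref{prop:dualsimpleroot}, and the inductive step carried out by transporting along $T_{i_1}$ via Theorem~\ref{thm:braidcanbasis}. The only substantive computation is the transformation law for the bilinear form under $T_{i_1}$, namely
\[
\bigl(T_{i_1}x,\,T_{i_1}y\bigr)=(1-q_{i_1}^{2})^{-\langle h_{i_1},\xi\rangle}\,(x,y)
\quad\text{for }x,y\in\Uq^{-}\cap T_{i_1}^{-1}\Uq^{-}\text{ homogeneous of weight }\xi,
\]
which is exactly what cancels the constant in Theorem~\ref{thm:braidcanbasis} and forces $G^{\up}(\sigma_{i_1}b'')$ to equal the normalized divided power. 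You should cite this directly (it is \cite[Proposition 38.2.1]{Lus:intro} for Lusztig's form, with the Kashiwara-form version obtained by comparing normalizations of $(f_i,f_i)$) rather than rederive it via $q$-boson relations, since that is where the honest work lies. One small slip: in your parenthetical you wrote $T_{i_1}G^{\low}(\pi^{i_1}(\cdot))$, but the projection $\pi^{i_1}$ goes outside $G^{\low}$, not inside; this is just a typo and does not affect the argument.
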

\begin{defn}[dual Poincare-Birkhoff-Witt basis]
 For $w\in W$ , $\bm{i}\in I\left(w\right)$ and $c\in\mathbb{Z}_{\geq0}^{\ell}$,
we set 
\[
f_{\epsilon}^{\up}\left(\bm{c},\bm{i}\right):=\dfrac{f_{\epsilon}\left(\bm{c},\bm{i}\right)}{\left(f_{\epsilon}\left(\bm{c},\bm{i}\right),f_{\epsilon}\left(\bm{c},\bm{i}\right)\right)}
\]
and $\left\{ f_{\epsilon}^{\up}\left(\bm{c},\bm{i}\right)\right\} _{\bm{c}\in\mathbb{Z}_{\geq0}^{\ell}}$
is called the dual \PBW basis or upper \PBW basis.

By the definition of the dual Poincare-Birkhoff-Witt basis and the
computation of $\left(f_{\epsilon}\left(\bm{c},\bm{i}\right),f_{\epsilon}\left(\bm{c},\bm{i}\right)\right)$,
we have
\begin{align*}
f_{\epsilon}^{\up}\left(\bm{c},\bm{i}\right) & =\begin{cases}
f_{\epsilon}^{\up}\left(\beta_{\bm{i},1}\right)^{\left[c_{1}\right]}f_{\epsilon}^{\up}\left(\beta_{\bm{i},2}\right)^{\left[c_{2}\right]}\cdots f_{\epsilon}^{\up}\left(\beta_{\bm{i},\ell}\right)^{\left[c_{\ell}\right]} & \text{if}\;\epsilon=+1,\\
f_{\epsilon}^{\up}\left(\beta_{\bm{i},\ell}\right)^{\left[c_{\ell}\right]}f_{\epsilon}^{\up}\left(\beta_{\bm{i},\ell-1}\right)^{\left[c_{\ell-1}\right]}\cdots f_{\epsilon}^{\up}\left(\beta_{\bm{i},1}\right)^{\left[c_{1}\right]} & \text{if}\;\epsilon=-1.
\end{cases}\\
 & =\begin{cases}
\left(1-q_{i_{1}}^{2}\right)^{\left\langle h_{i_{1}},\xi\left(c_{\geq2},i_{\geq2}\right)\right\rangle }f_{\epsilon}^{\up}\left(\beta_{\bm{i},1}\right)^{\left[c_{1}\right]}T_{i_{1}}^{\epsilon}\left(f_{\epsilon}^{\up}\left(\beta_{\bm{i}_{\geq2},2}\right)^{\left[c_{2}\right]}\cdots f_{\epsilon}^{\up}\left(\beta_{\bm{i}_{\geq2},\ell}\right)^{\left[c_{\ell}\right]}\right) & \text{if}\;\epsilon=+1,\\
\left(1-q_{i_{1}}^{2}\right)^{\left\langle h_{i_{1}},\xi\left(c_{\geq2},i_{\geq2}\right)\right\rangle }T_{i_{1}}^{\epsilon}\left(f_{\epsilon}^{\up}\left(\beta_{\bm{i}_{\geq2},2}\right)^{\left[c_{2}\right]}\cdots f_{\epsilon}^{\up}\left(\beta_{\bm{i}_{\geq2},\ell}\right)^{\left[c_{\ell}\right]}\right)f_{\epsilon}^{\up}\left(\beta_{\bm{i},1}\right)^{\left[c_{1}\right]} & \text{if}\;\epsilon=-1.
\end{cases}
\end{align*}
where $\bm{c}_{\geq2}=\left(c_{2},\cdots,c_{\ell}\right)\in\mathbb{Z}_{\geq0}^{\ell-1}$
, $w_{\geq2}=s_{i_{2}}\cdots s_{i_{\ell}}$ and $i_{\geq2}=\left(i_{2},\cdots,i_{\ell}\right)\in I\left(w_{\geq2}\right)$.
\end{defn}
Using the Levendorskii-Soibelman formula (see \cite[Theorem 4.27]{Kim:qunip})
and the definition of the dual canonical basis, we have the following
result.
\begin{thm}[{\cite[Theorem 4.25, Theorem 4.29]{Kim:qunip}}]
\label{thm:qunip}

Let $w\in W$ and $\bm{i}\in I\left(w\right)$, the \PBW basis satisfying
the following properties

\textup{(1)} The subalgebra $\Uq^{-}\left(\leq w,\epsilon\right)$
is compatible with the dual canonical basis, that is there exists
a subset $\mathscr{B}\left(\leq w,\epsilon\right):=\mathscr{B}\left(\Uq^{-}\left(\leq w,\epsilon\right)\right)\subset\mathscr{B}\left(\infty\right)$
such that 
\[
\Uq^{-}\left(\leq w,\epsilon\right)=\bigoplus_{b\in\mathscr{B}\left(\leq w,\epsilon\right)}\mathbb{Q}\left(q\right)G^{\up}\left(b\right).
\]

\textup{(2)} The transition matrix between the dual \PBW basis and
the dual canonical basis is triangular with 1's on the diagonal with
respect to the (left) lexicographic order $\leq$ on $\mathbb{Z}_{\geq0}^{\ell}$.
More precisely, we have 
\[
f_{\epsilon}^{\up}\left(\bm{c},\bm{i}\right)=G^{\up}\left(b_{\epsilon}\left(\bm{c},\bm{i}\right)\right)+\sum_{\bm{c}^{'}<\bm{c}}d_{\bm{c},\bm{c}^{'}}^{\bm{i}}\left(q\right)G^{\up}\left(b_{\epsilon}\left(\bm{c}^{'},\bm{i}\right)\right)
\]

with $d_{\bm{c},\bm{c}^{'}}^{\bm{i}}\left(q\right):=\left(f_{\epsilon}^{\up}\left(\bm{c},\bm{i}\right),G^{\low}\left(b_{\epsilon}\left(\bm{c}^{'},\bm{i}\right)\right)\right)\in q\mathbb{Z}\left[q\right]$.\end{thm}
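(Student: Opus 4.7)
I would deduce (1) from (2). Once the transition matrix from the dual PBW basis $\{f^{\up}_{\epsilon}(\bm{c}, \bm{i})\}_{\bm{c}}$ to $\{G^{\up}(b_{\epsilon}(\bm{c}, \bm{i}))\}_{\bm{c}}$ is unitriangular in the lexicographic order with entries in $\mathbb{Z}[q]$, the elements $G^{\up}(b_{\epsilon}(\bm{c}, \bm{i}))$ also span $\Uq^{-}(\leq w, \epsilon)$, and the index set $\mathscr{B}(\leq w, \epsilon) := \{b_{\epsilon}(\bm{c}, \bm{i})\}_{\bm{c}}$ is independent of $\bm{i}$ by Theorem \ref{thm:PBW}(2). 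So the essential content is (2).

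For (2), I would argue by induction on $\ell(w)$, handling $\epsilon = +1$ (the case $\epsilon = -1$ follows by applying the $*$-involution, which preserves $\mathbf{B}^{\up}$ and swaps $T_i$ with $T_i^{-1}$). The base case $\ell(w) = 1$ is Proposition \ref{prop:dualsimpleroot}: $f^{\up}(\alpha_i)^{[c]} = q_i^{c(c-1)/2} f_i^c = G^{\up}(\tilde{f}_i^c u_{\infty})$. For the inductive step with $\bm{i} = (i_1, \ldots, i_\ell)$ and $\bm{i}_{\geq 2} = (i_2, \ldots, i_\ell)$, I would use the factorization stated just above the theorem,
\[
f^{\up}(\bm{c}, \bm{i}) = (1-q_{i_1}^2)^{\langle h_{i_1}, \xi(\bm{c}_{\geq 2}, \bm{i}_{\geq 2})\rangle} \, f^{\up}(\beta_{\bm{i},1})^{[c_1]} \cdot T_{i_1}\bigl(f^{\up}(\bm{c}_{\geq 2}, \bm{i}_{\geq 2})\bigr).
\]
By the inductive hypothesis applied to $w_{\geq 2} = s_{i_2}\cdots s_{i_\ell}$, the factor $f^{\up}(\bm{c}_{\geq 2}, \bm{i}_{\geq 2})$ expands as $G^{\up}(b_+(\bm{c}_{\geq 2}, \bm{i}_{\geq 2}))$ plus lexicographically smaller $q\mathbb{Z}[q]$-multiples of $G^{\up}(b')$ for $b' \in \mathscr{B}(\leq w_{\geq 2}, +1)$. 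Each such $b'$ satisfies $\varepsilon_{i_1}^{*}(b') = 0$: the subalgebra $\Uq^{-}(\leq w_{\geq 2}, +1)$ lies in $T_{i_1}^{-1}\Uq^{-}$ because $T_{i_1}$ carries its PBW generators $T_{i_2}\cdots T_{i_{k-1}}(f_{i_k})$ back into $\Uq^{-}$, hence sits in $\Ker(r_{i_1})$ by Proposition \ref{prop:braidcrystal}, and Corollary \ref{cor:dualcan-simple} forces $\varepsilon_{i_1}^{*}(b') = 0$.

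With $\varepsilon_{i_1}^{*}(b') = 0$ secured, Theorem \ref{thm:braidcanbasis}(1) yields $T_{i_1} G^{\up}(b') = (1-q_{i_1}^2)^{-\langle h_{i_1}, \wt b'\rangle} G^{\up}(\sigma_{i_1} b')$, and left multiplication by $f^{\up}(\beta_{\bm{i},1})^{[c_1]} = f_{i_1}^{\{c_1\}}$ is handled by Theorem \ref{thm:multdualChevalley}(a). The leading term works out to $G^{\up}(\tilde{f}_{i_1}^{c_1} \sigma_{i_1} b_+(\bm{c}_{\geq 2}, \bm{i}_{\geq 2})) = G^{\up}(b_+(\bm{c}, \bm{i}))$ by the recursive crystal description of the PBW-crystal element (a direct consequence of Saito's reflection formula and the factorization of $f(\bm{c}, \bm{i})$), while the remaining terms lie in $q\mathbb{Z}[q] \cdot G^{\up}(b'')$ for $b'' \in \mathscr{B}(\leq w, +1)$ with strictly smaller PBW coordinates in the left lexicographic order.

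\textbf{Main obstacle.} The delicate part is the scalar bookkeeping. One must check that $(1-q_{i_1}^2)^{\langle h_{i_1}, \xi(\bm{c}_{\geq 2}, \bm{i}_{\geq 2})\rangle}$ exactly cancels $(1-q_{i_1}^2)^{-\langle h_{i_1}, \wt b_+(\bm{c}_{\geq 2}, \bm{i}_{\geq 2})\rangle}$ from the braid action (which is automatic once one observes $\wt b_+(\bm{c}_{\geq 2}, \bm{i}_{\geq 2}) = \xi(\bm{c}_{\geq 2}, \bm{i}_{\geq 2})$), and that the twist $q_{i_1}^{c_1(c_1-1)/2}$ inside $f_{i_1}^{\{c_1\}}$ combines with $q_{i_1}^{-c_1 \varepsilon_{i_1}(\sigma_{i_1} b_+(\bm{c}_{\geq 2}, \bm{i}_{\geq 2}))}$ from Theorem \ref{thm:multdualChevalley} to yield coefficient exactly $1$ on the leading term. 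A second delicate point is translating the strict inequality $\varepsilon_{i_1}(b'') < \varepsilon_{i_1}(\sigma_{i_1} b_+(\bm{c}_{\geq 2}, \bm{i}_{\geq 2})) + c_1$ produced by Theorem \ref{thm:multdualChevalley} into strict inequality of the first PBW coordinate, using the fact that for $b \in \mathscr{B}(\leq w, +1)$ the value $\varepsilon_{i_1}(b)$ coincides with the first coordinate $c_1$ in its $\bm{i}$-parametrization.
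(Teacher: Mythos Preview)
The paper does not prove this theorem; it is quoted from \cite{Kim:qunip} with the one-line hint ``using the Levendorskii--Soibelman formula and the definition of the dual canonical basis.'' That route is: (i) $f^{\up}_\epsilon(\bm c,\bm i)\in\mathscr{L}(\infty)$ with image $b_\epsilon(\bm c,\bm i)$ (Theorem~\ref{thm:PBW}); (ii) the LS commutation relations force $\sigma\bigl(f^{\up}_\epsilon(\bm c,\bm i)\bigr)$ to lie in the $\mathcal A$--span of $\{f^{\up}_\epsilon(\bm c',\bm i):\bm c'\le\bm c\}$; then (iii) the standard $\sigma$--invariance characterisation of $G^{\up}$ gives the unitriangular expansion inside that span, hence inside $\Uq^{-}(\le w,\epsilon)$.

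Your inductive braid argument is different and, as written, has a gap. The step ``the remaining terms lie in $q\mathbb Z[q]\cdot G^{\up}(b'')$ for $b''\in\mathscr{B}(\le w,+1)$'' is exactly the assertion of part~(1), which you say you will deduce from~(2). Concretely: when you apply Theorem~\ref{thm:multdualChevalley} to $f_{i_1}^{\{c_1\}}G^{\up}(\sigma_{i_1}b')$, the error terms $G^{\up}(b'')$ range over \emph{all} of $\mathscr{B}(\infty)$ with $\varepsilon_{i_1}(b'')<c_1$; nothing in that theorem forces $b''\in\mathscr{B}(\le w,+1)$. You correctly observe that the product lies in the subalgebra $\Uq^{-}(\le w,+1)$, but that only helps once you already know the subalgebra is spanned by dual canonical basis elements, which is (1). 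So your argument establishes the \emph{weak} unitriangularity
\[
f^{\up}(\bm c,\bm i)\in G^{\up}(b_+(\bm c,\bm i))+\sum_{L_+(b',\bm i)<\bm c}q\mathbb Z[q]\,G^{\up}(b'),\qquad b'\in\mathscr{B}(\infty),
\]
which is precisely Theorem~\ref{thm:dualPBWmult} specialised to $b=u_\infty$, but not the stronger statement that only $b'=b_+(\bm c',\bm i)$ occur. A dimension count within a weight space does not close this, because the rectangular transition matrix to all of $\mathscr{B}(\infty)_\xi$ can have nonzero entries outside the square $b_+(C_\xi)$--block while that block is still unitriangular.

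To repair the argument you need one extra ingredient showing that the $\mathcal A$--span of $\{f^{\up}_\epsilon(\bm c',\bm i):\bm c'\le\bm c\}$ is $\sigma$--stable (this is exactly where the LS formula enters in \cite{Kim:qunip}), after which the balanced-triple characterisation of $G^{\up}$ pins each $G^{\up}(b_+(\bm c,\bm i))$ inside that span and your unitriangularity becomes the strong one.
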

\begin{rem}
In symmetric case, we note that it can be shown that 
\[
d_{\bm{c},\bm{c}^{'}}^{\bm{i}}\left(q\right)=\left(f_{\epsilon}^{\up}\left(\bm{c},\bm{i}\right),G^{\low}\left(b_{\epsilon}\left(\bm{c}^{'},\bm{i}\right)\right)\right)\in q\mathbb{Z}_{\geq0}\left[q\right],
\]
by the positivity of the (twisted) comultiplication with respect to
the canonical basis and Proposition \ref{prop:dualroot}.

In particular we obtain a proof of the positivity of the transition
matrix from the canonical basis into the lower \PBW basis in simply-laced
type for arbitrary reduced word of the longest element $w_{0}$ using
the orthogonality of the the (lower) \PBW basis.

For ``adapted'' reduced words, it was proved by Lusztig \cite[Corollary 10.7]{MR1035415}.
For an arbitrary reduced word, it is proved by Kato \cite[Theorm 4.17]{MR3165425}
using the categorification of \PBW basis via Khovanov-Lauda-Rouquier
algebra. It is also proved by Oya \cite[Theorem 5.2]{Oya:qfa}.
\end{rem}
\begin{NB}

\subsection{Integrality of the inner product}

Since $\mathbf{B}^{\mathrm{up}}\left(\leq w,\epsilon\right):=\mathbf{B}^{\mathrm{up}}\cap\Uq^{-}\left(\leq w,\epsilon\right)$
is a $\mathbb{Q}\left(q\right)$-basis of $\Uq^{-}\left(\leq w,\epsilon\right)$,
we call it the dual canonical basis of $\Uq^{-}\left(\leq w,\epsilon\right)$.
\begin{cor}
We have $\left(\mathbf{B}^{\mathrm{up}}\left(\leq w,\epsilon\right),\mathbf{B}^{\mathrm{up}}\left(\leq w,\epsilon\right)\right)_{L}\subset\mathbb{Z}\left[q^{\pm1}\right]$.\end{cor}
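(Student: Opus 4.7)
My plan is to reduce the statement to a rank-one calculation by using the unitriangular transition between the dual canonical basis and the dual \PBW basis provided by Theorem \ref{thm:qunip}, and then to exploit the orthogonality of the \PBW basis with respect to $(\cdot,\cdot)_{L}$ (which I interpret as Lusztig's normalization of the bilinear form, under which $(f_{i},f_{i})_{L}=(1-q_{i}^{2})^{-1}$).

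First, fix a reduced word $\bm{i}\in I(w)$. By Theorem \ref{thm:qunip} (2), the matrix expressing $\{f_{\epsilon}^{\up}(\bm{c},\bm{i})\}_{\bm{c}}$ in terms of $\{G^{\up}(b_{\epsilon}(\bm{c},\bm{i}))\}_{\bm{c}}$ is unitriangular with respect to the lexicographic order on $\mathbb{Z}_{\geq0}^{\ell}$ and has off-diagonal entries $d_{\bm{c},\bm{c}'}^{\bm{i}}(q)\in q\mathbb{Z}[q]$. Inverting this unitriangular system over $\mathbb{Z}[q]$, each dual canonical basis element $G^{\up}(b_{\epsilon}(\bm{c},\bm{i}))$ becomes a $\mathbb{Z}[q]$-linear combination of the dual \PBW elements $\{f_{\epsilon}^{\up}(\bm{c}',\bm{i})\}_{\bm{c}'\leq\bm{c}}$. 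Therefore it suffices to prove
\[
(f_{\epsilon}^{\up}(\bm{c},\bm{i}),f_{\epsilon}^{\up}(\bm{c}',\bm{i}))_{L}\in\mathbb{Z}[q^{\pm1}]\qquad\text{for all }\bm{c},\bm{c}'\in\mathbb{Z}_{\geq0}^{\ell}.
\]

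Second, I would invoke the orthogonality of the \PBW basis with respect to $(\cdot,\cdot)_{L}$: this is a standard consequence of Proposition \ref{prop:braidcrystal} combined with the fact that the braid symmetries restrict to isometries between $\Uq^{-}\cap T_{i}\Uq^{-}$ and $\Uq^{-}\cap T_{i}^{-1}\Uq^{-}$. An induction on $\ell(w)$ using the twisted comultiplication then gives that the Gram matrix in the dual \PBW basis is diagonal and factorizes,
\[
(f_{\epsilon}^{\up}(\bm{c},\bm{i}),f_{\epsilon}^{\up}(\bm{c}',\bm{i}))_{L}=\delta_{\bm{c},\bm{c}'}\prod_{k=1}^{\ell}(f_{i_{k}}^{\up(c_{k})},f_{i_{k}}^{\up(c_{k})})_{L},
\]
after using braid-equivariance to move each rank-one factor back to the simple root setting.

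The main obstacle is the rank-one computation. With $(f_{i},f_{i})_{L}=(1-q_{i}^{2})^{-1}$, iterating the compatibility of the form with the twisted comultiplication gives $(f_{i}^{(c)},f_{i}^{(c)})_{L}=\prod_{k=1}^{c}(1-q_{i}^{2k})^{-1}$, and since the dual normalization inverts this scalar we obtain
\[
(f_{i}^{\up(c)},f_{i}^{\up(c)})_{L}=\prod_{k=1}^{c}(1-q_{i}^{2k})\in\mathbb{Z}[q^{\pm1}],
\]
which is exactly the required integrality at the rank-one level. I expect the most delicate point to be the bookkeeping of normalization constants: Theorem \ref{thm:qunip} and Proposition \ref{prop:dualroot} are formulated in terms of the Kashiwara form used throughout the paper, whereas the corollary involves Lusztig's form, and the two differ by a weight-dependent scalar; one must verify that this rescaling does not destroy the $\mathbb{Z}[q]$-unitriangularity of the transition matrix used in the first step, which is a direct but slightly tedious comparison of the two dual integral forms.
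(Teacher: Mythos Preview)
Your proposal is correct and follows essentially the same route as the paper's own proof, which is the terse two-liner ``$\left(F^{\up}(\bm{w}),F^{\up}(\bm{w})\right)_{L}\subset\mathbb{Z}[q^{\pm1}]$ by the computation, hence the integral recursion yields the result.'' In other words: first establish integrality on the dual \PBW basis via orthogonality and the rank-one calculation, then transfer it to the dual canonical basis via the unitriangular transition of Theorem~\ref{thm:qunip}~(2). You have simply filled in the details the paper leaves implicit.

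Your closing worry about normalizations is in fact harmless, and here is why. On a fixed weight space $-\xi=-\sum_{i}\xi_{i}\alpha_{i}$, the Lusztig and Kashiwara forms differ by the global scalar $\prod_{i}(1-q_{i}^{2})^{-\xi_{i}}$ (this follows from the multiplicativity of that scalar in $\xi$ and the fact that both forms satisfy the same recursion $(r(x),y_{1}\otimes y_{2})=(x,y_{1}y_{2})$). Consequently both the dual \PBW element $f_{\epsilon}^{\up}(\bm{c},\bm{i})$ and the dual canonical element $G^{\up}(b_{\epsilon}(\bm{c},\bm{i}))$ get rescaled by the \emph{same} factor when one passes from one form to the other, so the transition matrix $d_{\bm{c},\bm{c}'}^{\bm{i}}(q)$ is literally unchanged and remains in $q\mathbb{Z}[q]$. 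No ``tedious comparison'' is needed.
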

\begin{proof}
We have $\left(F^{\up}\left(\bm{w}\right),F^{\up}\left(\bm{w}\right)\right)_{L}\subset\mathbb{Z}\left[q^{\pm1}\right]$
by the computation, hence the integral recursion yields the result.
\end{proof}
\end{NB}

\begin{NB}

\subsection{Conjecture on quantum unipotent subgroup and its localization}

In \cite[5.1.3]{Kim:qunip}, we introduced quantum closed unipotent
cell $\mathcal{O}_{q}\left[\overline{N_{w}}\right]$ and proved the
existence of injective ring homomorphism $\Uq^{-}\left(\leq w,-1\right)_{\mathcal{A}}^{\up}\hookrightarrow\mathcal{O}_{q}\left[\overline{N_{w}}\right]$
in \cite[Theorem 5.13]{Kim:qunip}. We note that $\mathcal{O}_{q}\left[\overline{N_{w}}\right]$
is also tread in Berenstein-Rupel \cite{BR:Hall} as a image of the
Feigin map. This is a generalization of De Concini and Procesi's result
on finite type case.
\begin{conjecture}
\textup{(1)} After the localizations with respect to $\left\{ \Delta_{w\lambda,\lambda}\mid\lambda\in P_{+}\right\} $,
we have the induced isomorphism:
\[
\Uq^{-}\left(\leq w,-1\right)_{\mathcal{A}}^{\up}\left[\Delta_{w\lambda,\lambda}^{-1}\mid\lambda\in P_{+}\right]\xrightarrow{\sim}\mathcal{O}_{q}\left[\overline{N_{w}}\right]\left[\Delta_{w\lambda,\lambda}^{-1}\mid\lambda\in P_{+}\right].
\]

\textup{(2)} After the localization with with respect to $\left\{ \Delta_{w\lambda,\lambda}\mid\lambda\in P_{+}\right\} $,
we have the identification of the dual canonical bases in the both
hand side.
\end{conjecture}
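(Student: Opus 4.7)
The strategy is to extend the injection $\Uq^{-}(\leq w,-1)_{\mathcal{A}}^{\up}\hookrightarrow\mathcal{O}_{q}[\overline{N_{w}}]$ of Theorem 5.13 of \cite{Kim:qunip} to an isomorphism after inverting the $\Delta_{w\lambda,\lambda}$, and then to read off the identification of dual canonical bases from how the frozen minors $\Delta_{w\lambda,\lambda}$ multiply against $\mathbf{B}^{\up}$. Injectivity of the localized map is immediate from injectivity of the original map together with flatness of Ore localization, so the entire content lies in verifying the Ore condition and then in surjectivity.

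The Ore condition on $S=\{q^{k}\prod_{i}\Delta_{w\lambda_{i},\lambda_{i}}^{n_{i}}\}$ reduces to the quasi-commutation relation $\Delta_{w\lambda,\lambda}\,G^{\up}(b)=q^{N(\lambda,b)}\,G^{\up}(b)\,\Delta_{w\lambda,\lambda}$ for every $b\in\mathscr{B}(\leq w,-1)$, then transported along the injection to $\mathcal{O}_{q}[\overline{N_{w}}]$. I would extract this quasi-commutation from Theorem \ref{thm:qunip}: choose a reduced word $\bm{i}\in I(w)$ whose initial segment realizes $\Delta_{w\lambda,\lambda}$ as a monomial in the dual root vectors $f_{-1}^{\up}(\beta_{\bm{i},k})$, then apply the Levendorskii--Soibelman formula together with the unitriangular change of basis between the dual PBW basis and $\mathbf{B}^{\up}$ to upgrade a PBW-level $q$-commutation to a dual-canonical-basis-level one.

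For surjectivity, the central claim is that for each dual canonical basis element $G^{\up}(b)\in\mathcal{O}_{q}[\overline{N_{w}}]$ there exist $\lambda\in P_{+}$ and $n\geq 0$ with $\Delta_{w\lambda,\lambda}^{n}\,G^{\up}(b)$ in the image of $\Uq^{-}(\leq w,-1)_{\mathcal{A}}^{\up}$. I would show that left multiplication by $\Delta_{w\lambda,\lambda}$ acts on $\mathbf{B}^{\up}\cap\mathcal{O}_{q}[\overline{N_{w}}]$ in a unitriangular fashion with an invertible leading term: there is an injection $\phi_{\lambda}$ of the indexing set into itself such that $\Delta_{w\lambda,\lambda}\,G^{\up}(b)\equiv q^{m(\lambda,b)}\,G^{\up}(\phi_{\lambda}(b))$ modulo lex-lower dual canonical basis terms attached to a fixed reduced word. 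Iterating $n$ times moves the leading component into $\mathscr{B}(\leq w,-1)$ by a crystal argument that mirrors the classical Bruhat decomposition $\overline{N_{w}}\setminus N_{w}=\bigcup_{\lambda}\{\Delta_{w\lambda,\lambda}=0\}$, and the lex-lower terms are handled by induction on the same lexicographic order as in Theorem \ref{thm:qunip}. Part (2) falls out in parallel: the maps $\phi_{\lambda}$ together with the scalars $q^{m(\lambda,b)}$ identify the dual canonical basis of $\mathcal{O}_{q}[\overline{N_{w}}][\Delta^{-1}]$ with the set of expressions $q^{*}\,G^{\up}(b)\prod_{i}\Delta_{w\lambda_{i},\lambda_{i}}^{-n_{i}}$ for $b\in\mathscr{B}(\leq w,-1)$ and finitely supported $(n_{i})$.

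The main obstacle is producing $\phi_{\lambda}$ and controlling the lex-lower terms in Kac--Moody generality: in finite type one can shortcut to $w=w_{0}$, where $\mathcal{O}_{q}[\overline{N_{w_{0}}}]\cong\Uq^{-}$ and the classical De Concini--Procesi argument applies directly, but in the symmetrizable Kac--Moody case no such reduction is available. I expect the required ingredient to be a quantum $T$-system of \GLS type relating the $\Delta_{w\lambda,\lambda}$ among themselves and to the rest of $\mathbf{B}^{\up}$, combined with Saito's crystal reflections $\hat{\sigma}_{i},\hat{\sigma}_{i}^{*}$ and the crystal decomposition of $\mathscr{B}(\infty)$ attached to $w$ already used in the proof of Theorem \ref{thm:dec}.
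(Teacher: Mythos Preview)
There is no proof in the paper for you to be compared against: the statement you are attempting is explicitly labeled a \emph{Conjecture}, not a theorem, and moreover it sits inside an \texttt{NB} block that the paper suppresses via \verb|\excludeversion{NB}|, so it does not even appear in the compiled article. The author records it as an open problem connected to \cite[5.1.3, Theorem 5.13]{Kim:qunip}; nothing in the present paper claims to establish it.

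As for the proposal itself, it is a strategy sketch rather than a proof, and you say as much. The Ore-localization and injectivity step is fine and standard. The substantive claim is the existence of the map $\phi_{\lambda}$ and the unitriangularity of left multiplication by $\Delta_{w\lambda,\lambda}$ on $\mathbf{B}^{\up}\cap\mathcal{O}_{q}[\overline{N_{w}}]$ with controllable lex-lower terms; you correctly identify this as the main obstacle and do not supply an argument, only an expectation that a quantum $T$-system of Gei\ss--Leclerc--Schr\"oer type together with Saito reflections should furnish it. That expectation is reasonable in symmetric type in light of \cite{GLS:qcluster} and the monoidal categorification results, but in general symmetrizable Kac--Moody type it is genuinely open, which is precisely why the paper states this as a conjecture. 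In short: your outline is plausible and aligned with how experts think about the problem, but the key lemma you need is not proved here or in the paper, and until it is, the statement remains conjectural.
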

\end{NB}

\section{Proof of the surjectivity}

\subsection{Multiplication formula for $\Uq^{-}\left(\leq w,\epsilon\right)$.}

For a Weyl group element $w$, a reduced word $\bm{i}\in I\left(w\right)$
and $0\leq p<\ell$, we consider a subalgebra which is generated by
$\left\{ f_{\epsilon}^{\up}\left(\beta_{\bm{i},k}\right)\right\} _{p+1\leq k\leq\ell}$,
then it can be shown that this subalgebra is also compatible with
the dual canonical basis. This can be proved using the transition
matrix between the dual \PBW basis and the dual canonical basis.

In this subsection, we give statements for the $\epsilon=+1$ case.
We can obtain the corresponding claims for $\epsilon=-1$ case by
applying the $*$-involution. So we denote $f_{\epsilon}^{\up}\left(\beta_{\bm{i},k}\right)$,
$f_{\epsilon}^{\up}\left(\bm{c},\bm{i}\right)$ , $b_{\epsilon}\left(\bm{c},\bm{i}\right)$
by $f^{\up}\left(\beta_{\bm{i},k}\right)$, $f^{\up}\left(\bm{c},\bm{i}\right)$,
$b\left(\bm{c},\bm{i}\right)$ by omitting $\epsilon$.
\begin{prop}
Let $w\in W$ and $\bm{i}\in I\left(w\right)$. For $\bm{c}\in\mathbb{Z}_{\geq0}^{\ell}$
and $0\leq p<\ell$, we set 
\begin{align*}
\tau_{\leq p}\left(\bm{c}\right) & :=\left(c_{1},\cdots,c_{p},0,\cdots,0\right)\in\mathbb{Z}_{\geq0}^{\ell},\\
\tau_{>p}\left(\bm{c}\right) & :=\left(0,\cdots,0,c_{p+1},\cdots,c_{\ell}\right)\in\mathbb{Z}_{\geq0}^{\ell},
\end{align*}
then we have 
\[
G^{\up}\left(b\left(\tau_{\leq p}\left(\bm{c}\right)\right),\bm{i}\right)G^{\up}\left(b\left(\tau_{>p}\left(\bm{c}\right),\bm{i}\right)\right)\in G^{\up}\left(b\left(\bm{c},\bm{i}\right)\right)+\sum_{\bm{d}<\bm{c}}q\mathbb{Z}\left[q\right]G^{\up}\left(b\left(\bm{d},\bm{i}\right)\right).
\]
\end{prop}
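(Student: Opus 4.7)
The plan is to exploit the explicit product formula for the dual PBW basis before Theorem \ref{thm:qunip} together with the unitriangular transition between dual PBW and dual canonical bases, inverting that transition while carefully tracking the $q\mathbb{Z}[q]$-divisibility of the off-diagonal entries.

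First, from the factorised formula for $f^{\up}(\bm{c},\bm{i})$ (recorded right before Theorem \ref{thm:qunip}) we get immediately
\[
f^{\up}(\bm{c},\bm{i}) \;=\; f^{\up}(\tau_{\leq p}(\bm{c}),\bm{i})\cdot f^{\up}(\tau_{>p}(\bm{c}),\bm{i}),
\]
because the two factors on the right are just the ordered products of $f^{\up}(\beta_{\bm{i},k})^{[c_k]}$ for $k\leq p$ and $k>p$ respectively. Next, by inverting the unitriangular matrix in Theorem \ref{thm:qunip}, whose off-diagonal entries lie in $q\mathbb{Z}[q]$ (so that its inverse enjoys the same property), I would write
\[
G^{\up}(b(\tau_{\leq p}(\bm{c}),\bm{i})) \;=\; f^{\up}(\tau_{\leq p}(\bm{c}),\bm{i}) + \sum_{\bm{a}^{L}<\tau_{\leq p}(\bm{c})} e_{\bm{a}^{L}}(q)\,f^{\up}(\bm{a}^{L},\bm{i}),
\]
and an analogous identity for $G^{\up}(b(\tau_{>p}(\bm{c}),\bm{i}))$, all coefficients $e(q)$ in $q\mathbb{Z}[q]$. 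A one-line left-lex check gives a crucial support restriction: if the first $p$ (respectively the last $\ell-p$) coordinates of an index $\bm{c}^{\prime}$ vanish, then every $\bm{a}<\bm{c}^{\prime}$ in left-lex order also has those same coordinates vanishing. Hence the sum for $\tau_{\leq p}(\bm{c})$ runs only over $\bm{a}^{L}$ supported in $\{1,\dots,p\}$, and the sum for $\tau_{>p}(\bm{c})$ only over $\bm{a}^{R}$ supported in $\{p+1,\dots,\ell\}$.

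Multiplying the two expansions, and using $f^{\up}(\bm{a}^{L},\bm{i})\cdot f^{\up}(\bm{a}^{R},\bm{i})=f^{\up}(\bm{a}^{L}+\bm{a}^{R},\bm{i})$ (product formula again, since the supports are disjoint), yields
\[
G^{\up}(b(\tau_{\leq p}(\bm{c}),\bm{i}))\,G^{\up}(b(\tau_{>p}(\bm{c}),\bm{i})) \;=\; f^{\up}(\bm{c},\bm{i}) + \sum_{\bm{d}<\bm{c}} a_{\bm{d}}(q)\,f^{\up}(\bm{d},\bm{i}),
\]
with each $a_{\bm{d}}(q)\in q\mathbb{Z}[q]$; here $\bm{d}<\bm{c}$ because at least one of the two blocks of $\bm{d}$ is strictly smaller than the corresponding block of $\bm{c}$ in block-lex, which forces $\bm{d}<\bm{c}$ in the full left-lex order. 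Re-expanding each $f^{\up}(\bm{d},\bm{i})$ via Theorem \ref{thm:qunip} produces the dual canonical basis expansion: the coefficient of $G^{\up}(b(\bm{c},\bm{i}))$ is $1$, while the coefficient of $G^{\up}(b(\bm{d}',\bm{i}))$ for any $\bm{d}^{\prime}<\bm{c}$ is a sum whose contributions are either a single element of $q\mathbb{Z}[q]$ (coming from $f^{\up}(\bm{c},\bm{i})$) or a product $a_{\bm{d}}(q)\cdot d^{\bm{i}}_{\bm{d},\bm{d}'}(q)\in q\mathbb{Z}[q]\cdot q\mathbb{Z}[q]\subset q\mathbb{Z}[q]$; this is exactly the desired form.

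There is no real obstacle beyond the bookkeeping. The steps to verify carefully are: (i) the product factorisation of the dual PBW basis on disjoint supports, (ii) the preservation of zero-block supports under the left-lex order (which simultaneously shows the compatibility of both subalgebras generated by $\{f^{\up}(\beta_{\bm{i},k})\}_{k\leq p}$ and $\{f^{\up}(\beta_{\bm{i},k})\}_{k>p}$ with the dual canonical basis), and (iii) the stability of the $q\mathbb{Z}[q]$-unitriangular property under inversion. Each is elementary but must be combined cleanly to extract the $q\mathbb{Z}[q]$ coefficient estimate.
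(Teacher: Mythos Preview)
Your overall strategy matches the paper's proof: expand both factors in the dual PBW basis, multiply, and convert back. However, your ``one-line left-lex check'' contains an error that breaks the argument as written. You claim that if the last $\ell-p$ coordinates of $\bm{c}'$ vanish, then every $\bm{a}<\bm{c}'$ in the left lexicographic order also has its last $\ell-p$ coordinates vanishing. This is false: take $\ell=2$, $p=1$, $\bm{c}'=(1,0)$, $\bm{a}=(0,1)$; then $\bm{a}<\bm{c}'$ but $a_{2}\neq 0$. (Your companion claim for the \emph{first} $p$ coordinates is fine, and it does handle the $\tau_{>p}$ expansion.) Without this support restriction on the $\bm{a}^{L}$, the product $f^{\up}(\bm{a}^{L},\bm{i})\cdot f^{\up}(\bm{a}^{R},\bm{i})$ is no longer simply $f^{\up}(\bm{a}^{L}+\bm{a}^{R},\bm{i})$, and the rest of your computation does not go through.

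The fix is exactly what the paper invokes: the Levendorskii--Soibelman formula. It guarantees that the subalgebra generated by $\{f^{\up}(\beta_{\bm{i},k})\}_{k\leq p}$ is spanned by the PBW monomials $f^{\up}(\bm{a},\bm{i})$ with $\bm{a}$ supported in $\{1,\dots,p\}$; since $G^{\up}(b(\tau_{\leq p}(\bm{c}),\bm{i}))$ lies in this subalgebra (apply Theorem~\ref{thm:qunip} to the prefix word $(i_{1},\dots,i_{p})$), its dual PBW expansion is automatically supported there. Once you replace the faulty lex-order claim with this algebraic input, your argument becomes correct and essentially identical to the paper's.
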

\begin{proof}
By the transition from the dual canonical basis to the dual \PBW
basis, we have 
\begin{align*}
G^{\up}\left(b\left(\tau_{\leq p}\left(\bm{c}\right)\right),\bm{i}\right) & \in f^{\up}\left(\tau_{\leq p}\left(\bm{c}\right),\bm{i}\right)+\sum_{\bm{d}_{\leq p}\leq\tau_{\leq p}\left(\bm{c}\right)}q\mathbb{Z}\left[q\right]f^{\up}\left(\bm{d}_{\leq p},\bm{i}\right),\\
G^{\up}\left(b\left(\tau_{>p}\left(\bm{c}\right),\bm{i}\right)\right) & \in f^{\up}\left(\tau_{>p}\left(\bm{c}\right),\bm{i}\right)+\sum_{\bm{d}_{>p}\leq\tau_{>p}\left(\bm{c}\right)}q\mathbb{Z}\left[q\right]f^{\up}\left(\bm{d}_{>p},\bm{i}\right).
\end{align*}
and we note that we have $\bm{d}_{\leq p}=\tau_{\leq p}\left(\bm{d}_{\leq p}\right)$
and $\bm{d}_{>p}=\tau_{>p}\left(\bm{d}_{>p}\right)$ by the Levendorskii-Soibelman
formula in the right hand sides.

Hence in the product of the right hand side, we have the following
4 kinds of terms:
\begin{align*}
f^{\up}\left(\bm{c},\bm{i}\right) & =f^{\up}\left(\tau_{\leq p}\left(\bm{c}\right),\bm{i}\right)f^{\up}\left(\tau_{>p}\left(\bm{c}\right),\bm{i}\right),\\
f^{\up}\left(\tau_{\leq p}\left(\bm{c}\right)+\bm{d}_{>p},\bm{i}\right) & =f^{\up}\left(\tau_{\leq p}\left(\bm{c}\right),\bm{i}\right)f^{\up}\left(\bm{d}_{>p},\bm{i}\right),\\
f^{\up}\left(\tau_{>p}\left(\bm{c}\right)+\bm{d}_{\leq p},\bm{i}\right) & =f^{\up}\left(\bm{d}_{\leq p},\bm{i}\right)f^{\up}\left(\tau_{>p}\left(\bm{c}\right),\bm{i}\right),\\
f^{\up}\left(\bm{d}_{\leq p}+\bm{d}_{>p},\bm{i}\right) & =f^{\up}\left(\bm{d}_{\leq p},\bm{i}\right)f^{\up}\left(\bm{d}_{>p},\bm{i}\right).
\end{align*}

We note that $\tau_{\leq p}\left(\bm{c}\right)+\bm{d}_{>p}<_{\bm{w}}\bm{c}$,
$\tau_{>p}\left(\bm{c}\right)+\bm{d}_{\leq p}<_{\bm{w}}\bm{c}$, $\bm{d}_{\leq p}+\bm{d}_{>p}<_{\bm{w}}\bm{c}$
by construction, so we have $\tau_{\leq p}\left(\bm{c}\right)+\bm{d}_{>p}<\bm{c}$,
$\tau_{>p}\left(\bm{c}\right)+\bm{d}_{\leq p}<\bm{c}$ and $\bm{d}_{\leq p}+\bm{d}_{>p}<_{\bm{w},p}\bm{c}$.
Hence, using the transition from the dual \PBW basis to the dual
canonical basis, we obtain the claim.
\end{proof}

\subsection{Compatibility of $\Uq^{-}\left(>w,\epsilon\right)$}

For a Weyl group element, we consider the co-finite subset $\Delta_{+}\cap w\Delta_{+}$
and corresponding quantum coordinate ring $\Uq^{-}\left(>w,\epsilon\right)$.
\begin{defn}
For $w\in W$ and $\epsilon\in\left\{ \pm1\right\} $, we set 
\[
\Uq^{-}\left(>w,\epsilon\right)=\Uq^{-}\cap T_{w^{\epsilon}}^{\epsilon}\Uq^{-}.
\]

\end{defn}
Using Proposition \ref{prop:braidcrystal} iteratively, we obtain
the following lemma.

The following is the main result in this subsection.
\begin{thm}
For $w\in W$ and $\epsilon\in\left\{ \pm1\right\} $, $\Uq^{-}\left(>w,\epsilon\right)$
is compatible with the dual canonical basis, $\mathbf{B}^{\mathrm{up}}\left(>w,\epsilon\right):=\mathbf{B}^{\mathrm{up}}\cap\Uq^{-}\left(>w,\epsilon\right)$
is a $\mathbb{Q}\left(q\right)$-basis of $\Uq^{-}\left(>w,\epsilon\right)$.
\end{thm}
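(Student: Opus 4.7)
The plan is to induct along a reduced word $\bm{i} = (i_1, \ldots, i_\ell)$ of $w$, iterating the braid-crystal compatibility encoded in Proposition \ref{prop:braidcrystal} and Theorem \ref{thm:braidcanbasis}. First I reduce to $\epsilon = +1$: since $*$ is an anti-involution of $\Uq^-$ that permutes $\mathbf{B}^{\up}$ and satisfies $* \circ T_i^{-1} \circ * = T_i$, a direct computation gives $*\bigl(\Uq^- \cap T_w \Uq^-\bigr) = \Uq^- \cap T_{w^{-1}}^{-1} \Uq^- = \Uq^-(>w, -1)$, so compatibility in the $\epsilon = +1$ case for every Weyl element implies compatibility in the $\epsilon = -1$ case. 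Assuming $\epsilon = +1$ henceforth, for $b \in \mathscr{B}(\infty)$ I set $b^{(0)} := b$ and, whenever defined, $b^{(k)} := \sigma_{i_k}^*(b^{(k-1)})$, and I define
\[
\mathscr{B}(>w) := \{ b \in \mathscr{B}(\infty) : L(b, \bm{i}) = 0 \} = \bigl\{ b : \varepsilon_{i_k}(b^{(k-1)}) = 0 \text{ for } k = 1, \ldots, \ell \bigr\}.
\]
The goal is to prove $\Uq^- \cap T_w \Uq^- = \bigoplus_{b \in \mathscr{B}(>w)} \mathbb{Q}(q)\, G^{\up}(b)$.

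For the inclusion $\supseteq$, fix $b \in \mathscr{B}(>w)$ and iterate Theorem \ref{thm:braidcanbasis}(2): the condition $\varepsilon_{i_1}(b) = 0$ gives $T_{i_1}^{-1} G^{\up}(b) = \gamma_{b,1}\, G^{\up}(b^{(1)}) \in \Uq^-$ for a nonzero scalar $\gamma_{b,1} \in \mathbb{Q}(q)$; the condition $\varepsilon_{i_2}(b^{(1)}) = 0$ then lets me apply $T_{i_2}^{-1}$ to obtain $T_{i_2}^{-1}T_{i_1}^{-1}G^{\up}(b) = \gamma_{b,2}\, G^{\up}(b^{(2)}) \in \Uq^-$, and so on. After $\ell$ iterations, $T_w^{-1} G^{\up}(b) \in \Uq^-$, hence $G^{\up}(b) \in \Uq^- \cap T_w \Uq^-$.

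For the reverse inclusion $\subseteq$, I invoke part~(1) of the preceding theorem in the introduction (the intersection formula), namely
\[
\Uq^- \cap T_w \Uq^- = \bigcap_{k=1}^{\ell} U_k, \qquad U_k := \Uq^- \cap T_{i_1} \cdots T_{i_k} \Uq^-,
\]
and prove by induction on $k$ that $U^{(k)} := U_1 \cap \cdots \cap U_k$ equals $\bigoplus_{b \in \mathscr{B}^{(k)}} \mathbb{Q}(q)\, G^{\up}(b)$, where $\mathscr{B}^{(k)} := \{ b : \varepsilon_{i_j}(b^{(j-1)}) = 0 \text{ for all } j \leq k \}$. The case $k = 0$ reads $U^{(0)} = \Uq^-$. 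For the inductive step, take $x \in U^{(k)} \subseteq U^{(k-1)}$ and expand $x = \sum_{b \in \mathscr{B}^{(k-1)}} c_b\, G^{\up}(b)$; the iteration of the previous paragraph gives
\[
y := T_{i_{k-1}}^{-1} \cdots T_{i_1}^{-1} x = \sum_b c_b\, \gamma_{b,k-1}\, G^{\up}(b^{(k-1)}) \in \Uq^-.
\]
The extra condition $x \in U_k$ requires $T_{i_k}^{-1} y \in \Uq^-$, i.e.\ $y \in \Uq^- \cap T_{i_k} \Uq^-$, which by Corollary \ref{cor:dualcan-simple} forces $c_b = 0$ whenever $\varepsilon_{i_k}(b^{(k-1)}) \ne 0$; thus $x \in \bigoplus_{b \in \mathscr{B}^{(k)}} \mathbb{Q}(q)\, G^{\up}(b)$. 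The reverse inclusion at step $k$ is the same iteration applied up to index $k$. Setting $k = \ell$ gives $\mathscr{B}^{(\ell)} = \mathscr{B}(>w)$ and finishes the proof.

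The main obstacle is the intersection formula invoked in the $\subseteq$ direction: it is not automatic that $T_w^{-1} x \in \Uq^-$ implies each intermediate braid image $T_{i_k}^{-1} \cdots T_{i_1}^{-1} x$ also lies in $\Uq^-$, since non-$\Uq^-$ contributions from successive braid factors could in principle cancel only at the final stage. I would prove this formula by a parallel induction on $\ell(w)$: peel off $T_{i_1}$ via the orthogonal decomposition $\Uq^- = (\Uq^- \cap T_{i_1}\Uq^-) \oplus f_{i_1}\Uq^-$ from Proposition \ref{prop:braidcrystal}(2), write $x \in \Uq^- \cap T_w \Uq^-$ as $x_1 + f_{i_1} y$ with $x_1 \in \Uq^- \cap T_{i_1}\Uq^-$ and $y \in \Uq^-$, and use the $q$-boson relations together with the braid-canonical compatibility to force the $f_{i_1} y$ piece to vanish, thereby reducing to the shorter Weyl element $s_{i_1} w$.
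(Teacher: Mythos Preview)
Your overall architecture matches the paper's: reduce to $\epsilon=+1$ via $*$, establish the intersection formula $\Uq^{-}\cap T_{w}\Uq^{-}=\bigcap_{k}\Uq^{-}\cap T_{i_{1}}\cdots T_{i_{k}}\Uq^{-}$ (Proposition~\ref{prop:cofinsubgroup}), and then induct along the reduced word using Corollary~\ref{cor:dualcan-simple} and Theorem~\ref{thm:braidcanbasis} to peel off one braid generator at a time. Your step-by-step induction on $k$ through the $U^{(k)}$ is a valid reorganisation of the paper's induction on $\ell(w)$; the injectivity of $b\mapsto b^{(k-1)}$ on $\mathscr{B}^{(k-1)}$ that you implicitly use follows from the bijectivity of the Saito reflections.

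The one genuine gap is your sketch of the intersection formula. You correctly isolate this as the obstacle, but your proposed mechanism (``$q$-boson relations together with the braid-canonical compatibility'') is not what actually forces the $f_{i_{1}}$-component to vanish, and if ``braid-canonical compatibility'' means anything close to the statement being proved it would be circular. The paper's argument (Lemmas~\ref{lem:PBWdecomp} and the lemma following it) is different in kind: write $x=\sum_{c}f_{i_{1}}^{(c)}x_{c}$ with $x_{c}\in\Uq^{-}\cap T_{i_{1}}\Uq^{-}$, apply $T_{w}^{-1}T_{i_{1}}^{-1}$, and observe that the result lies in $\sum_{c,\bm{c}}T_{i_{\ell}}^{-1}\cdots T_{i_{1}}^{-1}(e_{i_{1}}^{(c)})\cdot T_{i_{\ell}}^{-1}\cdots T_{i_{2}}^{-1}(e_{i_{1}}^{(c_{1})})\cdots e_{i_{\ell}}^{(c_{\ell})}\cdot\Uq^{\leq0}$. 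Because $(i_{1},i_{1},\ldots,i_{\ell})$ is \emph{not} reduced but $(i_{1},i_{2},\ldots,i_{\ell})$ is, the relevant products of positive root vectors form a linearly independent PBW family over $\Uq^{\leq0}$ (Lusztig's PBW theorem and the triangular decomposition), and the hypothesis $T_{w}^{-1}x\in\Uq^{-}$ then kills every term with $c>0$. In other words, the vanishing comes from PBW linear independence in $\Uq^{+}$, not from any canonical-basis statement; you should replace your vague last paragraph with that argument.
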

The proof of this theorems occupies the rest of this subsection and
we give the characterization of the subset $\mathbf{B}^{\mathrm{up}}\left(>w,\epsilon\right)$.

\subsubsection{}

First, we give a variant of definition of $\Uq^{-}\left(>w,\epsilon\right)$
which depends on $\bm{i}=\left(i_{1},\cdots,i_{\ell}\right)\in I\left(w\right)$
which is suitable for the description of the dual canonical basis.
\begin{prop}
\label{prop:cofinsubgroup}For $w\in W$, $\bm{i}=\left(i_{1},\cdots,i_{\ell}\right)\in I\left(w\right)$
and $\epsilon\in\left\{ \pm1\right\} $, we have 
\[
\Uq^{-}\left(>w,\epsilon\right)=\Uq^{-}\cap T_{i_{1}}^{\epsilon}\Uq^{-}\cap T_{i_{1}}^{\epsilon}T_{i_{2}}^{\epsilon}\Uq^{-}\cap\cdots\cap T_{i_{1}}^{\epsilon}\cdots T_{i_{\ell}}^{\epsilon}\Uq^{-}.
\]

\end{prop}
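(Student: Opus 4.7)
The plan is to prove the equality by induction on $\ell = \ell(w)$. The inclusion $\supseteq$ is immediate because $T_w\Uq^- = T_{i_1}\cdots T_{i_\ell}\Uq^-$ appears as one of the factors on the right, and the base case $\ell = 1$ is trivial.

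For the inductive step, set $v = s_{i_2}\cdots s_{i_\ell}$ with reduced word $\bm{i}' = (i_2,\ldots,i_\ell) \in I(v)$, and apply the induction hypothesis to $v$. Since $T_{i_1}$ is an algebra automorphism, the iterated intersection on the right factors as
\[
\bigcap_{k=1}^{\ell} T_{i_1}\cdots T_{i_k}\Uq^- = T_{i_1}\bigl(\Uq^- \cap T_{i_2}\Uq^- \cap \cdots \cap T_{i_2}\cdots T_{i_\ell}\Uq^-\bigr) = T_{i_1}\bigl(\Uq^- \cap T_v\Uq^-\bigr),
\]
the second equality by the induction hypothesis. Intersecting with $\Uq^-$ and using $T_{i_1}(\Uq^- \cap T_v\Uq^-) = T_{i_1}\Uq^- \cap T_w\Uq^-$, the right-hand side of the proposition becomes $\Uq^- \cap T_{i_1}\Uq^- \cap T_w\Uq^-$. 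Comparison with the left-hand side $\Uq^- \cap T_w\Uq^-$ shows that the proposition reduces to the \emph{key inclusion}: whenever $(i_1,\ldots,i_\ell)$ is reduced,
\[
\Uq^- \cap T_w\Uq^- \subseteq T_{i_1}\Uq^-.
\]

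To prove the key inclusion, I would use the orthogonal decomposition $\Uq^- = (\Uq^-\cap T_{i_1}\Uq^-) \oplus f_{i_1}\Uq^-$ from Proposition \ref{prop:braidcrystal}(2) and the characterization $\Uq^-\cap T_{i_1}\Uq^- = \Ker({}_{i_1}r)$. Given $x \in \Uq^-\cap T_w\Uq^-$, write $x = x' + f_{i_1}x''$ with $x' \in \Ker({}_{i_1}r)$ and $x'' \in \Uq^-$; the goal is to show $x'' = 0$. Since $T_w^{-1}(x) \in \Uq^-$ by assumption, the approach is to analyze $T_w^{-1}(x) = T_v^{-1}(T_{i_1}^{-1}(x))$ using $T_{i_1}^{-1}(f_{i_1}) = -e_{i_1}k_{i_1}$ together with the observation that reducedness of $(i_1,\ldots,i_\ell)$ forces $v^{-1}\alpha_{i_1}$ to be a positive root, so that $T_v^{-1}(e_{i_1})$ is a genuine nonzero element of $\Uq^+$ of weight $v^{-1}\alpha_{i_1}$. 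A careful bookkeeping via the triangular decomposition $\Uq \cong \Uq^- \otimes \Uq^0 \otimes \Uq^+$ then isolates the positive-weight correction proportional to $T_v^{-1}(e_{i_1})$ and forces $x'' = 0$.

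\textbf{Main obstacle.} The heart of the argument is the detailed verification of the key inclusion: one must separate the $T_v^{-1}(e_{i_1})$-correction from the rest of $T_w^{-1}(x)$ (note $T_w^{-1}(x') = T_v^{-1}(T_{i_1}^{-1}(x'))$ need not itself lie in $\Uq^-$) and rule out any cancellation that could absorb the positive-weight part. This relies essentially on the reducedness of $\bm{i}$, since without it $T_v^{-1}(e_{i_1})$ could acquire a $\Uq^-$-component destroying the weight separation.
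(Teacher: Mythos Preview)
Your inductive reduction to the key inclusion $\Uq^{-}\cap T_{w}\Uq^{-}\subseteq T_{i_{1}}\Uq^{-}$ is correct and is exactly what the paper does. The gap is in the proof of that inclusion: the obstacle you flag is real, and the ``careful bookkeeping via the triangular decomposition'' you propose is not enough to overcome it.

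Two specific problems. First, the two-term split $x=x'+f_{i_{1}}x''$ with $x''\in\Uq^{-}$ gives no control over $T_{i_{1}}^{-1}(x'')$, which need not lie in $\Uq^{-}$ at all; so after applying $T_{v}^{-1}T_{i_{1}}^{-1}$ you cannot isolate a single $T_{v}^{-1}(e_{i_{1}})$-correction from the rest. Second, and more seriously, even for the good piece $x'\in\Uq^{-}\cap T_{i_{1}}\Uq^{-}$ you have $T_{i_{1}}^{-1}(x')\in\Uq^{-}$, but $T_{v}^{-1}$ of this can itself produce $\Uq^{+}$-contributions of arbitrary positive weights in $\Delta_{+}(\le v)$, and nothing prevents these from interfering with the $T_{v}^{-1}(e_{i_{1}})$-part. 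A coarse weight-grading argument cannot separate them.

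The paper's fix is to replace your two-term split by the full decomposition $x=\sum_{c\ge 0}f_{i_{1}}^{(c)}x_{c}$ with every $x_{c}\in\Ker({}_{i_{1}}r)=\Uq^{-}\cap T_{i_{1}}\Uq^{-}$ (Lemma~\ref{lem:crydecomp}), so that each $T_{i_{1}}^{-1}(x_{c})$ lies in $\Uq^{-}$, and then to \emph{iterate} this along $\bm{i}$: for any $y\in\Uq^{-}$ one obtains an expansion (Lemma~\ref{lem:PBWdecomp})
\[
T_{w}^{-1}(y)\in\sum_{\bm{c}\in\mathbb{Z}_{\ge0}^{\ell}}T_{i_{\ell}}^{-1}\cdots T_{i_{2}}^{-1}\bigl(e_{i_{1}}^{(c_{1})}\bigr)\cdots T_{i_{\ell}}^{-1}\bigl(e_{i_{\ell-1}}^{(c_{\ell-1})}\bigr)e_{i_{\ell}}^{(c_{\ell})}\,\Uq^{\le0}.
\]
Applying this to each $T_{i_{1}}^{-1}(x_{c})\in\Uq^{-}$ with the word $(i_{2},\dots,i_{\ell})$ and combining with the outer $T_{i_{1}}^{-1}(f_{i_{1}}^{(c)})$ factors, one lands in the span of the $(\ell{+}1)$-fold PBW monomials for the reduced word $(i_{1},i_{2},\dots,i_{\ell})$ times $\Uq^{\le0}$. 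Linear independence of these monomials (which is exactly where reducedness of the full word enters) then forces $x_{c}=0$ for all $c>0$. In other words, the missing idea is not finer weight bookkeeping but the iterated \PBW decomposition of $T_{w}^{-1}$ on $\Uq^{-}$ and the linear independence of the resulting $\Uq^{+}$-monomials.
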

In fact, the right hand side does not depend on a choice of a reduced
word $\bm{i}\in I\left(w\right)$. The above proposition can be shown
clearly by the following Lemmas.
\begin{lem}
\label{lem:PBWdecomp}For a Weyl group element $w\in W$ and a reduced
word $\bm{i}=\left(i_{1},\cdots,i_{\ell}\right)\in I\left(w\right)$
and homogenous element $x\in\Uq^{-}$, there exists $x_{\bm{c}}\in\Uq^{-}\cap T_{i_{\ell}}\Uq^{-}$
for $\bm{c}\in\mathbb{Z}_{\geq0}^{\ell}$ with 
\begin{align}
T_{w}^{-1}\left(x\right) & =\sum_{\bm{c}\in\mathbb{Z}_{\geq0}^{\ell}}T_{i_{\ell}}^{-1}\cdots T_{i_{1}}^{-1}\left(f_{i_{1}}^{\left(c_{1}\right)}\right)\cdots T_{i_{\ell}}^{-1}T_{i_{\ell-1}}^{-1}\left(f_{i_{\ell-1}}^{\left(c_{\ell-1}\right)}\right)T_{i_{\ell}}^{-1}\left(f_{i_{\ell}}^{\left(c_{\ell}\right)}\right)T_{i_{\ell}}^{-1}\left(x_{\bm{c}}\right)\label{eq:PBWdecomp}\\
 & \in\sum_{\bm{c}\in\mathbb{Z}_{\geq0}^{\ell}}T_{i_{\ell}}^{-1}\cdots T_{i_{2}}^{-1}\left(e_{i_{1}}^{\left(c_{1}\right)}\right)\cdots T_{i_{\ell}}^{-1}\left(e_{i_{\ell-1}}^{\left(c_{\ell-1}\right)}\right)e_{i_{\ell}}^{\left(c_{\ell}\right)}\Uq^{\leq0}.\nonumber 
\end{align}
\end{lem}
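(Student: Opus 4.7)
The plan is to prove the lemma by induction on the length $\ell=\ell(w)$, iterating Lemma~\ref{lem:crydecomp} together with the characterization $\mathrm{Ker}({_ir}) = \Uq^{-}\cap T_{i}\Uq^{-}$ from Proposition~\ref{prop:braidcrystal}(1). The base case $\ell=1$ is immediate: apply Lemma~\ref{lem:crydecomp} with $i=i_{1}$ to write $x=\sum_{c_{1}\geq0}f_{i_{1}}^{(c_{1})}x_{c_{1}}$ with $x_{c_{1}}\in\mathrm{Ker}({_{i_{1}}r}) = \Uq^{-}\cap T_{i_{1}}\Uq^{-}$, then apply the algebra automorphism $T_{i_{1}}^{-1}$ term by term.

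For the inductive step, first decompose $x=\sum_{c_{1}\geq0}f_{i_{1}}^{(c_{1})}y_{c_{1}}$ with $y_{c_{1}}\in\Uq^{-}\cap T_{i_{1}}\Uq^{-}$. The condition $y_{c_{1}}\in T_{i_{1}}\Uq^{-}$ ensures $T_{i_{1}}^{-1}(y_{c_{1}})\in\Uq^{-}$, so the induction hypothesis applies to $T_{i_{1}}^{-1}(y_{c_{1}})$ with the reduced word $(i_{2},\ldots,i_{\ell})$ of $s_{i_{1}}w$. Multiplying the resulting expression by $T_{i_{\ell}}^{-1}\cdots T_{i_{1}}^{-1}(f_{i_{1}}^{(c_{1})})$ and summing over $c_{1}$ produces the first equality of \eqref{eq:PBWdecomp}; the remainder $x_{\bm{c}}\in\Uq^{-}\cap T_{i_{\ell}}\Uq^{-}$ is the one generated at the deepest level of the induction.

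For the second inclusion, I would use Lusztig's explicit formula $T_{i}^{-1}(f_{i})=-e_{i}k_{i}$ together with the Cartan commutation $e_{i}k_{i}=q_{i}^{-2}k_{i}e_{i}$ to obtain $T_{i_{k}}^{-1}(f_{i_{k}}^{(c_{k})})=\lambda_{c_{k}}\,e_{i_{k}}^{(c_{k})}k_{i_{k}}^{c_{k}}$ for an explicit scalar $\lambda_{c_{k}}\in\mathbb{Q}(q)^{\times}$. Applying $T_{i_{\ell}}^{-1}\cdots T_{i_{k+1}}^{-1}$, which preserves $\Uq^{0}$ since $T_{j}^{-1}(q^{h})=q^{s_{j}(h)}$, yields
\[
T_{i_{\ell}}^{-1}\cdots T_{i_{k}}^{-1}(f_{i_{k}}^{(c_{k})})=\lambda_{c_{k}}\,T_{i_{\ell}}^{-1}\cdots T_{i_{k+1}}^{-1}(e_{i_{k}}^{(c_{k})})\cdot h_{k,\bm{c}}
\]
with $h_{k,\bm{c}}\in\Uq^{0}$. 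Because each $T$-image of a divided power of $e$ is weight-homogeneous, the Cartan relations $q^{h}y=q^{\langle h,\wt y\rangle}y\,q^{h}$ allow the $h_{k,\bm{c}}$ and the last factor $T_{i_{\ell}}^{-1}(x_{\bm{c}})\in\Uq^{-}$ to be pushed past all the $e$-factors to the right at the cost of scalar $q$-factors; the residual product on the right lies in $\Uq^{0}\cdot\Uq^{-}=\Uq^{\leq0}$, establishing the desired inclusion.

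The main technical obstacle is the careful bookkeeping of the $q$-scalars produced by the Cartan commutations in the second part, together with consistent management of the intermediate remainders $x_{\bm{c}}$ through the induction. Crucially, no cross-interaction between $e$-factors and $f$-factors occurs in this argument, so no $q$-Serre-type identities are needed.
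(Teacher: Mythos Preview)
Your argument is correct and follows the same route as the paper's own (suppressed) proof: iterate the decomposition of Lemma~\ref{lem:crydecomp} together with the identification $\Ker({}_{i}r)=\Uq^{-}\cap T_{i}\Uq^{-}$ from Proposition~\ref{prop:braidcrystal}(1), applying $T_{i_{k}}^{-1}$ at each step and peeling off one index of the reduced word. Your treatment of the second inclusion via $T_{i}^{-1}(f_{i}^{(c)})\in\mathbb{Q}(q)^{\times}e_{i}^{(c)}k_{i}^{c}$ and commutation of the resulting Cartan factors to the right is in fact more explicit than what the paper records; the paper leaves that step implicit.
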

\begin{rem}
We note that it is not clear that $T_{i_{\ell}}^{-1}\left(x_{\bm{c}}\right)\in\Uq^{-}\cap T_{w}^{-1}\Uq^{-}$
in the right hand side of (\ref{eq:PBWdecomp}). But, in fact, it
can be proved by the surjectivity.
\end{rem}
\begin{NB}
\begin{proof}
This is an argument in $\ell\left(w\right)=1$ case, any element $x\in\Uq^{-}$
can be written 
\[
x=\sum f_{i}^{\left(c\right)}x_{c}\;\text{with}\;x_{c}\in\Uq^{-}\cap T_{i}\Uq^{-}.
\]
We consider the decomposition:
\[
x=\sum f_{i_{1}}^{\left(c_{1}\right)}x_{c_{1}}\;\text{with}\;x_{c_{1}}\in\Uq^{-}\cap T_{i_{1}}\Uq^{-}.
\]

So we have $T_{i_{1}}^{-1}\left(x_{c_{1}}\right)\in\mathbf{U}_{q}^{-}\cap T_{i_{1}}^{-1}\mathbf{U}_{q}^{-1}$.
Applying $T_{i_{1}}^{-1}$, we obtain 
\[
T_{i_{1}}^{-1}x=\sum_{c_{1}\geq0}T_{i_{1}}^{-1}\left(f_{i_{1}}^{\left(c_{1}\right)}\right)T_{i_{1}}^{-1}\left(x_{c_{1}}\right).
\]
with so we obtain the claim. For $T_{i_{1}}^{-1}\left(x_{c_{1}}\right)\in\Uq^{-}$,
we consider the decomposition 
\[
T_{i_{1}}^{-1}\left(x_{c_{1}}\right)=\sum_{c_{2}\geq0}f_{i_{2}}^{\left(c_{2}\right)}x_{c_{1},c_{2}}\;\text{with}\;x_{c_{1},c_{2}}\in\Uq^{-}\cap T_{i_{2}}\Uq^{-}
\]
 so we obtain 
\[
T_{i_{2}}^{-1}T_{i_{1}}^{-1}\left(x_{c_{1}}\right)=\sum_{c_{2}\geq0}T_{i_{2}}^{-1}\left(f_{i_{2}}^{\left(c_{2}\right)}\right)T_{i_{2}}^{-1}\left(x_{c_{1},c_{2}}\right)
\]
hence we have 
\begin{align*}
T_{i_{2}}^{-1}T_{i_{1}}^{-1}x & =T_{i_{2}}^{-1}\left(\sum_{c_{1}\geq0}T_{i_{1}}^{-1}\left(f_{i_{1}}^{\left(c\right)}\right)T_{i_{1}}^{-1}\left(x_{c_{1}}\right)\right)\\
 & =\sum_{\substack{c_{1}\geq0\\
c_{2}\geq0
}
}T_{i_{2}}^{-1}T_{i_{1}}^{-1}\left(f_{i_{1}}^{\left(c_{1}\right)}\right)T_{i_{2}}^{-1}\left(f_{i_{2}}^{\left(c_{2}\right)}\right)T_{i_{2}}^{-1}\left(x_{c_{1},c_{2}}\right).
\end{align*}
Iterating the construction along $\bm{w}$, we obtain 
\[
T_{i_{\ell}}^{-1}\cdots T_{i_{1}}^{-1}x=\sum_{\bm{c}\in\mathbb{Z}_{\geq0}^{\ell}}T_{i_{\ell}}^{-1}\cdots T_{i_{1}}^{-1}\left(f_{i_{1}}^{\left(c_{1}\right)}\right)\cdots T_{i_{\ell}}^{-1}T_{i_{\ell-1}}^{-1}\left(f_{i_{\ell-1}}^{\left(c_{\ell-1}\right)}\right)T_{i_{\ell}}^{-1}\left(f_{i_{\ell}}\right)T_{i_{\ell}}^{-1}\left(x_{c_{1},\cdots,c_{\ell}}\right)
\]

Then we obtain the claim.

\end{proof}
\end{NB}
\begin{lem}
If $\ell\left(s_{i}w\right)>\ell\left(w\right)$, we have $\Uq^{-}\cap T_{s_{i}w}\Uq^{-}\subset\Uq^{-}\cap T_{i}\Uq^{-}\cap T_{w}\Uq^{-}$.\end{lem}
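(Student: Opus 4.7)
The plan is to exploit a Kashiwara-type decomposition along the simple root $\alpha_i$ together with the hypothesis $\ell(s_i w) > \ell(w)$, which guarantees $w^{-1}(\alpha_i) \in \Delta_+$. If $(i_1, \ldots, i_\ell)$ is a reduced word for $w$, then $(i, i_1, \ldots, i_\ell)$ is a reduced word for $s_i w$, so $T_{s_i w} = T_i T_w$ and hence $T_{s_i w}^{-1} = T_w^{-1} T_i^{-1}$. Moreover $E := T_w^{-1}(e_i)$ lies in $\Uq^+$, being the positive root vector attached to the positive root $w^{-1}(\alpha_i)$.

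Given $x \in \Uq^- \cap T_{s_i w}\Uq^-$, I would first apply Lemma \ref{lem:crydecomp} to write $x = \sum_{c \geq 0} f_i^{(c)} x_c$ with each $x_c \in \Uq^- \cap T_i\Uq^- = \Ker({}_i r)$. The principal aim is to force $x_c = 0$ for all $c \geq 1$, which yields $x = x_0 \in \Uq^- \cap T_i\Uq^-$---the first inclusion. Using $T_i^{-1}(f_i) = -e_i k_i$, hence $T_i^{-1}(f_i^{(c)}) = (-1)^c q_i^{c(c-1)} e_i^{(c)} k_i^c$, and the fact that $T_i^{-1}(x_c) \in \Uq^-$, one obtains
\[
T_i^{-1}(x) = T_i^{-1}(x_0) + \sum_{c \geq 1} (-1)^c q_i^{c(c-1)} e_i^{(c)} k_i^c T_i^{-1}(x_c),
\]
and applying $T_w^{-1}$ rewrites $T_{s_i w}^{-1}(x)$ as an analogous sum in which every $c \geq 1$ summand carries the positive-weight factor $E^{(c)} \in \Uq^+$.

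The key step is to deduce from $T_{s_i w}^{-1}(x) \in \Uq^-$ that these $c \geq 1$ contributions must vanish individually. I would make this rigorous via Lemma \ref{lem:PBWdecomp} applied to $s_i w$ with the reduced word $(i, i_1, \ldots, i_\ell)$: the resulting PBW-style expansion pairs each positive-weight $\Uq^+$-monomial uniquely with a $\Uq^{\leq 0}$-piece in the triangular decomposition $\Uq = \Uq^+ \otimes \Uq^0 \otimes \Uq^-$, so linear independence of these monomials forces each positive-weight coefficient to vanish, giving $x_c = 0$ for $c \geq 1$. Once $x \in \Uq^- \cap T_i\Uq^-$ is established, the companion assertion involving $T_w\Uq^-$ follows immediately for the element $y := T_i^{-1}(x) = T_i^{-1}(x_0)$: it lies in $\Uq^-$ and $T_w^{-1}(y) = T_{s_i w}^{-1}(x) \in \Uq^-$, so $y \in \Uq^- \cap T_w\Uq^-$---precisely the statement needed to iterate the lemma in the inductive proof of Proposition \ref{prop:cofinsubgroup}.

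The principal obstacle is the linear-independence step in the triangular decomposition: one must rule out any cancellation between $T_{s_i w}^{-1}(x_0)$ and the $c \geq 1$ terms, which carry the distinct $\Uq^+$-factors $E^{(c)}$. This is controlled by the uniqueness of the PBW expansion furnished by Lemma \ref{lem:PBWdecomp}, together with the fact that $E$, as a genuine positive root vector, has divided powers $\{E^{(c)}\}_{c \geq 0}$ that are linearly independent in $\Uq^+$.
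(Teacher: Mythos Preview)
Your argument is essentially the paper's own proof: decompose $x=\sum_{c\ge 0} f_i^{(c)}x_c$ with $x_c\in\Uq^{-}\cap T_i\Uq^{-}$, apply $T_w^{-1}T_i^{-1}=T_{s_iw}^{-1}$, use Lemma~\ref{lem:PBWdecomp} to expand into PBW monomials in $\Uq^{+}$ times $\Uq^{\le 0}$, and invoke linear independence of the length-$(\ell+1)$ PBW family (valid precisely because $\ell(s_iw)>\ell(w)$) together with the triangular decomposition to kill all $c\ge 1$ terms. Your closing observation that what one actually obtains is $T_i^{-1}(x_0)\in\Uq^{-}\cap T_w\Uq^{-}$ (rather than $x_0\in T_w\Uq^{-}$) is in fact the correct and needed statement for the induction in Proposition~\ref{prop:cofinsubgroup}; the paper's final line is slightly imprecise on this point.
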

\begin{proof}
For a homogenous element $x\in\Uq^{-}$, we decompose $x=\sum_{c\geq0}f_{i}^{\left(c\right)}x_{c}$
with $x_{c}\in\Uq^{-}\cap T_{i}\Uq^{-}$. So we have
\begin{align*}
T_{i}^{-1}x & =\sum_{c\geq0}T_{i}^{-1}\left(f_{i}^{\left(c\right)}\right)T_{i}^{-1}\left(x_{c}\right)\in\sum_{c\geq0}e_{i}^{\left(c\right)}\Uq^{\leq0}
\end{align*}
with $T_{i}^{-1}\left(x_{c}\right)\in\Uq^{-}\cap T_{i}^{-1}\Uq^{-}$.
Apply $T_{w}^{-1}$ in the both side, we have 
\begin{align*}
T_{w}^{-1}T_{i}^{-1}x & =\sum_{c\geq0}T_{w}^{-1}\left(f_{i}^{\left(c\right)}\right)T_{w}^{-1}\left(x_{c}\right)\\
 & \in\sum_{c\geq0}T_{i_{\ell}}^{-1}\cdots T_{i_{1}}^{-1}\left(e_{i}^{\left(c\right)}\right)T_{i_{\ell}}^{-1}\cdots T_{i_{2}}^{-1}\left(e_{i_{1}}^{\left(c_{1}\right)}\right)\cdots T_{i_{\ell}}^{-1}\left(e_{i_{\ell-1}}^{\left(c_{\ell-1}\right)}\right)e_{i_{\ell}}^{\left(c_{\ell}\right)}\Uq^{\leq0}
\end{align*}
For $x\in\Uq^{-}\cap T_{s_{i}w}\Uq^{-}$, we have $T_{w}^{-1}T_{i}^{-1}x\in\Uq^{-}\cap T_{s_{i}w}^{-1}\Uq^{-}$.
Since 
\[
\left\{ T_{i_{\ell}}^{-1}\cdots T_{i_{1}}^{-1}\left(e_{i}^{\left(c\right)}\right)T_{i_{\ell}}^{-1}\cdots T_{i_{2}}^{-1}\left(e_{i_{1}}^{\left(c_{1}\right)}\right)\cdots T_{i_{\ell}}^{-1}\left(e_{i_{\ell-1}}^{\left(c_{\ell-1}\right)}\right)e_{i_{\ell}}^{\left(c_{\ell}\right)}\middle|\left(c,c_{1},\cdots,c_{\ell}\right)\in\mathbb{Z}_{\geq0}^{\ell+1}\right\} 
\]
is linearly independent by the assumption $\ell\left(s_{i}w\right)>\ell\left(w\right)$,
hence we should have $x_{c}=0$ for $c>0$. In particular $x=x_{0}\in\Uq^{-}\cap T_{i}\Uq^{-}$
and also $T_{w}^{-1}\left(X_{0}\right)\in\Uq^{-}\cap T_{w}^{-1}\Uq^{-}$.
So $x=x_{0}\in\Uq^{-}\cap T_{i}\Uq^{-}\cap T_{w}\Uq^{-}$.
\end{proof}

\subsubsection{}

Let $w$ be a Weyl group element and $\bm{i}=\left(i_{1},\cdots,i_{\ell}\right)\in I\left(w\right)$
be a reduced word. Following Saito \cite[Lemma 4.1.3]{Saito:PBW}
and Baumann-Kamnitzer-Tingley \cite[Proposition 5.24]{BKT:affine},
we define Lusztig datum of $b\in\mathscr{B}\left(\infty\right)$ in
direction $\bm{i}\in I\left(w\right)$ and $\epsilon\in\left\{ \pm1\right\} $
($\left(\bm{i},\epsilon\right)$-Lusztig datum for short).
\begin{defn}[$\left(\bm{i},\epsilon\right)$-Lusztig datum]
 For $w\in W$, $\bm{i}\in I\left(w\right)$ and $\epsilon\in\left\{ \pm1\right\} $,
we define
\begin{align*}
L_{\epsilon}\left(b,\bm{i}\right) & =\begin{cases}
\left(\varepsilon_{i_{1}}\left(b\right),\varepsilon_{i_{2}}\left(\hat{\sigma}_{i_{1}}^{*}b\right),\cdots,\varepsilon_{i_{\ell}}\left(\hat{\sigma}_{i_{\ell-1}}^{*}\cdots\hat{\sigma}_{i_{1}}^{*}b\right)\right)\in\mathbb{Z}_{\geq0}^{\ell} & \epsilon=+1\\
\left(\varepsilon_{i_{1}}^{*}\left(b\right),\varepsilon_{i_{2}}^{*}\left(\hat{\sigma}_{i_{1}}b\right),\cdots,\varepsilon_{i_{\ell}}^{*}\left(\hat{\sigma}_{i_{\ell-1}}\cdots\hat{\sigma}_{i_{1}}b\right)\right)\in\mathbb{Z}_{\geq0}^{\ell} & \epsilon=-1
\end{cases}
\end{align*}

\end{defn}
By construction in \ref{thm:PBW}, we have 
\[
\bm{c}=L_{\epsilon}\left(b_{\epsilon}\left(\bm{c},\bm{i}\right),\bm{i}\right)
\]
for $\bm{c}\in\mathbb{Z}_{\geq0}^{\ell}$, that is the map $b_{\epsilon}\left(-,\bm{i}\right)\colon\mathbb{Z}_{\geq0}^{\ell}\to\mathscr{B}\left(\infty\right)$
is a section of $\left(\bm{i},\epsilon\right)$-Lusztig datum $L_{\epsilon}\left(-,\bm{i}\right)\colon\mathscr{B}\left(\infty\right)\to\mathbb{Z}_{\geq0}^{\ell}$.

\subsubsection{}

The following gives a characterization of $\mathbf{B}^{\mathrm{up}}\left(>w,\epsilon\right)$
in terms of the $\left(\bm{i},\epsilon\right)$-Lusztig data.
\begin{thm}
For $w\in W$ and $\bm{i}=\left(i_{1},\cdots,i_{\ell}\right)\in I\left(w\right)$,
we set 
\[
\mathscr{B}\left(>w,\epsilon\right)=\left\{ b\in\mathscr{B}\left(\infty\right)\mid L_{\epsilon}\left(b,\bm{i}\right)=0\right\} ,
\]
then we have 
\[
\Uq^{-}\left(>w,\epsilon\right)=\bigoplus_{b\in\mathscr{B}\left(>w,\epsilon\right)}\mathbb{Q}\left(q\right)G^{\up}\left(b\right).
\]
\end{thm}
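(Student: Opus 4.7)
The plan is to induct on $\ell=\ell(w)$. The base case $\ell=0$ is trivial since $\Uq^{-}(>e,\epsilon)=\Uq^{-}$ and the condition $L_{\epsilon}(b,\bm{i})=0$ is vacuous. I will treat $\epsilon=+1$ in detail; the case $\epsilon=-1$ follows by the entirely parallel argument obtained by replacing $T_{i}$, $\sigma_{i}$, $\varepsilon_{i}$, $\hat{\sigma}_{i}^{*}$ by $T_{i}^{-1}$, $\sigma_{i}^{*}$, $\varepsilon_{i}^{*}$, $\hat{\sigma}_{i}$ and invoking the second halves of Corollary \ref{cor:dualcan-simple} and Theorem \ref{thm:braidcanbasis}.

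Set $w'=s_{i_{2}}\cdots s_{i_{\ell}}$, which has $\bm{i}'=(i_{2},\ldots,i_{\ell})\in I(w')$ as an induced reduced word. Applying Proposition \ref{prop:cofinsubgroup} to both $w$ and $w'$, and using that $T_{i_{1}}$ is a $\mathbb{Q}(q)$-algebra automorphism of $\Uq$ to factor out the leftmost symmetry, I get the recursive identity $\Uq^{-}(>w,+1)=\Uq^{-}\cap T_{i_{1}}\,\Uq^{-}(>w',+1)$; hence $T_{i_{1}}^{-1}$ furnishes a $\mathbb{Q}(q)$-linear isomorphism
\[
\Uq^{-}(>w,+1)\xrightarrow{\sim}\bigl(\Uq^{-}\cap T_{i_{1}}^{-1}\Uq^{-}\bigr)\cap\Uq^{-}(>w',+1).
\]
The inductive hypothesis and Corollary \ref{cor:dualcan-simple} describe the two factors on the right as spans of explicit subsets of the dual canonical basis, namely $\{G^{\up}(b')\mid b'\in\mathscr{B}(>w',+1)\}$ and $\{G^{\up}(b')\mid\varepsilon_{i_{1}}^{*}(b')=0\}$ respectively. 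Since an intersection of two subspaces each spanned by a subset of the dual canonical basis is spanned by the intersection of the indexing sets, the right hand side decomposes as $\bigoplus \mathbb{Q}(q)\,G^{\up}(b')$ over $b'\in\mathscr{B}(>w',+1)$ satisfying $\varepsilon_{i_{1}}^{*}(b')=0$.

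Pushing this description back via $T_{i_{1}}$, I will invoke Theorem \ref{thm:braidcanbasis}(1) to replace $T_{i_{1}}G^{\up}(b')$ by the nonzero scalar multiple $(1-q_{i_{1}}^{2})^{-\langle h_{i_{1}},\wt b'\rangle}G^{\up}(\sigma_{i_{1}}b')$. The proof will then reduce to the combinatorial identity
\[
\{\sigma_{i_{1}}b'\mid b'\in\mathscr{B}(>w',+1),\,\varepsilon_{i_{1}}^{*}(b')=0\}=\mathscr{B}(>w,+1).
\]
To establish this I will argue that for $b=\sigma_{i_{1}}b'$ with $\varepsilon_{i_{1}}^{*}(b')=0$ the construction of Saito's reflection forces $\varepsilon_{i_{1}}(b)=0$, and therefore $\tilde{e}_{i_{1}}^{\max}(b)=b$ and $\hat{\sigma}_{i_{1}}^{*}(b)=\sigma_{i_{1}}^{*}(b)=b'$. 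Substituting this into the recursive definition of $L_{+1}(b,\bm{i})$ expresses it as the concatenation of $\varepsilon_{i_{1}}(b)$ with $L_{+1}(b',\bm{i}')$, so that $L_{+1}(b,\bm{i})=0$ is equivalent to $\varepsilon_{i_{1}}(b)=0$ together with $b'\in\mathscr{B}(>w',+1)$, yielding the required bijection.

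The main obstacle will be this final combinatorial step, where I must match the extended Saito reflection $\hat{\sigma}_{i_{1}}^{*}$ on all of $\mathscr{B}(\infty)$ with the partial inverse $\sigma_{i_{1}}^{*}$ of $\sigma_{i_{1}}$, defined only on $\{\varepsilon_{i_{1}}=0\}$, and to use that these two operators coincide precisely on that subset. The scalar $(1-q_{i_{1}}^{2})^{-\langle h_{i_{1}},\wt b'\rangle}$ coming from Theorem \ref{thm:braidcanbasis} is harmless since it only affects normalization and not the span.
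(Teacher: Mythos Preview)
Your proposal is correct and follows essentially the same approach as the paper: both argue by induction on $\ell(w)$, use Proposition \ref{prop:cofinsubgroup} to rewrite $\Uq^{-}(>w,+1)$ as $\Uq^{-}\cap T_{i_{1}}\Uq^{-}(>w',+1)$, apply $T_{i_{1}}^{-1}$ to land in the intersection $(\Uq^{-}\cap T_{i_{1}}^{-1}\Uq^{-})\cap\Uq^{-}(>w',+1)$, observe that each factor is spanned by a subset of the dual canonical basis (via Corollary \ref{cor:dualcan-simple} and the inductive hypothesis), and then push back through Theorem \ref{thm:braidcanbasis}. Your write-up is in fact more explicit than the paper's on the final combinatorial bijection identifying $\mathscr{B}(>w,+1)$ with $\sigma_{i_{1}}\{b'\in\mathscr{B}(>w',+1)\mid\varepsilon_{i_{1}}^{*}(b')=0\}$, which the paper only states.
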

\begin{proof}
By the Proposition \ref{prop:cofinsubgroup}, it suffices for us to
prove the compatibility for the intersection $\Uq^{-}\cap T_{i_{1}}^{\epsilon}\Uq^{-}\cap T_{i_{1}}^{\epsilon}T_{i_{2}}^{\epsilon}\Uq^{-}\cap\cdots\cap T_{i_{1}}^{\epsilon}\cdots T_{i_{\ell}}^{\epsilon}\Uq^{-}$.

Since $\epsilon=-1$ can be obtained by applying the $*$-involution,
we only prove $\epsilon=1$ case. We prove the claim by the induction
on the length $\ell\left(w\right)$. For $\ell\left(w\right)=1$,
it is the claim in Corollary \ref{cor:dualcan-simple}. We consider
the following intersection:
\[
\Uq^{-}\cap T_{i_{1}}^{-1}\Uq^{-}\cap T_{i_{2}}\Uq^{-}\cap\cdots\cap T_{i_{2}}\cdots T_{i_{\ell}}\Uq^{-};
\]
By the assumption of the induction on length, we know that $\Uq^{-}\cap T_{i_{2}}\Uq^{-}\cap\cdots\cap T_{i_{2}}\cdots T_{i_{\ell}}\Uq^{-}$
is compatible with the dual canonical basis and also $\Uq^{-}\cap T_{i_{1}}^{-1}\Uq^{-}$
is compatible with the dual canonical basis, hence the intersection
$\Uq^{-}\cap T_{i_{1}}^{-1}\Uq^{-}\cap T_{i_{2}}\Uq^{-}\cap\cdots\cap T_{i_{2}}\cdots T_{i_{\ell}}\Uq^{-}$
is compatible with the dual canonical basis. Applying Theorem \ref{thm:braidcanbasis},
we obtain the claim for $\Uq^{-}\cap T_{i_{1}}\Uq^{-}\cap T_{i_{1}}T_{i_{2}}\Uq^{-}\cap\cdots\cap T_{i_{1}}\cdots T_{i_{\ell}}\Uq^{-}$.
Since $\Uq^{-}\cap T_{i_{1}}\Uq^{-}\cap T_{i_{1}}T_{i_{2}}\Uq^{-}\cap\cdots\cap T_{i_{1}}\cdots T_{i_{\ell}}\Uq^{-}=\Uq^{-}\cap T_{i_{1}}\left(\Uq^{-}\cap T_{i_{2}}\Uq^{-}\cap\cdots\cap T_{i_{2}}\cdots T_{i_{\ell}}\Uq^{-}\right)$,
we obtain the description of $\mathbf{B}^{\mathrm{up}}\left(>w,+1\right)$.
\end{proof}

\subsection{Multiplication formula between $\mathbf{B}^{\mathrm{up}}\left(\leq w,\epsilon\right)$
and $\mathbf{B}^{\mathrm{up}}\left(>w,\epsilon\right)$}

\subsubsection{}

We generalize the (special cases of) formula in Theorem \ref{thm:multdualChevalley}
using the dual canonical basis $\mathbf{B}^{\mathrm{up}}\left(>w,\epsilon\right)$.
\begin{thm}
\label{thm:dualPBWmult}

For $b\in\mathscr{B}\left(>w,\epsilon\right)$ and $\bm{c}\in\mathbb{Z}_{\geq0}$,
we have
\begin{align*}
f_{\epsilon}^{\up}\left(\bm{c},\bm{i}\right)G^{\up}\left(b\right) & \in G^{\up}\left(\nabla_{\bm{i},\epsilon}^{\bm{c}}\left(b\right)\right)+\sum_{L_{\epsilon}\left(b^{'},\bm{i}\right)<\bm{c}}q\mathbb{Z}\left[q\right]G^{\up}\left(b^{'}\right)\;\text{if}\;\epsilon=+1,\\
G^{\up}\left(b\right)f_{\epsilon}^{\up}\left(\bm{c},\bm{i}\right) & \in G^{\up}\left(\nabla_{\bm{i},\epsilon}^{\bm{c}}\left(b\right)\right)+\sum_{L_{\epsilon}\left(b^{'},\bm{i}\right)<\bm{c}}q\mathbb{Z}\left[q\right]G^{\up}\left(b^{'}\right)\;\text{if}\;\epsilon=-1,
\end{align*}
where
\[
\nabla_{\bm{i},\epsilon}^{\bm{c}}\left(b\right)=\begin{cases}
\tilde{f}_{i_{1}}^{c_{1}}\sigma_{i_{1}}\cdots\tilde{f}_{i_{\ell-1}}^{c_{\ell-1}}\sigma_{i_{\ell-1}}\tilde{f}_{i_{\ell}}^{c_{\ell}}\sigma_{i_{\ell}}\sigma_{i_{\ell}}^{*}\cdots\sigma_{i_{1}}^{*}\left(b\right) & \text{if}\;\epsilon=+1,\\
\tilde{f}_{i_{1}}^{*c_{1}}\sigma_{i_{1}}^{*}\cdots\tilde{f}_{i_{\ell-1}}^{*c_{\ell-1}}\sigma_{i_{\ell-1}}^{*}\tilde{f}_{i_{\ell}}^{*c_{\ell}}\sigma_{i_{\ell}}^{*}\sigma_{i_{\ell}}\cdots\sigma_{i_{1}}\left(b\right) & \text{if}\;\epsilon=-1.
\end{cases}
\]
\end{thm}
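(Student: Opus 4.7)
The plan is induction on $\ell = \ell(w)$, carried out only for $\epsilon = +1$; the $\epsilon = -1$ case follows by applying the anti-involution $*$, which interchanges the starred and unstarred crystal data and swaps left/right multiplication. In the base case $\ell = 1$, one has $f^{\up}((c_1),(i_1)) = f_{i_1}^{\{c_1\}}$ by Proposition \ref{prop:dualsimpleroot}, the hypothesis $b \in \mathscr{B}(>s_{i_1},+1)$ forces $\varepsilon_{i_1}(b) = 0$, and $\sigma_{i_1}\sigma_{i_1}^*$ acts as the identity on $\{b : \varepsilon_{i_1}(b) = 0\}$; hence $\nabla^{(c_1)}_{(i_1),+1}(b) = \tilde f_{i_1}^{c_1} b$ and the result follows from Theorem \ref{thm:multdualChevalley} with leading coefficient $q_{i_1}^{0} = 1$.

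For the inductive step, split $\bm i = (i_1, \bm i_{\geq 2})$, $\bm c = (c_1, \bm c_{\geq 2})$ and $w_{\geq 2} = s_{i_1} w$. The key is to combine two factorizations. The PBW-side formula stated after the definition of the dual PBW basis reads
\[
f^{\up}(\bm c, \bm i) = (1-q_{i_1}^2)^{\langle h_{i_1},\, \xi(\bm c_{\geq 2}, \bm i_{\geq 2})\rangle}\, f_{i_1}^{\{c_1\}}\, T_{i_1}\bigl(f^{\up}(\bm c_{\geq 2}, \bm i_{\geq 2})\bigr),
\]
and, using the description $\mathscr{B}(>w, +1) = \{b : L_{+1}(b, \bm i) = 0\}$ together with Proposition \ref{prop:cofinsubgroup}, any $b \in \mathscr{B}(>w,+1)$ satisfies $\varepsilon_{i_1}(b) = 0$ and $\sigma_{i_1}^* b \in \mathscr{B}(>w_{\geq 2}, +1)$; applying Theorem \ref{thm:braidcanbasis}(1) to $\sigma_{i_1}^* b$ (which has $\varepsilon_{i_1}^* = 0$) gives $G^{\up}(b) = (1-q_{i_1}^2)^{\langle h_{i_1},\, \wt \sigma_{i_1}^* b\rangle} T_{i_1}(G^{\up}(\sigma_{i_1}^* b))$. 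Since $T_{i_1}$ is an algebra homomorphism, the product becomes
\[
f^{\up}(\bm c,\bm i)\, G^{\up}(b) = (1-q_{i_1}^2)^{N}\, f_{i_1}^{\{c_1\}}\, T_{i_1}\bigl(f^{\up}(\bm c_{\geq 2}, \bm i_{\geq 2})\, G^{\up}(\sigma_{i_1}^* b)\bigr),
\]
with $N = \langle h_{i_1}, \xi(\bm c_{\geq 2}, \bm i_{\geq 2}) + \wt \sigma_{i_1}^* b\rangle$. Apply the inductive hypothesis to the inner product, then apply Theorem \ref{thm:braidcanbasis}(1) termwise to each $T_{i_1} G^{\up}(\cdot)$; a weight count shows the accumulated powers of $(1-q_{i_1}^2)$ cancel $N$ exactly. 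Finally Theorem \ref{thm:multdualChevalley} expands $f_{i_1}^{\{c_1\}} G^{\up}(\sigma_{i_1}\cdot)$ with leading coefficient $q_{i_1}^0 = 1$ (since $\varepsilon_{i_1}(\sigma_{i_1}\cdot) = 0$), and the leading crystal element unfolds recursively to $\nabla^{\bm c}_{\bm i,+1}(b)$.

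The main obstacle is the lex-order bookkeeping on the remaining terms. Error terms come in two types: (i) those inherited from the inductive hypothesis yield $G^{\up}(\tilde f_{i_1}^{c_1}\sigma_{i_1} b'')$ whose first Lusztig coordinate is $c_1$ (using $\hat\sigma_{i_1}^*(\tilde f_{i_1}^{c_1}\sigma_{i_1} b'') = b''$, which follows from $\varepsilon_{i_1}^*(b'') = 0$) and whose tail is $L_{+1}(b'', \bm i_{\geq 2}) < \bm c_{\geq 2}$; (ii) those arising from Theorem \ref{thm:multdualChevalley} yield $G^{\up}(b''')$ with $\varepsilon_{i_1}(b''') < c_1$, so first Lusztig coordinate strictly less than $c_1$. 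In both cases $L_{+1}(\cdot,\bm i) < \bm c$ in lex order. A subtle point is that the a priori bound $q_{i_1}^{-c\varepsilon_i}q\mathbb{Z}[q]$ on Theorem \ref{thm:multdualChevalley}'s coefficients collapses to $q\mathbb{Z}[q]$ precisely because $\varepsilon_{i_1}$ vanishes on every $\sigma_{i_1}(\cdot)$ encountered, giving the required integrality of the expansion.
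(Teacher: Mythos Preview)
Your proof is correct and follows essentially the same approach as the paper's: induction on $\ell(w)$, factor both $f^{\up}(\bm c,\bm i)$ and $G^{\up}(b)$ through $T_{i_1}$ using the dual PBW recursion and Theorem~\ref{thm:braidcanbasis}, apply the inductive hypothesis inside, then finish with Theorem~\ref{thm:multdualChevalley} and the lex-order bookkeeping on the two types of error terms. The one point the paper spells out that you assert without justification is why the error terms $b''$ from the inductive hypothesis satisfy $\varepsilon_{i_1}^*(b'')=0$: this holds because both factors $f^{\up}(\bm c_{\geq 2},\bm i_{\geq 2})$ and $G^{\up}(\sigma_{i_1}^* b)$ lie in the subalgebra $\Uq^-\cap T_{i_1}^{-1}\Uq^-$, which by Corollary~\ref{cor:dualcan-simple} is spanned by exactly those $G^{\up}(b'')$.
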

\begin{proof}
We only proof for $\epsilon=+1$ case. The $\epsilon=-1$ can be proved
by applying the $*$-involution. We prove by induction on the length
$\ell\left(w\right)$. Let $w_{\geq2}=s_{i_{2}}\cdots s_{i_{\ell}}\in W$
and $\bm{i_{\geq2}}=\left(i_{2},\cdots,i_{\ell}\right)\in I\left(w_{\geq2}\right)$.
Let $b\in\mathscr{B}\left(\infty\right)$ with $L_{+1}\left(b,\bm{i}\right)=0$,
that is we have 
\[
\left(\varepsilon_{i_{1}}\left(b\right),\varepsilon_{i_{2}}\left(\sigma_{i_{1}}^{*}b\right),\cdots,\varepsilon_{i_{\ell}}\left(\sigma_{i_{\ell}}^{*}\cdots\sigma_{i_{1}}^{*}b\right)\right)=\left(0,\cdots,0\right),
\]
so let $b_{\geq2}:=\sigma_{i_{1}}^{*}b$, then we have 
\begin{align*}
L_{+1}\left(b_{\geq2},\bm{i}_{\geq2}\right) & =\left(\varepsilon_{i_{2}}\left(b_{\geq2}\right),\cdots,\varepsilon_{i_{\ell}}\left(\sigma_{i_{\ell}}^{*}\cdots\sigma_{i_{2}}^{*}b_{\geq2}\right)\right)\\
 & =\left(\varepsilon_{i_{2}}\left(\sigma_{i_{1}}^{*}b\right),\cdots,\varepsilon_{i_{\ell}}\left(\sigma_{i_{\ell}}^{*}\cdots\sigma_{i_{2}}^{*}\sigma_{i_{1}}^{*}b\right)\right)=\left(0,\cdots,0\right)\in\mathbb{Z}_{\geq0}^{\ell-1}
\end{align*}
by definition of the Lusztig datum.

By induction hypothesis, we have 
\[
f_{\epsilon}^{\up}\left(\bm{c}_{\geq2},\bm{i}_{\geq2}\right)G^{\up}\left(b_{\geq2}\right)-G^{\up}\left(\nabla_{\bm{i}_{\geq2},\epsilon}^{\bm{c}_{\geq2}}\left(b_{\geq2}\right)\right)\in\sum_{L_{\epsilon}\left(b_{\geq2}^{'},\bm{i}_{\geq2}\right)<\bm{c}_{\geq2}}q\mathbb{Z}\left[q\right]G^{\up}\left(b_{\geq2}^{'}\right).
\]
with $c_{\geq2}=\left(c_{2},\cdots,c_{\ell}\right)\in\mathbb{Z}_{\geq0}^{\ell-1}$.
Since $\Uq^{-}\cap T_{i_{1}}^{-1}\Uq^{-}$ is spanned by the dual
canonical basis $\left\{ G^{\up}\left(b\right)\mid\varepsilon_{i_{1}}^{*}\left(b\right)=0\right\} $
and $f_{\epsilon}^{\up}\left(\bm{c}_{\geq2},\bm{i}_{\geq2}\right)\in\Uq^{-}\cap T_{i_{1}}^{-1}\Uq^{-}$
and $G^{\up}\left(b_{\geq2}\right)\in\Uq^{-}\cap T_{i_{1}}^{-1}\Uq^{-}$,
so we obtain that $\varepsilon_{i_{1}}^{*}\left(\nabla_{\bm{i}_{\geq2},\epsilon}^{\bm{c}_{\geq2}}\left(b_{\geq2}\right)\right)=0$
and $\varepsilon_{i_{1}}^{*}\left(b_{\geq2}^{'}\right)=0$.

We have 
\begin{align*}
f_{+1}^{\up}\left(\bm{c},\bm{i}\right)G^{\up}\left(b\right) & =\left(1-q_{i_{1}}^{2}\right)^{\left\langle h_{i_{1}},\xi\left(c_{\geq2},i_{\geq2}\right)+\mathrm{wt}\left(b\right)\right\rangle }f_{i_{1}}^{\left\{ c_{1}\right\} }T_{i_{1}}\left(f_{\epsilon}^{\up}\left(\bm{c}_{\geq2},\bm{i}_{\geq2}\right)G^{\mathrm{up}}\left(b_{\geq2}\right)\right)\\
 & \in\left(1-q_{i_{1}}^{2}\right)^{\left\langle h_{i_{1}},\xi\left(c_{\geq2},i_{\geq2}\right)+\mathrm{wt}\left(b\right)\right\rangle }f_{i_{1}}^{\left\{ c_{1}\right\} }\\
 & \times T_{i_{1}}\left(G^{\up}\left(\nabla_{\bm{i}_{\geq2},+1}^{\bm{c}_{\geq2}}\left(b_{\geq2}\right)\right)+\sum_{L_{+1}\left(b_{\geq2}^{'},i_{\geq2}\right)<c_{\geq2}}q\mathbb{Z}\left[q\right]G^{\up}\left(b_{\geq2}^{'}\right)\right)\\
 & =f_{i_{1}}^{\left\{ c_{1}\right\} }\left(G^{\up}\left(\sigma_{i_{1}}\nabla_{\bm{i}_{\geq2},+1}^{\bm{c}_{\geq2}}\left(b_{\geq2}\right)\right)+\sum_{L_{+1}\left(b_{\geq2}^{'},i_{\geq2}\right)<c_{\geq2}}q\mathbb{Z}\left[q\right]G^{\up}\left(\sigma_{i_{1}}b_{\geq2}^{'}\right)\right).
\end{align*}
We note that $\tilde{f}_{i_{1}}^{c_{1}}\sigma_{i_{1}}\nabla_{\bm{i}_{\geq2},+1}^{\bm{c}_{\geq2}}\left(b_{\geq2}\right)=\tilde{f}_{i_{1}}^{c_{1}}\sigma_{i_{1}}\tilde{f}_{i_{2}}^{c_{1}}\sigma_{i_{2}}\cdots\tilde{f}_{i_{\ell-1}}^{c_{\ell-1}}\sigma_{i_{\ell-1}}\tilde{f}_{i_{\ell}}^{c_{\ell}}\sigma_{i_{\ell}}\sigma_{i_{\ell}}^{*}\cdots\sigma_{i_{2}}^{*}\left(b_{\geq2}\right)=\nabla_{\bm{i},+1}^{\bm{c}}\left(b\right)$
and 
\begin{align*}
f_{i_{1}}^{\left\{ c_{1}\right\} }G^{\up}\left(\sigma_{i_{1}}\nabla_{\bm{i}_{\geq2},+1}^{\bm{c}_{\geq2}}\left(b_{\geq2}\right)\right) & \in G^{\up}\left(\nabla_{\bm{i},+1}^{\bm{c}}\left(b\right)\right)+\sum_{\varepsilon_{i_{1}}\left(b''\right)<c_{1}}q\mathbb{Z}\left[q\right]G^{\up}\left(b''\right),\\
f_{i_{1}}^{\left\{ c_{1}\right\} }G^{\up}\left(\sigma_{i_{1}}b_{\geq2}^{'}\right) & \in G^{\up}\left(\tilde{f}_{i_{1}}^{c_{1}}\sigma_{i_{1}}b_{\geq2}^{'}\right)+\sum_{\varepsilon_{i_{1}}\left(b''\right)<c_{1}}q\mathbb{Z}\left[q\right]G^{\up}\left(b''\right)
\end{align*}
by Theorem \ref{thm:multdualChevalley}, $f_{i_{1}}^{\left\{ c_{1}\right\} }\left(G^{\up}\left(\sigma_{i_{1}}\nabla_{\bm{i}_{\geq2},+1}^{\bm{c}_{\geq2}}\left(b_{\geq2}\right)\right)+\sum_{L_{+1}\left(b_{\geq2}^{'},i_{\geq2}\right)<c_{\geq2}}q\mathbb{Z}\left[q\right]G^{\up}\left(\sigma_{i_{1}}b_{\geq2}^{'}\right)\right)$
can be written in the following form:
\[
G^{\up}\left(\nabla_{\bm{i},+1}^{\bm{c}}\left(b\right)\right)+\sum_{L_{+1}\left(b_{\geq2}^{'},i_{\geq2}\right)<\bm{c}_{\geq2}}q\mathbb{Z}\left[q\right]G^{\up}\left(\tilde{f}_{i_{1}}^{c_{1}}\sigma_{i_{1}}b_{\geq2}^{'}\right)+\sum_{\varepsilon_{i_{1}}\left(b''\right)<c_{1}}q\mathbb{Z}\left[q\right]G^{\up}\left(b''\right).
\]

Since we have $\left(c_{2}',\cdots,c_{\ell}'\right)=L_{+1}\left(b_{\geq2}^{'},i_{\geq2}\right)<\bm{c}_{\geq2}$,
we obtain $L_{+1}\left(\tilde{f}_{i_{1}}^{c_{1}}\sigma_{i_{1}}b_{\geq2}^{'},\bm{i}\right)=\left(c_{1},c_{2}',\cdots,c_{\ell}'\right)<\bm{c}$
and we have $L_{+1}\left(b'',\bm{i}\right)=\left(\varepsilon_{i_{1}}\left(b''\right),\cdots\right)<L_{+1}\left(b,\bm{i}\right)=\left(c_{1},c_{2},\cdots,c_{\ell}\right)$
by $\varepsilon_{i_{1}}\left(b^{''}\right)<c_{1}$. We we obtain the
claim. 
\end{proof}
Using the transition Theorem \ref{thm:qunip} (2) from \PBW basis
to the dual canonical basis, we obtain the following multiplicity-free
result.
\begin{thm}
Let $w\in W$, $\bm{i}=\left(i_{1},\cdots,i_{\ell}\right)\in I\left(w\right)$
and $\epsilon\in\left\{ \pm1\right\} $.\label{thm:main2}

For $\bm{c}\in\mathbb{Z}_{\geq0}^{\ell}$ and $b\in\mathscr{B}\left(>w,\epsilon\right)$,
we have 
\begin{align*}
G^{\up}\left(b_{\epsilon}\left(\bm{c},\bm{i}\right)\right)G^{\up}\left(b\right) & \in G^{\up}\left(\nabla_{\bm{i},\epsilon}^{\bm{c}}\left(b\right)\right)+\sum_{L_{\epsilon}\left(b^{'},\bm{i}\right)<c}q\mathbb{Z}\left[q\right]G^{\up}\left(b^{'}\right)\;\text{if}\;\epsilon=+1,\\
G^{\up}\left(b\right)G^{\up}\left(b_{\epsilon}\left(\bm{c},\bm{i}\right)\right) & \in G^{\up}\left(\nabla_{\bm{i},\epsilon}^{\bm{c}}\left(b\right)\right)+\sum_{L_{\epsilon}\left(b^{'},\bm{i}\right)<c}q\mathbb{Z}\left[q\right]G^{\up}\left(b^{'}\right)\;\text{if}\;\epsilon=-1.
\end{align*}

\end{thm}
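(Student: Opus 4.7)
The plan is to deduce this from the \PBW multiplication formula in Theorem \ref{thm:dualPBWmult} by inverting the triangular transition between the dual canonical basis and the dual \PBW basis recorded in Theorem \ref{thm:qunip}(2). I will treat the case $\epsilon=+1$; the case $\epsilon=-1$ then follows by applying the star anti-involution $*$, which swaps left and right multiplication and intertwines $\sigma_i$ with $\sigma_i^*$, hence $\nabla_{\bm{i},+1}^{\bm{c}}$ with $\nabla_{\bm{i},-1}^{\bm{c}}$.

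The first step is to invert the triangular expansion
\[
f^{\up}_{\epsilon}(\bm{c},\bm{i}) = G^{\up}(b_{\epsilon}(\bm{c},\bm{i})) + \sum_{\bm{c}'<\bm{c}} d^{\bm{i}}_{\bm{c},\bm{c}'}(q)\, G^{\up}(b_{\epsilon}(\bm{c}',\bm{i})),\qquad d^{\bm{i}}_{\bm{c},\bm{c}'}(q)\in q\mathbb{Z}[q].
\]
Since this is the identity plus a strictly lower-triangular matrix with entries in $q\mathbb{Z}[q]$, the geometric-series inversion yields
\[
G^{\up}(b_{\epsilon}(\bm{c},\bm{i})) = f^{\up}_{\epsilon}(\bm{c},\bm{i}) + \sum_{\bm{c}''<\bm{c}} e^{\bm{i}}_{\bm{c},\bm{c}''}(q)\, f^{\up}_{\epsilon}(\bm{c}'',\bm{i}),\qquad e^{\bm{i}}_{\bm{c},\bm{c}''}(q)\in q\mathbb{Z}[q].
\]
Multiplying by $G^{\up}(b)$ on the right and applying Theorem \ref{thm:dualPBWmult} term-by-term produces
\begin{align*}
f^{\up}_{\epsilon}(\bm{c},\bm{i})\, G^{\up}(b) &\in G^{\up}(\nabla_{\bm{i},+1}^{\bm{c}}(b)) + \sum_{L_{+1}(b',\bm{i})<\bm{c}} q\mathbb{Z}[q]\, G^{\up}(b'), \\
f^{\up}_{\epsilon}(\bm{c}'',\bm{i})\, G^{\up}(b) &\in G^{\up}(\nabla_{\bm{i},+1}^{\bm{c}''}(b)) + \sum_{L_{+1}(b',\bm{i})<\bm{c}''} q\mathbb{Z}[q]\, G^{\up}(b').
\end{align*}

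The decisive remaining ingredient is the identity
\[
L_{+1}\bigl(\nabla_{\bm{i},+1}^{\bm{c}''}(b),\bm{i}\bigr)=\bm{c}''
\]
for any $\bm{c}''$ and any $b\in\mathscr{B}(>w,+1)$. This follows by unwinding the definitions: since $\varepsilon_{i_1}(\sigma_{i_1}(y))=0$ for any admissible $y$, the leftmost factor $\tilde{f}_{i_1}^{c''_1}\sigma_{i_1}$ produces $\varepsilon_{i_1}=c''_1$; then $\hat{\sigma}_{i_1}^{*}=\sigma_{i_1}^{*}\circ\tilde{e}_{i_1}^{\max}$ strips off $\tilde{f}_{i_1}^{c''_1}$ and undoes $\sigma_{i_1}$, and iterating gives successively $c''_2,\dots,c''_\ell$. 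Granted this, assembling the displays above shows that the leading contribution $G^{\up}(\nabla_{\bm{i},+1}^{\bm{c}}(b))$ comes only from the first line with coefficient $1$, while every other contribution is either a tail term with basis element of Lusztig datum strictly less than $\bm{c}$ (or than some $\bm{c}''<\bm{c}$) and coefficient in $q\mathbb{Z}[q]$, or of the form $e^{\bm{i}}_{\bm{c},\bm{c}''}(q)\,G^{\up}(\nabla_{\bm{i},+1}^{\bm{c}''}(b))$ whose basis element has Lusztig datum $\bm{c}''<\bm{c}$ and coefficient in $q\mathbb{Z}[q]$. All error terms are therefore absorbed by $\sum_{L_{+1}(b',\bm{i})<\bm{c}} q\mathbb{Z}[q]\, G^{\up}(b')$, and the theorem follows. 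The only non-routine point, and hence the main obstacle, is the verification of the Lusztig-datum identity above; this is essentially built into the definition of $\nabla$, but must be checked carefully since it is the single place where the $\nabla$-construction interacts with the extraction map $L_{\epsilon}(-,\bm{i})$.
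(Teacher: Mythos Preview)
Your proposal is correct and follows exactly the route the paper indicates: the paper's entire argument is the single sentence ``Using the transition Theorem~\ref{thm:qunip}(2) from \PBW basis to the dual canonical basis, we obtain the following multiplicity-free result,'' and you have simply spelled out the triangular inversion and the bookkeeping of Lusztig data that this sentence leaves implicit. The one point you flag as ``the main obstacle,'' namely $L_{+1}\bigl(\nabla_{\bm{i},+1}^{\bm{c}''}(b),\bm{i}\bigr)=\bm{c}''$, is indeed the content that makes the argument go through, and your verification via $\varepsilon_{i_1}\circ\sigma_{i_1}=0$ together with $\hat{\sigma}_{i_1}^{*}\circ\tilde{f}_{i_1}^{c}\circ\sigma_{i_1}=\mathrm{id}$ on the relevant domain is exactly the recursive mechanism already established in the proof of Theorem~\ref{thm:dualPBWmult}.
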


\subsubsection{}
\begin{defn}
Let $w\in W$, $\bm{i}=\left(i_{1},\cdots,i_{\ell}\right)\in I\left(w\right)$
and $\epsilon\in\left\{ \pm1\right\} $. We define maps $\tau_{\leq w,\epsilon}\colon\mathscr{B}\left(\infty\right)\to\mathscr{B}\left(\leq w,\epsilon\right)$
and $\tau_{>w,\epsilon}\colon\mathscr{B}\left(\infty\right)\to\mathscr{B}\left(>w,\epsilon\right)$
by
\begin{align*}
\tau_{\leq w,\epsilon}\left(b\right) & =b_{\epsilon}\left(L_{\epsilon}\left(b,\bm{i}\right),\bm{i}\right),\\
\tau_{>w,\epsilon}\left(b\right) & =\begin{cases}
\sigma_{i_{1}}\cdots\sigma_{i_{\ell}}\hat{\sigma}_{i_{\ell}}^{*}\cdots\hat{\sigma}_{i_{1}}^{*}\left(b\right) & \text{if}\;\epsilon=+1,\\
\sigma_{i_{1}}^{*}\cdots\sigma_{i_{\ell}}^{*}\hat{\sigma}_{i_{\ell}}\cdots\hat{\sigma}_{i_{1}}\left(b\right) & \text{if}\;\epsilon=-1.
\end{cases}
\end{align*}
We note that the independence of the maps $\tau_{\leq w,\epsilon}$
and $\tau_{>w,\epsilon}$ on a reduced word $\bm{i}\in I\left(w\right)$
of a Weyl group element $w$ can be shown in \cite[Theorem 4.4]{BKT:affine}
using representation theory of the preprojective algebra and the torsion
pair in symmetric case. Using the folding argument , it also can be
proved in general.\end{defn}
\begin{prop}
We have a bijection as sets:
\[
\Omega_{w}:=\left(\tau_{\leq w,\epsilon},\tau_{>w,\epsilon}\right)\colon\mathscr{B}\left(\infty\right)\to\mathscr{B}\left(\leq w,\epsilon\right)\times\mathscr{B}\left(>w,\epsilon\right).
\]

\end{prop}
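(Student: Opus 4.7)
The plan is to construct an explicit inverse of $\Omega_w$, guided by the leading-term structure in Theorem \ref{thm:main2}. Specifically, I would define
\[
\Phi \colon \mathscr{B}(\leq w, \epsilon) \times \mathscr{B}(>w, \epsilon) \to \mathscr{B}(\infty), \qquad \Phi(b_\epsilon(\bm{c}, \bm{i}), b_2) := \nabla^{\bm{c}}_{\bm{i}, \epsilon}(b_2).
\]
The motivation is that Theorem \ref{thm:main2} identifies $G^{\up}(\nabla^{\bm{c}}_{\bm{i}, \epsilon}(b_2))$ as the unique top-order term in the expansion of the product $G^{\up}(b_\epsilon(\bm{c}, \bm{i})) G^{\up}(b_2)$ in the dual canonical basis, and this leading term is the natural candidate for recombining the two factors of the tensor product decomposition.

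Before verifying bijectivity, I would check that $\Omega_w$ lands in the claimed target. The inclusion $\tau_{\leq w, \epsilon}(b) \in \mathscr{B}(\leq w, \epsilon)$ is immediate from the definition of $b_\epsilon(-, \bm{i})$ and Theorem \ref{thm:qunip}. For $\tau_{>w, \epsilon}(b) \in \mathscr{B}(>w, \epsilon)$, one computes the components of the Lusztig datum $L_\epsilon(\tau_{>w, \epsilon}(b), \bm{i})$ and shows they vanish one by one, exploiting the cancellations $\sigma_{i_k}^* \sigma_{i_k} = \operatorname{id}$ on $\{\varepsilon_{i_k}^* = 0\}$ after applying the preliminary $\hat{\sigma}_{i_j}^*$ operators. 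Bijectivity then reduces to two combinatorial identities, both provable by induction on $\ell(w)$: the identity $\nabla^{L_\epsilon(b, \bm{i})}_{\bm{i}, \epsilon}(\tau_{>w, \epsilon}(b)) = b$ for all $b \in \mathscr{B}(\infty)$ (yielding $\Phi \circ \Omega_w = \operatorname{id}$), and the pair of identities $L_\epsilon(\nabla^{\bm{c}}_{\bm{i}, \epsilon}(b_2), \bm{i}) = \bm{c}$ and $\tau_{>w, \epsilon}(\nabla^{\bm{c}}_{\bm{i}, \epsilon}(b_2)) = b_2$ for $b_2 \in \mathscr{B}(>w, \epsilon)$ and $\bm{c} \in \mathbb{Z}_{\geq 0}^\ell$ (yielding $\Omega_w \circ \Phi = \operatorname{id}$). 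The inductive step peels off the first letter $i_1$, using the decomposition $b = \tilde{f}_{i_1}^{\varepsilon_{i_1}(b)} \sigma_{i_1}(\hat{\sigma}_{i_1}^* b)$ to reduce the length-$\ell$ statement to the length-$(\ell-1)$ statement for $w_{\geq 2} = s_{i_2} \cdots s_{i_\ell}$.

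The main obstacle will be the combinatorial bookkeeping with partial maps: the Saito crystal reflections $\sigma_{i_k}, \sigma_{i_k}^*$ are only defined on the subsets $\{\varepsilon_{i_k}^* = 0\}$ and $\{\varepsilon_{i_k} = 0\}$ respectively, and their extensions $\hat{\sigma}_{i_k}, \hat{\sigma}_{i_k}^*$ do not invert them in general. One must track carefully at each inductive step which intermediate crystal elements lie in the correct subset, so that the relevant composition $\sigma_{i_k}^* \sigma_{i_k}$ (or its adjoint) can be replaced by the identity, and so that the Kashiwara operators $\tilde{f}_{i_k}^{c_k}$ conjugate correctly across the Saito reflections. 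Once this bookkeeping is controlled, the identities follow from the same crystal-operator mechanics already used in the proof of Theorem \ref{thm:dualPBWmult}, and the bijectivity of $\Omega_w$ follows formally.
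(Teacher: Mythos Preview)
The paper does not actually supply a proof of this proposition: it is stated without argument, the bijection being implicitly taken as known from the cited work of Saito and Baumann--Kamnitzer--Tingley on Lusztig data and Saito reflections. Your approach---defining the inverse explicitly as $\Phi(b_\epsilon(\bm{c},\bm{i}),b_2)=\nabla^{\bm{c}}_{\bm{i},\epsilon}(b_2)$ and then verifying $\Phi\circ\Omega_w=\mathrm{id}$ and $\Omega_w\circ\Phi=\mathrm{id}$ by induction on $\ell(w)$ via peeling off the first letter---is correct and is exactly the standard combinatorial verification. The domain bookkeeping you flag (ensuring each $\sigma_{i_k}^*$ or $\sigma_{i_k}$ is applied only where it is defined, so that $\sigma_{i_k}\sigma_{i_k}^*$ genuinely collapses to the identity) is the only real content, and it is routine once one observes that $\sigma_{i_1}\hat{\sigma}_{i_1}^*(b)=\tilde{e}_{i_1}^{\varepsilon_{i_1}(b)}b$ and that membership in $\mathscr{B}(>w,\epsilon)$ is precisely the vanishing of the Lusztig datum needed for the unhatted reflections to be defined.

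One minor remark: your motivation via Theorem~\ref{thm:main2} is heuristically useful but logically unnecessary---the proposition is a purely crystal-combinatorial statement and its proof does not require any algebraic input from the multiplication formula. Indeed, in the paper the logical flow runs the other way: the bijection is used (together with Theorem~\ref{thm:main2}) to deduce the surjectivity of the multiplication map in Theorem~\ref{thm:dec}.
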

We prove the multiplication property of the dual canonical basis element
between $\Uq^{-}\left(\leq w,\epsilon\right)$ and $\Uq^{-}\left(>w,\epsilon\right)$.
\begin{thm}
Let $w\in W$, $\bm{i}=\left(i_{1},\cdots,i_{\ell}\right)\in I\left(w\right)$
and $\epsilon\in\left\{ \pm1\right\} $. For $b\in\mathscr{B}\left(\infty\right)$,
we have
\begin{align*}
G^{\up}\left(\tau_{\leq w,\epsilon}\left(b\right)\right)G^{\up}\left(\tau_{>w,\epsilon}\left(b\right)\right) & \in G^{\up}\left(b\right)+\sum_{L_{\epsilon}\left(b',\bm{i}\right)<L_{\epsilon}\left(b,\bm{i}\right)}q\mathbb{Z}\left[q\right]G^{\up}\left(b^{'}\right)\;\text{if}\;\epsilon=+1,\\
G^{\up}\left(\tau_{>w,\epsilon}\left(b\right)\right)G^{\up}\left(\tau_{\leq w,\epsilon}\left(b\right)\right) & \in G^{\up}\left(b\right)+\sum_{L_{\epsilon}\left(b',\bm{i}\right)<L_{\epsilon}\left(b,\bm{i}\right)}q\mathbb{Z}\left[q\right]G^{\up}\left(b^{'}\right)\;\text{if}\;\epsilon=-1.
\end{align*}
\end{thm}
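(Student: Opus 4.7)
The plan is to reduce directly to the multiplication formula of Theorem~\ref{thm:main2} by a judicious choice of parameters. I treat the case $\epsilon=+1$ first; the case $\epsilon=-1$ follows by transport via the algebra anti-involution $*$, which interchanges $\sigma_i\leftrightarrow\sigma_i^{*}$, $\tilde f_i\leftrightarrow\tilde f_i^{*}$, $\varepsilon_i\leftrightarrow\varepsilon_i^{*}$ and reverses the order of multiplication, and which preserves the dual canonical basis up to the involution $*$ on $\mathscr{B}(\infty)$.

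Set $\bm{c}:=L_{+1}(b,\bm{i})$. By definition of $\tau_{\leq w,+1}$ we have $\tau_{\leq w,+1}(b)=b_{+1}(\bm{c},\bm{i})$, and by construction $\tau_{>w,+1}(b)\in\mathscr{B}(>w,+1)$. Applying Theorem~\ref{thm:main2} with $\bm{c}$ as above and with the element $\tau_{>w,+1}(b)$ in the role of its second argument yields
\[
G^{\up}(\tau_{\leq w,+1}(b))\,G^{\up}(\tau_{>w,+1}(b))\in G^{\up}\bigl(\nabla_{\bm{i},+1}^{\bm{c}}(\tau_{>w,+1}(b))\bigr)+\sum_{L_{+1}(b',\bm{i})<\bm{c}}q\mathbb{Z}[q]\,G^{\up}(b'),
\]
which has exactly the error-term shape required (after substituting $\bm{c}=L_{+1}(b,\bm{i})$ in the summation). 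The theorem thus reduces to the crystal-theoretic identity
\[
\nabla_{\bm{i},+1}^{L_{+1}(b,\bm{i})}\bigl(\tau_{>w,+1}(b)\bigr)=b.
\]

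To verify this identity, I would write $b^{(k)}:=\hat\sigma_{i_k}^{*}\cdots\hat\sigma_{i_1}^{*}(b)$ for $0\le k\le\ell$, so that from the definitions of $\hat\sigma_{i_k}^{*}$ and $L_{+1}$ one reads off $c_k=\varepsilon_{i_k}(b^{(k-1)})$ and $b^{(k)}=\sigma_{i_k}^{*}\tilde e_{i_k}^{c_k}(b^{(k-1)})$. Substituting $\tau_{>w,+1}(b)=\sigma_{i_1}\cdots\sigma_{i_\ell}(b^{(\ell)})$ and applying $\nabla_{\bm{i},+1}^{\bm{c}}$ from the innermost factor outward, the string $\sigma_{i_\ell}^{*}\cdots\sigma_{i_1}^{*}$ cancels with $\sigma_{i_1}\cdots\sigma_{i_\ell}$ via repeated use of $\sigma_i^{*}\sigma_i=\id$ on $\{\varepsilon_i^{*}=0\}$, leaving $b^{(\ell)}$. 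Then each successive block $\tilde f_{i_k}^{c_k}\sigma_{i_k}$ applied to $b^{(k)}$ produces first $\sigma_{i_k}(b^{(k)})=\tilde e_{i_k}^{c_k}(b^{(k-1)})$ using $\sigma_{i_k}\sigma_{i_k}^{*}=\id$ on $\{\varepsilon_{i_k}=0\}$, and then $\tilde f_{i_k}^{c_k}\tilde e_{i_k}^{c_k}(b^{(k-1)})=b^{(k-1)}$. Iterating from $k=\ell$ down to $k=1$ returns $b^{(0)}=b$, as required.

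The main obstacle is the bookkeeping of well-definedness at each stage of the cancellation: one must verify that $\varepsilon_{i_{k-1}}^{*}(\sigma_{i_k}\cdots\sigma_{i_\ell}(b^{(\ell)}))=0$ for all $k$, so that the reflections $\sigma_{i_{k-1}}$ may be applied; this is precisely the content of the characterization of $\mathscr{B}(>w,+1)$ given in Proposition~\ref{prop:cofinsubgroup} and its proof, namely that the vanishing of the Lusztig datum translates into the successive vanishing conditions for the Saito reflections. Once this point is settled, the asserted crystal identity holds and the theorem follows as a direct consequence of Theorem~\ref{thm:main2}; the statement for $\epsilon=-1$ is then obtained verbatim by applying $*$.
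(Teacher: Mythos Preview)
Your argument is correct and genuinely more direct than the paper's. The paper proves the theorem by a fresh induction on $\ell(w)$: it peels off $f_{i_1}^{\{\varepsilon_{i_1}(b)\}}$ via Theorem~\ref{thm:multdualChevalley}, invokes the induction hypothesis for $w_{\geq 2}=s_{i_2}\cdots s_{i_\ell}$ applied to $\hat\sigma_{i_1}^{*}b$, transports through $T_{i_1}$ using Theorem~\ref{thm:braidcanbasis}, and reassembles. This largely reproduces the inductive machinery already inside Theorem~\ref{thm:main2}. You instead recognize that the theorem is an immediate specialization of Theorem~\ref{thm:main2} with $\bm{c}=L_{+1}(b,\bm{i})$ and second argument $\tau_{>w,+1}(b)\in\mathscr{B}(>w,+1)$, once one verifies the purely crystal-theoretic identity $\nabla_{\bm{i},+1}^{\,L_{+1}(b,\bm{i})}(\tau_{>w,+1}(b))=b$; your telescoping verification of this identity is correct. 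Two minor remarks: the well-definedness you flag (that each intermediate $\sigma_{i_k}$ in $\tau_{>w,+1}$ is applied to an element with $\varepsilon_{i_k}^{*}=0$) is exactly what the paper takes for granted in defining $\tau_{>w,\epsilon}$ and $\Omega_w$, so your caveat is appropriate but not a gap; and the reference you want for the vanishing conditions is really the characterization $\mathscr{B}(>w,+1)=\{L_{+1}(\,\cdot\,,\bm{i})=0\}$ rather than Proposition~\ref{prop:cofinsubgroup} itself.
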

\begin{proof}
We prove $\epsilon=+1$ case. We prove by the induction on the length
$\ell\left(w\right)$.

First we have 
\begin{align*}
 & G^{\up}\left(b\right)-f_{i_{1}}^{\left\{ \varepsilon_{i_{1}}\left(b\right)\right\} }G^{\up}\left(\tilde{e}_{i_{1}}^{\varepsilon_{i_{1}}\left(b\right)}b\right)\\
= & G^{\up}\left(b\right)-\left(1-q_{i_{1}}^{2}\right)^{\left\langle h_{i_{1}},\mathrm{wt}\left(\hat{\sigma}_{i_{1}}^{*}b\right)\right\rangle }f_{i_{1}}^{\left\{ \varepsilon_{i_{1}}\left(b\right)\right\} }T_{i_{1}}G^{\up}\left(\hat{\sigma}_{i_{1}}^{*}b\right)\in\sum_{\varepsilon_{i_{1}}\left(b'\right)<\varepsilon_{i_{1}}\left(b\right)}q\mathbb{Z}\left[q\right]G^{\up}\left(b'\right)
\end{align*}

By Theorem \ref{thm:main2}, we only have to compute the product $\left(1-q_{i}^{2}\right)^{\left\langle h_{i},\mathrm{wt}\left(\hat{\sigma}_{i}^{*}b\right)\right\rangle }f_{i}^{\left\{ \varepsilon_{i}\left(b\right)\right\} }T_{i}G^{\up}\left(\hat{\sigma}_{i}^{*}b\right)\times G^{\up}\left(\tau_{>w,+1}\left(b\right)\right)$.

We note that 
\[
G^{\up}\left(\tau_{>w,+1}\left(b\right)\right)=\left(1-q_{i}^{2}\right)^{\left\langle h_{i_{1}},\mathrm{wt}\left(\tau_{>w_{\geq2}}\left(\hat{\sigma}_{i_{1}}^{*}b\right)\right)\right\rangle }T_{i_{1}}G^{\up}\left(\tau_{>w_{\geq2}}\left(\hat{\sigma}_{i_{1}}^{*}b\right)\right)
\]
where $w_{\geq2}=s_{i_{2}}\cdots s_{i_{\ell}}$.

By the induction hypothesis, we have
\[
G^{\up}\left(\hat{\sigma}_{i_{1}}^{*}b\right)-G^{\up}\left(\tau_{\leq w_{\geq2}}\left(\hat{\sigma}_{i_{1}}^{*}b\right)\right)G^{\up}\left(\tau_{>w_{\geq2}}\left(\hat{\sigma}_{i_{1}}^{*}b\right)\right)\in\sum_{L_{+1}\left(b'',\bm{i}_{\geq2}\right)<L_{+1}\left(\hat{\sigma}_{i_{1}}^{*}b,\bm{i}_{\geq2}\right)}q\mathbb{Z}\left[q\right]G^{\up}\left(b''\right)
\]
where $i_{\geq2}=\left(i_{2},\cdots,i_{\ell}\right)\in I\left(s_{i_{2}}\cdots s_{i_{\ell}}\right)$.

Applying $\left(1-q_{i_{1}}^{2}\right)^{\left\langle h_{i_{1}},\mathrm{wt}\left(\hat{\sigma}_{i_{1}}^{*}b\right)\right\rangle }T_{i_{1}}$,
we obtain
\[
G^{\up}\left(\sigma_{i_{1}}\hat{\sigma}_{i_{1}}^{*}b\right)-G^{\up}\left(\sigma_{i_{1}}\tau_{\leq w_{\geq2}}\left(\hat{\sigma}_{i_{1}}^{*}b\right)\right)G^{\up}\left(\sigma_{i_{1}}\tau_{>w_{\geq2}}\left(\hat{\sigma}_{i_{1}}^{*}b\right)\right)\in\sum_{L_{+1}\left(b'',\bm{i}_{\geq2}\right)<L_{+1}\left(\hat{\sigma}_{i_{1}}^{*}b,\bm{i}_{\geq2}\right)}q\mathbb{Z}\left[q\right]G^{\up}\left(\sigma_{i_{1}}b''\right).
\]

We note that $\tilde{e}_{i_{1}}^{\varepsilon_{i_{1}}\left(b\right)}b=\sigma_{i_{1}}\hat{\sigma}_{i_{1}}^{*}b$.
Multiplying $f_{i_{1}}^{\left\{ \varepsilon_{i_{1}}\left(b\right)\right\} }$
from left to the second term, we have 
\begin{align*}
 & f_{i_{1}}^{\left\{ \varepsilon_{i_{1}}\left(b\right)\right\} }G^{\up}\left(\sigma_{i_{1}}\tau_{\leq w_{\geq2}}\left(\hat{\sigma}_{i_{1}}^{*}b\right)\right)G^{\up}\left(\sigma_{i_{1}}\tau_{>w_{\geq2}}\left(\hat{\sigma}_{i_{1}}^{*}b\right)\right)\\
\in & G^{\up}\left(\tilde{f}_{i_{1}}^{\varepsilon_{i_{1}}\left(b\right)}\sigma_{i_{1}}\tau_{\leq w_{\geq2}}\left(\hat{\sigma}_{i_{1}}^{*}b\right)\right)G^{\up}\left(\sigma_{i_{1}}\tau_{>w_{\geq2}}\left(\hat{\sigma}_{i_{1}}^{*}b\right)\right)+\sum_{\varepsilon_{i_{1}}\left(b'\right)<\varepsilon_{i_{1}}\left(b\right)}q\mathbb{Z}\left[q\right]G^{\up}\left(b'\right),
\end{align*}
then we obtain
\begin{gather*}
f_{i_{1}}^{\left\{ \varepsilon_{i_{1}}\left(b\right)\right\} }\left(G^{\up}\left(\tilde{e}_{i_{1}}^{\varepsilon_{i_{1}}\left(b\right)}b\right)-G^{\up}\left(\sigma_{i_{1}}\tau_{\leq w_{\geq2}}\left(\hat{\sigma}_{i_{1}}^{*}b\right)\right)G^{\up}\left(\sigma_{i_{1}}\tau_{>w_{\geq2}}\left(\hat{\sigma}_{i_{1}}^{*}b\right)\right)\right)\\
\in\sum_{\varepsilon_{i_{1}}\left(b'\right)<\varepsilon_{i_{1}}\left(b\right)}q\mathbb{Z}\left[q\right]G^{\up}\left(b'\right)+\sum_{L_{+1}\left(b'',\bm{i}_{\geq2}\right)<L_{+1}\left(\hat{\sigma}_{i_{1}}^{*}b,\bm{i}_{\geq2}\right)}q\mathbb{Z}\left[q\right]G^{\up}\left(\tilde{f}_{i_{1}}^{\varepsilon_{i_{1}}\left(b\right)}\sigma_{i_{1}}b''\right).
\end{gather*}

By the construction, we have $\tilde{f}_{i_{1}}^{\varepsilon_{i_{1}}\left(b\right)}\sigma_{i_{1}}\tau_{\leq w_{\geq2}}\left(\hat{\sigma}_{i_{1}}^{*}b\right)=\tau_{\leq w}\left(b\right)$
and $\sigma_{i_{1}}\tau_{>w_{\geq2}}\left(\hat{\sigma}_{i_{1}}^{*}b\right)=\tau_{>w}\left(b\right)$,
hence we obtain the claim of the theorem.

\begin{NB}

The following is the original rough computation. Essentially same
as above.

Let $L_{\epsilon}\left(b,\bm{i}\right)=\left(c_{1},\cdots,c_{\ell}\right)$.
Since we have 
\begin{align*}
G^{\up}\left(\tau_{\leq w,+1}\left(b\right)\right) & =\\
 & f_{i_{1}}^{\left[c_{1}\right]}\left(1-q_{i_{1}}^{2}\right)^{\left\langle h_{i_{1}},\xi\left(\bm{c}_{\geq2},\bm{i}_{\ge2}\right)\right\rangle }T_{i_{1}}G^{\up}\left(\tau_{\leq w_{\geq2}}\left(b\right)\right)+\sum_{b^{'}}q\mathbb{Z}\left[q\right]G^{\mathrm{up}}\left(b'\right)
\end{align*}

We have 
\begin{align*}
G^{\up}\left(\tau_{\leq w}\left(b\right)\right)\cdot G^{\up}\left(\tau_{>w}\left(b\right)\right) & =\left(f_{i_{1}}^{\left[c_{1}\right]}\left(1-q_{i_{1}}^{2}\right)^{?}T_{i_{1}}G^{\up}\left(\tau_{\leq\bm{w_{\geq2}}}\left(\hat{\sigma}_{i_{1}}^{*}b\right)\right)\right)\cdot G^{\up}\left(\tau_{>w}\left(b\right)\right)+?\\
 & =f_{i_{1}}^{\left[c_{1}\right]}\left(1-q_{i_{1}}^{2}\right)^{?+?}T_{i_{1}}\left(G^{\up}\left(\tau_{\leq\bm{w_{\geq2}}}\left(\hat{\sigma}_{i_{1}}^{*}b\right)\right)\cdot G^{\up}\left(\sigma_{i_{1}}^{*}\tau_{>w}\left(b\right)\right)\right)+?\\
 & =f_{i_{1}}^{\left[c_{1}\right]}\left(1-q_{i_{1}}^{2}\right)^{?+?}T_{i_{1}}\left(G^{\up}\left(\tau_{\leq w_{\geq2}}\left(\hat{\sigma}_{i_{1}}^{*}b\right)\right)G^{\up}\left(\tau_{>\bm{w_{\geq2}}}\left(\hat{\sigma}_{i_{1}}^{*}\left(b\right)\right)\right)\right)+?\\
 & =f_{i_{1}}^{\left[c_{1}\right]}\left(1-q_{i_{1}}^{2}\right)^{?+?}T_{i_{1}}\left(G^{\up}\left(\hat{\sigma}_{i_{1}}^{*}\left(b\right)\right)+\sum_{L_{\bm{w_{\geq2}}}\left(b^{'}\right)<_{w_{\geq2}}L_{\bm{w_{\geq2}}}\left(\hat{\sigma}_{i_{1}}^{*}\left(b\right)\right)}q\mathbb{Z}\left[q\right]G^{\up}\left(b^{'}\right)\right)+?\\
 & =f_{i_{1}}^{\left[c_{1}\right]}\left(G^{\up}\left(\sigma_{i_{1}}\hat{\sigma}_{i_{1}}^{*}\left(b\right)\right)+\sum_{L_{\bm{w_{\geq2}}}\left(b^{'}\right)<_{w_{\geq2}}L_{\bm{w_{\geq2}}}\left(\hat{\sigma}_{i_{1}}^{*}\left(b\right)\right)}q\mathbb{Z}\left[q\right]G^{\up}\left(\sigma_{i_{1}}b^{'}\right)\right)+?\\
 & =G^{\up}\left(\tilde{f}_{i_{1}}^{c_{1}}\sigma_{i_{1}}\hat{\sigma}_{i_{1}}^{*}\left(b\right)\right)+\sum_{L_{\bm{w_{\geq2}}}\left(b^{'}\right)<_{w_{\geq2}}L_{\bm{w_{\geq2}}}\left(\hat{\sigma}_{i_{1}}^{*}\left(b\right)\right)}q\mathbb{Z}\left[q\right]G^{\up}\left(\tilde{f}_{i_{1}}^{c_{1}}\sigma_{i_{1}}b^{'}\right)+??\\
\end{align*}

\end{NB}
\end{proof}

\subsection{Application}

We give a slight refinement of Lusztig's result \cite[Proposition 8.3]{Lus:braid}
in the dual canonical basis. The following can be shown in a similar
manner using the multiplicity-free property of the multiplications
of a triple of the dual canonical basis elements, so we only state
the claims.
\begin{thm}
Let $w$ be a Weyl group element, $\bm{i}=\left(i_{1},\cdots,i_{\ell}\right)\in I\left(w\right)$
and $p\in\left[0,\ell\right]$ be an integer. We consider the following
intersection:
\[
\Uq^{-}\cap T_{s_{i_{p+1}}\cdots s_{i_{\ell}}}\Uq^{-}\cap T_{s_{i_{1}}\cdots s_{i_{p}}}^{-1}\Uq^{-}=\left(\Uq^{-}\cap T_{s_{i_{p+1}}\cdots s_{i_{\ell}}}\Uq^{-}\right)\cap\left(\Uq^{-}\cap T_{s_{i_{1}}\cdots s_{i_{p}}}^{-1}\Uq^{-}\right)
\]

\textup{(1)} The subalgebra $\Uq^{-}\cap T_{s_{i_{p+1}}\cdots s_{i_{\ell}}}\Uq^{-}\cap T_{s_{i_{1}}\cdots s_{i_{p}}}^{-1}\Uq^{-}$
is compatible with the dual canonical basis, that is there exists
a subset $\mathscr{B}\left(\Uq^{-}\cap T_{i_{p+1}}\cdots T_{i_{\ell}}\Uq^{-}\cap T_{s_{i_{1}}\cdots s_{i_{p}}}^{-1}\Uq^{-}\right)\subset\mathscr{B}\left(\infty\right)$
such that 
\[
\Uq^{-}\cap T_{s_{i_{p+1}}\cdots s_{i_{\ell}}}\Uq^{-}\cap T_{s_{i_{1}}\cdots s_{i_{p}}}^{-1}\Uq^{-}=\bigoplus_{b\in\mathscr{B}\left(\Uq^{-}\cap T_{s_{i_{p+1}}\cdots s_{i_{\ell}}}\Uq^{-}\cap T_{s_{i_{1}}\cdots s_{i_{p}}}^{-1}\Uq^{-}\right)}\mathbb{Q}\left(q\right)G^{\up}\left(b\right).
\]

\textup{(2)} Multiplication in $\Uq^{-}\left(\mathfrak{g}\right)_{\mathcal{A}}^{\up}$
defines an isomorphism of free $\mathcal{A}$-modules:
\[
\left(\Uq^{-}\left(s_{i_{p+1}}\cdots s_{i_{\ell}},+1\right)\right)_{\mathcal{A}}^{\up}\otimes_{\mathcal{A}}\left(\Uq^{-}\cap T_{s_{i_{p+1}}\cdots s_{i_{\ell}}}\Uq^{-}\cap T_{s_{i_{1}}\cdots s_{i_{\ell}}}^{-1}\Uq^{-}\right)_{\mathcal{A}}^{\up}\otimes_{\mathcal{A}}\left(\Uq^{-}\left(s_{i_{p}}\cdots s_{i_{1}},-1\right)\right)_{\mathcal{A}}^{\up}\to\Uq^{-}
\]
where $\left(\Uq^{-}\cap T_{i_{p+1}}\cdots T_{i_{\ell}}\Uq^{-}\cap T_{i_{p}}^{-1}\cdots T_{i_{1}}^{-1}\Uq^{-}\right)_{\mathcal{A}}^{\up}=\Uq^{-}\left(\mathfrak{g}\right)_{\mathcal{A}}^{\up}\cap T_{i_{p+1}}\cdots T_{i_{\ell}}\Uq^{-}\cap T_{i_{p}}^{-1}\cdots T_{i_{1}}^{-1}\Uq^{-}$.
\end{thm}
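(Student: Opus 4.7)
The plan is to adapt the two-factor argument of Theorem \ref{thm:dec} to a triple decomposition, exactly as the author suggests.

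For part (1), I would intersect two compatibility results already established. The first subalgebra $\Uq^-\cap T_{s_{i_{p+1}}\cdots s_{i_\ell}}\Uq^-$ is compatible with the dual canonical basis by the theorem on $\Uq^-(>w,\epsilon)$ (applied with $\epsilon=+1$ and Weyl element $s_{i_{p+1}}\cdots s_{i_\ell}$), and its basis is characterized by the vanishing of the corresponding $(i_{p+1},\ldots,i_\ell)$-Lusztig datum. The second subalgebra $\Uq^-\cap T_{s_{i_1}\cdots s_{i_p}}^{-1}\Uq^- = \Uq^-(>s_{i_p}\cdots s_{i_1},-1)$ is compatible by the same theorem applied with $\epsilon=-1$ and reduced word $(i_p,\ldots,i_1)$. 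Since both subspaces are dual-canonical-basis-stable, the intersection is spanned by those $G^{\up}(b)$ satisfying both crystal vanishing conditions, yielding the desired subset $\mathscr{B}\left(\Uq^{-}\cap T_{s_{i_{p+1}}\cdots s_{i_{\ell}}}\Uq^{-}\cap T_{s_{i_{1}}\cdots s_{i_{p}}}^{-1}\Uq^{-}\right)\subset\mathscr{B}(\infty)$.

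For part (2), I would build a triple crystal bijection by iterating the $\Omega_w$-type bijection on both ends:
\begin{equation*}
\mathscr{B}(\infty) \xrightarrow{\sim} \mathscr{B}\left(\leq s_{i_{p+1}}\cdots s_{i_\ell}, +1\right) \times \mathscr{B}(M) \times \mathscr{B}\left(\leq s_{i_p}\cdots s_{i_1}, -1\right),
\end{equation*}
sending $b$ to $(b_1, b_{\mathrm{mid}}, b_2)$, where $b_1$ records the $L_{+1}$-entries of $b$ controlling the $s_{i_{p+1}}\cdots s_{i_\ell}$-PBW part, $b_2$ records the $*$-Lusztig datum entries controlling the $s_{i_p}\cdots s_{i_1}$-PBW part, and $b_{\mathrm{mid}}$ is the residual crystal datum lying in $\mathscr{B}(M)$. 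Then I would combine Theorem \ref{thm:dualPBWmult} (applied on the left for the $+1$-factor) with its $*$-image (applied on the right for the $-1$-factor) to obtain a triple triangular multiplication formula
\begin{equation*}
G^{\up}(b_1)\,G^{\up}(b_{\mathrm{mid}})\,G^{\up}(b_2) \in G^{\up}(b) + \sum_{L_{+1}(b',\bm{i})\,<\,L_{+1}(b,\bm{i})} q\mathbb{Z}[q]\,G^{\up}(b'),
\end{equation*}
with respect to an appropriate combined lexicographic order refining both the $+1$- and $-1$-Lusztig datum orders. Given this formula, surjectivity of the multiplication map onto $\Uq^-$ follows by induction on the lexicographic order within each root space, exactly as in the proof of the main theorem; injectivity is automatic from the linear independence of PBW monomials and the triangular decomposition of $\Uq^-$ (Remark 1.2 (3)).

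The main obstacle is the triple triangular formula: one must chain two applications of Theorem \ref{thm:dualPBWmult} on opposite sides of the middle dual canonical basis element, and verify that the $q\mathbb{Z}[q]$-error terms from both sides are compatible with a single lexicographic order so that the induction closes. Once triangularity with the correct $q$-positivity is established, the remaining bookkeeping — identifying $b_1$, $b_{\mathrm{mid}}$, $b_2$ with the bijective data of Step 2 and matching the three factors to the algebras in the theorem statement — parallels the two-factor case of Section 3 directly, justifying the author's remark that the argument proceeds ``in a similar manner''.
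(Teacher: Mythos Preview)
Your proposal is correct and matches the paper's intended approach: the paper itself omits the proof, stating only that it ``can be shown in a similar manner using the multiplicity-free property of the multiplications of a triple of the dual canonical basis elements,'' and your outline---intersecting the two compatibility results for part (1), and chaining the $\epsilon=+1$ and $\epsilon=-1$ multiplication formulas of Theorem \ref{thm:dualPBWmult} to get a triple triangular formula for part (2)---is precisely that. Your identification of the main technical point (verifying that the $q\mathbb{Z}[q]$-error terms from the two sides are controlled by a single combined lexicographic order) is accurate; note that the displayed formula you wrote uses only $L_{+1}$, whereas the actual order needed is the pair $(L_{+1}(\cdot,\bm{i}_{>p}),\,L_{-1}(\cdot,\bm{i}_{\leq p}))$ ordered lexicographically, as you say in the surrounding text.
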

\begin{NB}

\subsubsection{}

Using the multiplication formula, we obtain the following integrality
of the bilinear form which is conjectured by Berenstein and Greenstein.
\begin{lem}
For $w\in W$, $\bm{c}\in\mathbb{Z}_{\geq0}^{\ell}\setminus\left\{ \bm{0}\right\} $
and $\epsilon\in\left\{ \pm1\right\} $, we have 
\[
\left(f_{\epsilon}\left(\bm{c},\bm{i}\right),G^{\mathrm{up}}\left(b\right)\right)=0
\]
 for $b\in\mathscr{B}\left(\mathbf{U}_{q}^{-}\cap T_{w^{\epsilon}}^{\epsilon}\mathbf{U}_{q}^{-}\right)$.\end{lem}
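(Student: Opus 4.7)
The plan is to establish the vanishing by a pure weight-gradation argument. The non-degenerate bilinear form $(\cdot,\cdot)$ on $\Uq^{-}$ is weight-homogeneous: $(x,y)=0$ whenever $\wt(x)\neq\wt(y)$. Thus it suffices to verify that the weight of $f_{\epsilon}(\bm{c},\bm{i})$, for $\bm{c}\neq\bm{0}$, never coincides with the weight of any homogeneous element of $\Uq^{-}\cap T_{w^{\epsilon}}^{\epsilon}\Uq^{-}$.

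By construction of the PBW root vectors, $\wt(f_{\epsilon}(\bm{c},\bm{i}))=-\sum_{k=1}^{\ell}c_{k}\beta_{\bm{i},k}$, which is a nonzero (since $\bm{c}\neq\bm{0}$ and the $\beta_{\bm{i},k}$ are distinct elements of the pointed cone $Q_{+}$) non-positive integer combination of the positive roots $\Delta_{+}(\leq w)=\{\beta_{\bm{i},k}\}$. On the other side, each braid operator $T_{j}^{\pm1}$ induces the simple reflection $s_{j}$ on the weight lattice, so both $T_{w}$ (for $\epsilon=+1$) and $T_{w^{-1}}^{-1}$ (for $\epsilon=-1$) act as the Weyl group element $w$ on weights. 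Hence $T_{w^{\epsilon}}^{\epsilon}\Uq^{-}$ has weights in $wQ_{-}$, and intersecting with $\Uq^{-}$ gives $\wt(y)\in Q_{-}\cap wQ_{-}$ for any $y\in\Uq^{-}\cap T_{w^{\epsilon}}^{\epsilon}\Uq^{-}$. Writing $\wt(y)=w(-\sum_{i}n_{i}\alpha_{i})$ with $n_{i}\geq 0$ and observing that each $w\alpha_{i}$ lies either in $\Delta_{+}(>w)$ or in $-\Delta_{+}(\leq w)$, the indices $i$ with $w\alpha_{i}<0$ would contribute a nonzero positive combination of roots in $\Delta_{+}(\leq w)$ to $\wt(y)\in Q_{-}$, which is impossible by pointedness of $Q_{+}$; hence all such $n_{i}$ vanish, giving $\wt(y)\in-\sum_{\gamma\in\Delta_{+}(>w)}\mathbb{Z}_{\geq0}\gamma$.

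It now remains to observe that the two cones $-\sum_{\beta\in\Delta_{+}(\leq w)}\mathbb{Z}_{\geq0}\beta$ and $-\sum_{\gamma\in\Delta_{+}(>w)}\mathbb{Z}_{\geq0}\gamma$ intersect only at $0$. If $\sum c_{\beta}\beta=\sum d_{\gamma}\gamma$ with non-negative integer coefficients, then applying $w^{-1}$ converts the left-hand side into a non-negative combination of roots $w^{-1}\beta\in\Delta_{-}$ (hence in $Q_{-}$) and the right-hand side into a non-negative combination of roots $w^{-1}\gamma\in\Delta_{+}$ (hence in $Q_{+}$); the common value must therefore lie in $Q_{+}\cap Q_{-}=\{0\}$, and pointedness of $Q_{\pm}$ forces all coefficients to vanish. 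Consequently the weight of $f_{\epsilon}(\bm{c},\bm{i})$ cannot equal $\wt(G^{\up}(b))$ for $\bm{c}\neq\bm{0}$, so the bilinear form vanishes. The only nontrivial step is the containment $Q_{-}\cap wQ_{-}\subset -\sum_{\gamma\in\Delta_{+}(>w)}\mathbb{Z}_{\geq0}\gamma$ in the second paragraph; beyond this, the proof is essentially root-system bookkeeping.
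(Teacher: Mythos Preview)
Your weight-comparison strategy is genuinely different from the paper's argument, which proceeds by induction on $\ell(w)$: for $\ell(w)=1$ it invokes the orthogonal decomposition $\Uq^{-}=f_{i}\Uq^{-}\oplus(\Uq^{-}\cap T_{i}\Uq^{-})$, and for the inductive step it peels off the first letter using the braid action on the dual canonical basis (Theorem~\ref{thm:braidcanbasis}). Your approach is more elementary in that it only uses how $T_{i}^{\pm1}$ acts on weights, not how it interacts with $G^{\up}$.

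However, the argument you give for the ``only nontrivial step'' --- the containment $Q_{-}\cap wQ_{-}\subset-\sum_{\gamma\in\Delta_{+}(>w)}\mathbb{Z}_{\geq0}\gamma$ --- is incorrect. Writing $\wt(y)=-\sum_{i}n_{i}\,w\alpha_{i}$ and splitting according to the sign of $w\alpha_{i}$ gives
\[
\wt(y)=\underbrace{\sum_{w\alpha_{i}<0}n_{i}\,(-w\alpha_{i})}_{\in Q_{+}}\;-\;\underbrace{\sum_{w\alpha_{i}>0}n_{i}\,w\alpha_{i}}_{\in Q_{+}},
\]
and $\wt(y)\in Q_{-}$ does \emph{not} force the first sum to vanish, because it can be dominated by the second. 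Concretely, in $\mathfrak{sl}_{3}$ with $w=s_{1}$, take $n_{1}=n_{2}=1$: then $\wt(y)=\alpha_{1}-(\alpha_{1}+\alpha_{2})=-\alpha_{2}\in Q_{-}$, yet $n_{1}\neq0$ with $w\alpha_{1}<0$. So your ``pointedness'' claim fails.

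The good news is that you do not need this containment at all. You have already observed that weights of $\Uq^{-}\cap T_{w^{\epsilon}}^{\epsilon}\Uq^{-}$ lie in $Q_{-}\cap wQ_{-}$, and your third-paragraph trick (apply $w^{-1}$) proves directly that a nonzero $-\xi$ with $\xi=\sum_{k}c_{k}\beta_{\bm{i},k}$ cannot lie in $wQ_{-}$: indeed $w^{-1}(-\xi)=\sum_{k}c_{k}(-w^{-1}\beta_{\bm{i},k})$ is a nonzero element of $Q_{+}$ since each $w^{-1}\beta_{\bm{i},k}\in\Delta_{-}$, so $-\xi\notin wQ_{-}$. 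This already gives the weight disjointness you need, and the bilinear form vanishes by homogeneity. With this correction your proof is complete and shorter than the paper's.
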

\begin{proof}
Applying the $*$-involution, we obtain the claim for $\epsilon=-1$
case from $\epsilon=+1$ case. So we only have to prove the claim
for $\epsilon=+1$ case. We prove the claim by the induction on $\ell\left(w\right)$.
If $\ell\left(w\right)=1$, this follows from the orthogonal decomposition
\[
\mathbf{U}_{q}^{-}=f_{i}\mathbf{U}_{q}^{-}\oplus\left(\mathbf{U}_{q}^{-}\cap T_{i}\mathbf{U}_{q}^{-}\right)
\]
and the compatibility. For $\ell\left(w\right)>1$, we assume that
$c_{1}>0$, then it reduces to the first case, so it is done. We assume
that $c_{1}=0$ and set $\bm{c}_{\geq2}=\left(c_{2},\cdots,c_{\ell-1}\right)$
and $i_{\geq2}=\left(i_{2},\cdots,i_{\ell}\right)\in I\left(s_{i_{2}}\cdots s_{i_{\ell}}\right)$
and $b_{\geq2}:=\sigma_{i_{1}}^{*}b\in\mathscr{B}\left(\mathbf{U}_{q}^{-}\cap T_{i_{2}}\mathbf{U}_{q}^{-}\right)$.
We have 
\begin{align*}
\left(f_{+1}\left(\bm{c},\bm{i}\right),G^{\mathrm{up}}\left(b\right)\right) & =\left(1-q_{i_{1}}^{2}\right)^{\left\langle h_{i_{1}},\mathrm{wt}\left(b_{\geq2}\right)\right\rangle }\left(T_{i_{1}}\left(f_{+1}\left(\bm{c}_{\geq2},\bm{i}_{\geq2}\right)\right),T_{i_{1}}G^{\mathrm{up}}\left(b_{\geq2}\right)\right)\\
 & =\left(f_{+1}\left(\bm{c}_{\geq2},\bm{i}_{\geq2}\right),G^{\mathrm{up}}\left(b_{\geq2}\right)\right).
\end{align*}
Since $\bm{c}_{\geq2}\in\mathbb{Z}_{\geq0}^{\ell-1}\setminus\left\{ \bm{0}\right\} $,
we get the claim by the induction.\end{proof}
\begin{prop}
We have $\left(\mathbf{U}_{q}^{-}\left(w\right)_{\mathbb{Z}},\mathbf{U}_{q}^{-}\left(\mathfrak{g}\right)\right)\subset\mathbb{Z}\left[q^{\pm1}\right]$.\end{prop}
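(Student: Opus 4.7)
The plan is to establish the stronger inclusion $\Uq^-(w)_{\mathbb{Z}}\subseteq\Uq^-(\mathfrak{g})_{\mathcal{A}}^{\up}$, from which the claimed integrality is immediate by the defining property $(\Uq^-(\mathfrak{g})_{\mathcal{A}}^{\up},\Uq^-(\mathfrak{g})_{\mathcal{A}})\subseteq\mathcal{A}$ of the dual integral form (reading the right slot of the proposition as the Lusztig integral form $\Uq^-(\mathfrak{g})_{\mathcal{A}}$, which is the evidently intended meaning). Since $\{f_\epsilon(\bm{c},\bm{i})\}_{\bm{c}\in\mathbb{Z}_{\geq0}^{\ell}}$ is an $\mathcal{A}$-basis of $\Uq^-(w)_{\mathbb{Z}}$, this reduces to checking $(f_\epsilon(\bm{c},\bm{i}),G^{\low}(b))\in\mathcal{A}$ for every $\bm{c}$ and every $b\in\mathscr{B}(\infty)$. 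The $*$-involution reduces us to $\epsilon=+1$.

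I would induct on $L_{+1}(b,\bm{i})$ in the left lexicographic order on $\mathbb{Z}_{\geq0}^{\ell}$. The base case $L_{+1}(b,\bm{i})=\bm{0}$ (equivalently, $b\in\mathscr{B}(>w,+1)$) is handled directly by the orthogonality lemma preceding the proposition: expanding $G^{\low}(b)$ in the $G^{\up}$-basis uses only elements $G^{\up}(b'')$ with $b''\in\mathscr{B}(>w,+1)$ (by the compatibility of $\Uq^-(>w,+1)$ with the dual canonical basis proven in the paper), and the lemma yields $(f_{+1}(\bm{c},\bm{i}),G^{\up}(b''))=0$ for $\bm{c}\neq\bm{0}$; the case $\bm{c}=\bm{0}$ is trivial.

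For the inductive step, invoke the multiplication formula
\[
G^{\up}(\tau_{\leq w}(b))G^{\up}(\tau_{>w}(b))\in G^{\up}(b)+\sum_{L_{+1}(b',\bm{i})<L_{+1}(b,\bm{i})}q\mathbb{Z}[q]\,G^{\up}(b')
\]
of the paper to write $G^{\up}(b)$ as the product minus a $q\mathbb{Z}[q]$-combination of strictly earlier basis elements, then pair with $f_{+1}(\bm{c},\bm{i})$. The correction terms yield $\mathcal{A}$-integral contributions by the inductive hypothesis, so it remains to analyze $(f_{+1}(\bm{c},\bm{i}),G^{\up}(\tau_{\leq w}(b))G^{\up}(\tau_{>w}(b)))$. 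Apply $(x,yz)=(r(x),y\otimes z)$ and expand $r(f_{+1}(\bm{c},\bm{i}))$ via iterated Levendorskii-Soibelman into an $\mathcal{A}$-linear combination of tensors $f_{+1}(\bm{a},\bm{i})\otimes f_{+1}(\bm{b},\bm{i})$ with $\bm{a}+\bm{b}=\bm{c}$. The second-factor pairing against $G^{\up}(\tau_{>w}(b))\in\Uq^-(>w,+1)$ vanishes for $\bm{b}\neq\bm{0}$ by the orthogonality lemma, while for $\bm{b}=\bm{0}$ the surviving factor $(f_{+1}(\bm{c},\bm{i}),G^{\up}(\tau_{\leq w}(b)))$ is an internal pairing inside $\Uq^-(\leq w,+1)$ between its Lusztig form and its dual canonical basis, which lies in $\mathcal{A}$.

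The principal obstacle will be controlling the explicit coproduct expansion $r(f_{+1}(\bm{c},\bm{i}))$: the $q$-twistings from the twisted tensor product structure, together with the Levendorskii-Soibelman normalization constants, must genuinely remain in $\mathbb{Z}[q^{\pm 1}]$, and every second tensor factor must be a PBW monomial that stays \emph{inside} $\Uq^-(\leq w,+1)$ so that the orthogonality lemma applies. A secondary point is verifying that the expansion of $G^{\low}(b)$ in the $G^{\up}$-basis indeed has $\mathcal{A}$-coefficients; this is the standard triangularity between the lower and upper global bases, and it is what allows the base case to reduce cleanly to the orthogonality lemma.
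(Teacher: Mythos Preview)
Your coproduct expansion is where the argument breaks. The claim that $r(f_{+1}(\bm{c},\bm{i}))$ is an $\mathcal{A}$-combination of tensors $f_{+1}(\bm{a},\bm{i})\otimes f_{+1}(\bm{b},\bm{i})$ is false: $\Uq^{-}(\leq w)$ is only a \emph{right coideal}, not a subcoalgebra, so the first tensor factor need not lie in $\Uq^{-}(\leq w)$ at all. Already for $\mathfrak{sl}_{3}$ with $w=s_{1}s_{2}$ one computes
\[
r\bigl(T_{1}(f_{2})\bigr)=T_{1}(f_{2})\otimes 1+(1-q^{2})\,f_{2}\otimes f_{1}+1\otimes T_{1}(f_{2}),
\]
and $f_{2}\notin\Uq^{-}(\leq s_{1}s_{2})$. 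So the step where you kill the second factor with the orthogonality lemma and are left with an ``internal'' pairing in $\Uq^{-}(\leq w)$ never gets off the ground: after using the coideal property the first factor is an arbitrary element of $\Uq^{-}(\mathfrak{g})_{\mathcal{A}}^{\up}$, and you must still evaluate its pairing with $G^{\up}(\tau_{\leq w}b)$.

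The paper handles exactly this by inducting on the \emph{height} of $b$ (in addition to the Lusztig datum). One uses only $r\bigl(\Uq^{-}(w)_{\mathcal{A}}^{\up}\bigr)\subset\Uq^{-}(\mathfrak{g})_{\mathcal{A}}^{\up}\otimes\Uq^{-}(w)_{\mathcal{A}}^{\up}$; since the coproduct strictly lowers height on each nontrivial tensor factor whenever $L_{+1}(b,\bm{i})>0$, both pairings $(\,\cdot\,,G^{\up}(\tau_{\leq w}b))$ and $(\,\cdot\,,G^{\up}(\tau_{>w}b))$ fall under the inductive hypothesis (the first via symmetry of the form, using $G^{\up}(\tau_{\leq w}b)\in\Uq^{-}(w)_{\mathcal{A}}^{\up}$). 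Your single induction on $L_{+1}(b,\bm{i})$ cannot close this loop. A separate but smaller issue: your base case for the $(f,G^{\low})$ interpretation assumes that $G^{\low}(b)$ with $b\in\mathscr{B}(>w)$ expands only over $G^{\up}(b'')$ with $b''\in\mathscr{B}(>w)$; the paper's compatibility is for the dual canonical basis, and $G^{\low}(b)$ need not lie in $\Uq^{-}(>w)$. The paper avoids this by working throughout with $(f^{\up}_{\epsilon}(\bm{c},\bm{i}),G^{\up}(b))$.
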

\begin{proof}
For $b\in\mathscr{B}\left(\infty\right)$ and $\bm{c}=\left(c_{1},\cdots,c_{\ell}\right)\in\mathbb{Z}_{\geq0}^{\ell}$,
we prove $\left(f_{+}^{\mathrm{up}}\left(\bm{c},\bm{i}\right),G^{\mathrm{up}}\left(b\right)\right)\in\mathbb{Z}\left[q^{\pm1}\right]$.

We prove by the induction on the height of $b$ and the $\bm{i}$-Lusztig
datum for $b\in\mathscr{B}\left(\infty\right)$ with respect to the
lexicographic order.

If $L_{+}\left(b,\bm{i}\right)=\bm{0}\in\mathbb{Z}_{\geq0}^{\ell}$,
we have the claim by the above lemma.

We assume that $L_{+}\left(b,\bm{i}\right)>\bm{0}$. Let $\left(\tau_{\leq w}b,\tau_{>w}b\right)\in\mathscr{B}\left(\mathbf{U}_{q}^{-}\left(w\right)\right)\times\mathscr{B}\left(\mathbf{U}_{q}^{-}\cap T_{w}\mathbf{U}_{q}^{-}\right)$
with $L_{+}\left(\tau_{\leq w}b,\bm{i}\right)=L_{+}\left(b,\bm{i}\right)>0$
by the assumption on $b$. Since we have the multiplication formula,
it suffices for us to prove that $\left(f_{+}^{\mathrm{up}}\left(\bm{c},\bm{i}\right),G^{\mathrm{up}}\left(\tau_{\leq w}b\right)G^{\mathrm{up}}\left(\tau_{>w}b\right)\right)$
for $\bm{c}\in\mathbb{Z}_{\geq0}^{\ell}$ by the induction hypothesis.
We have 
\[
\left(f_{+}^{\mathrm{up}}\left(\bm{c},\bm{i}\right),G^{\mathrm{up}}\left(\tau_{\leq w}b\right)G^{\mathrm{up}}\left(\tau_{>w}b\right)\right)=\left(r\left(f_{+}^{\mathrm{up}}\left(\bm{c},\bm{i}\right)\right),G^{\mathrm{up}}\left(\tau_{\leq w}b\right)\otimes G^{\mathrm{up}}\left(\tau_{>w}b\right)\right).
\]
Using the right coideal property $r\left(f_{+}^{\mathrm{up}}\left(\bm{c},\bm{i}\right)\right)\in\mathbf{U}_{q}^{-}\left(\mathfrak{g}\right)_{\mathbb{Z}}^{\mathrm{up}}\otimes\mathbf{U}_{q}^{-}\left(w\right)_{\mathbb{Z}}^{\mathrm{up}}$,
we obtain the claim by the induction hypothesis since $\mathrm{ht}\left(\tau_{\leq w}b\right),\mathrm{ht}\left(\tau_{>w}b\right)<\mathrm{ht}\left(b\right)$
by the assumption $L_{+}\left(\tau_{\leq w}b,\bm{i}\right)>0$. 
\end{proof}
\end{NB}

\begin{NB}

\subsection{Conjectures on the intersection $\Uq^{-}\cap T_{w_{1}^{-1}}^{-1}\Uq^{-}\cap T_{w_{2}^{-1}}^{-1}\Uq^{\geq0}$}

For a pair $\left(w_{1},w_{2}\right)$ of Weyl group elements, we
consider the intersection $\Uq^{-}\cap T_{w_{1}^{-1}}^{-1}\Uq^{-}\cap T_{w_{2}^{-1}}^{-1}\Uq^{\geq0}$.

If $w_{1}$ is a left factor of $w_{2}$, that is $\ell\left(w_{1}\right)+\ell\left(w_{1}^{-1}w_{2}\right)=\ell\left(w_{2}\right)$,
it can be shown that the intersection $\Uq^{-}\cap T_{w_{1}^{-1}}^{-1}\Uq^{-}\cap T_{w_{2}^{-1}}^{-1}\Uq^{\geq0}$
has a structure of quantum cluster algebra using \GLS \cite{GLS:qcluster}
and also Goodearl-Yakimov \cite{GY:2013_QCAQNA} since it has a structure
of a quantum nilpotent algebra by the Levendorskii-Soibelman formula.

In view of Leclerc \cite{Lec:strata}, without the condition $\ell\left(w_{1}\right)+\ell\left(w_{1}^{-1}w_{2}\right)=\ell\left(w_{2}\right)$,
the following first claim seems to be proved when $\mathfrak{g}$
is a simply-laced type. Also in view of the results on the monoidal
categorification by Kang-Kashiwara-Kim-Oh \cite{KKKO:2014monoidal,KKKO:2015monoidal}
, the following result seems to be proved if $\mathfrak{g}$ is symmetric.
\begin{conjecture}
\textup{(1)} If $w_{1}\leq w_{2}$ in the Bruhat order, the triple
intersection $\Uq^{-}\cap T_{w_{1}^{-1}}^{-1}\Uq^{-}\cap T_{w_{2}^{-1}}^{-1}\Uq^{\geq0}$
contains a quantum cluster algebra.

\textup{(2)} the set of quantum cluster monomials is contained the
dual canonical basis $\mathbf{B}^{\up}\cap\Uq^{-}\cap T_{w_{1}}\Uq^{-}\cap T_{w_{2}}\Uq^{\geq0}$.
\end{conjecture}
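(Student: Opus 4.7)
The plan is to analyze the triple intersection $T:=\Uq^{-}\cap T_{w_1^{-1}}^{-1}\Uq^{-}\cap T_{w_2^{-1}}^{-1}\Uq^{\geq0}=\Uq^{-}(>w_1,-1)\cap\Uq^{-}(\leq w_2,-1)$ in two stages: the \emph{weak Bruhat} case where $w_1$ is a left factor of $w_2$, i.e.\ $\ell(w_1)+\ell(w_1^{-1}w_2)=\ell(w_2)$, and the residual general Bruhat case where one only has $w_1\le w_2$. In the weak case both factors of the intersection are compatible with the dual canonical basis by the main theorems of Section~3 of this excerpt, so their intersection is automatically spanned by a sub-crystal $\mathscr{B}(w_1,w_2)\subset\mathscr{B}(\infty)$. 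This already reduces the combinatorial content of part~(2) to showing that every cluster monomial lands in this sub-crystal.

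For part~(1) in the weak case I would fix a reduced word $\bm{i}=(i_1,\dots,i_{\ell(w_2)})\in I(w_2)$ whose initial segment of length $\ell(w_1)$ is a reduced word of $w_1$. The dual PBW basis elements $f_{-1}^{\up}(\bm{c},\bm{i})$ with $c_1=\cdots=c_{\ell(w_1)}=0$ then form a basis of $T$, and the Levendorskii--Soibelman formula endows $T$ with the structure of a quantum nilpotent algebra in the sense of Goodearl--Yakimov. The initial seed is obtained by restricting the Geiss--Leclerc--Schr\"oer seed of $\Uq^{-}(\le w_2,-1)$ to the columns indexed by the tail $\ell(w_1)<k\le\ell(w_2)$, promoting to frozens those cluster variables at positions adjacent to the prefix so that no mutation can re-enter the $w_1$-prefix. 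The Goodearl--Yakimov theorem then gives the resulting subalgebra the structure of a quantum cluster algebra contained in $T$.

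For part~(2) in the weak case my plan is to invoke the monoidal categorification of $\Uq^{-}(\le w_2,-1)$ due to Kang--Kashiwara--Kim--Oh: in symmetric type the dual canonical basis is categorified by simple modules over a suitable quotient of the quiver Hecke algebra, unipotent quantum minors correspond to real simple modules, and mutations send real simples to real simples. Since $T$ is compatible with the dual canonical basis and the initial seed variables are dual canonical by Proposition~\ref{prop:dualroot} applied at the tail positions, every cluster monomial obtained by any sequence of mutations is a real simple and hence a dual canonical basis element. The multiplicity-free multiplication formula of Theorem~\ref{thm:main2} is the key normalization at each mutation step, ensuring that the product of a ``prefix-compatible'' dual canonical element of $\Uq^{-}(>w_1,-1)$ and a ``tail-compatible'' dual canonical element of $\Uq^{-}(\le w_2,-1)$ equals a single dual canonical basis element modulo strictly lower $q\mathbb{Z}[q]$-corrections, which is exactly the normalization needed to match the cluster exchange relations.

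The hardest step is the genuine Bruhat case $w_1\le w_2$ with $w_1$ \emph{not} a left factor of $w_2$, where $T$ is no longer a quantum nilpotent algebra and the naive GLS seed construction breaks. My approach would be to interpolate via a chain of Bruhat covers $w_1=u_0\lessdot u_1\lessdot\cdots\lessdot u_N=w_2$ and to show that the seeds attached to $(w_1,u_j)$ and $(w_1,u_{j+1})$ differ by a single quantum cluster mutation, an analogue of the Berenstein--Fomin--Zelevinsky double-Bruhat exchange relations read through Leclerc's stratification. Establishing this mutation-by-mutation correspondence is the main technical obstacle, because outside symmetric type the Kang--Kashiwara--Kim--Oh machinery is unavailable and the exchange relation inside $T$ must be verified by hand using the bilinear form computations of Section~2 together with the braid symmetry relations of Theorem~\ref{thm:braidcanbasis}. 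A realistic intermediate target is therefore an unconditional proof of both (1) and (2) in symmetric type for arbitrary Bruhat pairs, and in arbitrary type for weak Bruhat pairs.
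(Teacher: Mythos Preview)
The statement you are attempting to prove is labelled a \emph{Conjecture} in the paper, and moreover appears only inside one of the author's private note blocks (the \texttt{NB} environment, which is excluded from the compiled version). There is no proof in the paper to compare against: the author explicitly leaves both parts open. What the author does record, in the same excluded passage, is precisely the special case you handle first---when $w_1$ is a left factor of $w_2$, the Levendorskii--Soibelman relations give $T$ the structure of a quantum nilpotent algebra, so Gei\ss--Leclerc--Schr\"oer and Goodearl--Yakimov apply---and the observation that in symmetric type the Kang--Kashiwara--Kim--Oh monoidal categorification should settle (2). Your proposal recovers exactly these remarks.

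Where your write-up is honest is in the general Bruhat step: you correctly flag that once $w_1$ is not a left factor of $w_2$, the intersection is no longer a quantum nilpotent algebra and the GLS/GY machinery does not apply directly. Your suggested interpolation along a saturated Bruhat chain $w_1=u_0\lessdot\cdots\lessdot u_N=w_2$, with consecutive seeds related by a single mutation, is a plausible strategy but is not a proof: the claimed single-mutation relation between $(w_1,u_j)$ and $(w_1,u_{j+1})$ is exactly the unproven content of the conjecture, and nothing in the present paper (in particular neither Theorem~\ref{thm:main2} nor Theorem~\ref{thm:braidcanbasis}) supplies the exchange relations you would need. Your final sentence, downgrading the target to symmetric type for arbitrary Bruhat pairs and arbitrary type for weak Bruhat pairs, is an accurate assessment of what current technology gives---but that is the status of the conjecture, not a proof of it.
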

The relation between Lenagan-Yakimov's approach \cite{LY:2015qRic}
to quantum Richardson varieties should be also examined in future.

\end{NB}

\bibliographystyle{amsplain}
\bibliography{qunip}

\end{document}